\documentclass[11pt,a4paper,english,reqno]{amsart}
\usepackage[a4paper,footskip=1.5em,bottom=38mm,margin=2.34cm]{geometry}
\usepackage{amsmath,amssymb,amsthm,mathtools}
\usepackage[mathscr]{euscript} \usepackage[usenames,dvipsnames]{color}
\usepackage{adjustbox,tikz,calc,graphics,babel,standalone}
\usepackage{subcaption} \usepackage{csquotes,enumerate,verbatim}
\usepackage[final]{microtype} \usepackage[numbers]{natbib}
\usepackage{soul}
\usepackage{appendix}
\usetikzlibrary{shapes.misc,calc,intersections,patterns,decorations.pathreplacing}
\usepackage{hyperref}
\hypersetup{colorlinks=true,linkcolor=blue,citecolor=blue,pdfpagemode=UseNone,pdfstartview={XYZ
null null 1.00}} 
\usepackage[none]{hyphenat}
\pagestyle{plain}
\linespread{1.07} 
\setlength{\parskip}{3pt}

\theoremstyle{plain}
\newtheorem*{theorem*}{Theorem}
\newtheorem{theorem}{Theorem}[section]

\newtheorem{lemma}[theorem]{Lemma}
\newtheorem{claim}[theorem]{Claim}

\newtheorem*{claim*}{Claim}
\newtheorem{corollary}[theorem]{Corollary}
\newtheorem{conjecture}[theorem]{Conjecture}

\theoremstyle{remark}

\DeclareMathOperator\ex{ex}

\let\emptyset\varnothing
\let\eps\varepsilon

\title{Uniform chain decompositions and applications}
\thanks{ETH Zurich, \emph{e-mail}: \textbf{\{benjamin.sudakov,istvan.tomon,zsolt.wagner\}@math.ethz.ch}. Research supported by SNSF grant 200021-149111.}
\author{
Benny Sudakov 
\and
Istv\'an Tomon 
\and
Adam Zsolt Wagner
}

\begin{document}

\maketitle

\begin{abstract}
    The Boolean lattice $2^{[n]}$ is the family of all subsets of $[n]=\{1,\dots,n\}$ ordered by inclusion, and a chain is a family of pairwise comparable elements of $2^{[n]}$. Let $s=2^{n}/\binom{n}{\lfloor n/2\rfloor}$, which is the average size of a chain in a minimal chain decomposition of $2^{[n]}$. We prove that $2^{[n]}$ can be partitioned into $\binom{n}{\lfloor n/2\rfloor}$ chains such that all but at most $o(1)$ proportion of the chains have size $s(1+o(1))$. This asymptotically proves a conjecture of F\"uredi from 1985. Our proof is based on probabilistic arguments. To analyze our random partition
    we develop a weighted variant of the graph container method. 
     
    Using this result, we also answer a Kalai-type question raised recently by Das, Lamaison and Tran. What is the minimum number of forbidden comparable pairs forcing that the largest subfamily of $2^{[n]}$ not containing any of them has size at most $\binom{n}{\lfloor n/2\rfloor}$? We show that the answer is $(\sqrt{\frac{\pi}{8}}+o(1))2^{n}\sqrt{n}$. 
     
     Finally, we discuss how these uniform chain decompositions can be used to optimize and simplify various results in extremal set theory.
\end{abstract}

\section{Introduction}

The \emph{Boolean lattice $2^{[n]}$} is the family of all subsets of $[n]=\{1,\dots,n\}$, ordered by inclusion. A \emph{chain} in $2^{[n]}$ is a family $\{x_{1},\dots,x_{k}\}\subset 2^{[n]}$ such that $x_{1}\subset \dots\subset x_{k}$, and an \emph{antichain} is a family $A\subset 2^{[n]}$ such that no two elements of $A$ are comparable. 

A cornerstone result in extremal set theory is the theorem of Sperner~\cite{S28} which states that the size of the largest antichain in $2^{[n]}$ is $\binom{n}{\lfloor n/2\rfloor}$, which by Dilworth's theorem~\cite{D50} is equivalent to the statement that the minimum number of chains $2^{[n]}$ can be partitioned into is also $\binom{n}{\lfloor n/2\rfloor}$. While the maximum sized antichain is more or less unique (if $n$ is odd, there are two maximal antichains, otherwise it is unique), there are many different ways to partition $2^{[n]}$ into the minimum number of chains. In general, chain decompositions of the Boolean lattice into the minimum number of chains are extensively studied, see e.g. \cite{BTK51,DJMS19,F85,GK76,G88,HLST02,HLST03, T15, T16}.

One minimal chain decomposition of particular interest is the so-called \emph{symmetric chain decomposition}. A chain with elements $x_{0}\subset\dots\subset x_{k}$ is \emph{symmetric} in $2^{[n]}$, if $|x_{i}|=\frac{n-k}{2}+i$ for $i=0,\dots,k$. It was proved by de Brujin, Tengbergen and Kruyswijk~\cite{BTK51} that the Boolean lattice can be partitioned into symmetric chains. Note that in such a chain decomposition, there are exactly $\binom{n}{k}-\binom{n}{k-1}$ chains of size $n-2k+1$ for $k=0,\dots,\lfloor n/2\rfloor$. Therefore, in a symmetric chain decomposition the sizes of the chains are distributed very non-uniformly, in fact, it is the most non-uniform chain decomposition in a certain sense, see the discussion in Section \ref{sect:remarks}. Perhaps motivated by this observation, F\"uredi~\cite{F85} asked whether there exists a chain decomposition of $2^{[n]}$ into the minimum number of chains such that any two chains have roughly the same size.

\begin{conjecture}[F\"{u}redi~\cite{F85}]\label{conj:mainconj}
Let $n$ be a positive integer and let $s=2^{n}/\binom{n}{\lfloor n/2\rfloor}.$ Then $2^{[n]}$ can be partitioned into $\binom{n}{\lfloor n/2\rfloor}$ chains such that the size of each chain is either $\lfloor s\rfloor$ or $\lceil s\rceil$.
\end{conjecture}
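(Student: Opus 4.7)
The plan is to split the task into two phases: first, produce a chain decomposition in which every chain already has size very close to $s$; second, correct the remaining discrepancies by exchanging elements between chains to pin every chain size to exactly $\lfloor s \rfloor$ or $\lceil s \rceil$. Let $N = \binom{n}{\lfloor n/2\rfloor}$, $s = 2^n/N$, $q = \lfloor s \rfloor$, and $r = 2^n - qN$, so the target decomposition has exactly $r$ chains of size $q+1$ and $N-r$ chains of size $q$.

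\textbf{Phase 1 (near-uniform decomposition with reservoir).} I would first sharpen the probabilistic construction underlying the paper's main asymptotic theorem from a first-moment statement (almost all chains have size $(1+o(1))s$) to a strong concentration statement: with positive probability every chain has size in a narrow window $[q - L, q + 1 + L]$ for some $L = L(n) \ll s$, and the number of chains of size outside $\{q, q+1\}$ is at most some function $f(n)$ that Phase 2 can correct. The intended mechanism is to refine the weighted graph container estimates mentioned in the abstract so that they control not only the expectation but also exponential tails of the chain-length distribution, coupled with a union bound over the $N$ chains. Simultaneously, I would carve out a small \emph{reservoir} $R \subset 2^{[n]}$, consisting of sets near the middle levels, chosen so that $R$ can be partitioned into chains with a highly flexible family of length-profiles. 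The near-uniform decomposition is then produced on $2^{[n]} \setminus R$, leaving $R$ as a buffer of adjustable pieces.

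\textbf{Phase 2 (surgery to exact sizes).} To push every chain size into $\{q, q+1\}$, I would apply a sequence of local rearrangements driven by a \emph{chain-surgery lemma}: whenever two chains $C$ and $C'$ of the current decomposition satisfy $|C| - |C'| \ge 2$, one can transfer one element from $C$ to $C'$, possibly re-routing through the reservoir $R$, so that the resulting two chains have sizes differing by at most one. Iterating reduces the number of chains outside $\{q, q+1\}$ and, after a final tuning step that adjusts the decomposition of $R$ to absorb any residual parity, yields exactly $r$ chains of size $q+1$ and $N-r$ chains of size $q$. The surgery lemma would be proved by exhibiting, for any set $x \in C$ and prescribed insertion site in $C'$, a short path in the Boolean lattice that connects them through intermediate-sized sets belonging to $C$, $C'$, and $R$; existence of such paths would rely on the normalized matching property of $2^{[n]}$ and the abundance of covers between adjacent levels.

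\textbf{Main obstacle.} The crux is making the surgery lemma actually work and converge. A single swap preserves the chain structure only when the transferred element is comparable with both chains in the required way, which is a stringent constraint; further, a swap that corrects one imbalance can create a worse one elsewhere, so convergence requires a carefully designed global potential function (for instance, $\sum_i (|C_i| - q)(|C_i| - q - 1)$, which vanishes precisely on the target profile). Ensuring monotone decrease of this potential in turn demands that the reservoir $R$ be rich enough to provide a valid swap partner for every infeasible configuration, while being small enough that its own chain decomposition can be prescribed exactly. I anticipate that balancing these two pressures -- richness versus smallness of $R$ -- together with nailing the final parity count of $q+1$-chains to exactly $r$, will be the technically hardest portion of the argument, and is the step that separates the asymptotic theorem in the abstract from the exact Conjecture \ref{conj:mainconj}.
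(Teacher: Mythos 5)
The statement you are attempting to prove is Conjecture~\ref{conj:mainconj}, which is \emph{open}; the paper proves only the asymptotic Theorem~\ref{thm:mainthm}, in which all but an $n^{-1/8+o(1)}$ proportion of the chains have size $s(1+O(n^{-1/16}))$, and explicitly states that no attempt was made to optimize error terms. There is therefore no proof in the paper for your proposal to be measured against, and the proposal must stand on its own.

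Taken on its own merits, the proposal has two unestablished steps, and you correctly flag the more serious one. Phase~1 already overclaims: the container and Chernoff estimates in the paper bound the \emph{number} of chains deviating from $s$, but nothing in that machinery forces \emph{every} chain into a window $[q-L,\,q+1+L]$ with $L\ll s$. The construction unavoidably produces a small but nonzero collection of chains whose lengths are far from $s$ (e.g.\ the chains that absorb the leftover set $\mathcal{L}$ built from the extreme levels $[n]^{(\geq m+C_0)}$ and the shattered pieces), and controlling the number of such chains is fundamentally different from eliminating them. Phase~2 is the genuine gap: the ``chain-surgery lemma'' is not a lemma but a wish. Transferring an element $x\in C$ into $C'$ requires that the \emph{particular set} $x$ be comparable with all of $C'$ and insertable at the correct level, a condition that fails generically; ``re-routing through the reservoir'' names the difficulty rather than resolving it. You also do not specify how the reservoir $R$ is chosen, why it can be decomposed into chains with exactly the prescribed flexible length-profiles (this is itself a hard instance of the problem you are trying to solve), or why the potential $\sum_i (|C_i|-q)(|C_i|-q-1)$ decreases monotonically under the proposed moves rather than being pushed around. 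As written this is a plausible outline for attacking an open conjecture, not a proof, and you acknowledge as much in your final paragraph.
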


Here, we have $s=(\sqrt{\frac{\pi}{2}}+o(1))\sqrt{n}\approx 1.25\sqrt{n}$. Hsu, Logan, Shahriari and Towse~\cite{HLST02,HLST03} proved the existence of a chain decomposition into the minimum number of chains such that the size of each chain is between $\frac{1}{2}\sqrt{n}+O(1)$ and $O(\sqrt{n\log n})$. The second author of this paper~\cite{T15,T16} improved the lower and upper bound to $0.8\sqrt{n}$ and $26\sqrt{n}$, respectively, and proved certain generalizations of his result to other partially ordered sets. The main result of our paper is that Conjecture~\ref{conj:mainconj} holds asymptotically.

\begin{theorem}\label{thm:mainthm}
Let $n$ be a positive integer and $s=2^{n}/\binom{n}{\lfloor n/2\rfloor}$. The Boolean lattice can be partitioned into $\binom{n}{\lfloor n/2\rfloor}$ chains such that all but at most $n^{-\frac{1}{8}+o(1)}$ proportion of the chains have size ${s(1+O(n^{-\frac{1}{16}}))}$.
\end{theorem}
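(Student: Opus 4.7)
The plan is probabilistic: I would construct a random chain decomposition indexed by the middle layer and argue that almost every chain has length close to $s$. For each middle-layer element $M\in\binom{[n]}{\lfloor n/2\rfloor}$, I would independently sample a random maximal chain $\C{C}_M$ through $M$ by performing two independent walks starting at $M$: a downward walk to $\emptyset$ which at each step removes a uniformly random element of the current set, and an upward walk to $[n]$ which adds a uniformly random non-element. A direct LYM-style calculation shows that a fixed element $x$ at level $k$, comparable to $M$, lies on $\C{C}_M$ with the appropriate probability, so in expectation each $x$ is covered by $\binom{n}{\lfloor n/2\rfloor}/\binom{n}{k}$ of the chains: this is $\Theta(1)$ in the window $|k-n/2|=O(\sqrt{n})$ and grows rapidly outside it, matching the heuristic that chain lengths should be of order $s=\Theta(\sqrt n)$.

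To turn the random collection $\{\C{C}_M\}$ into an actual partition, I would assign each non-middle element $x$ to a uniformly random chain among those $\C{C}_M$ containing it, and then let $\widehat{\C{C}}_M$ be the maximal sub-chain of $\C{C}_M$ through $M$ whose elements are all assigned to $M$; this truncation at the first conflict keeps each $\widehat{\C{C}}_M$ a genuine chain. By exchangeability the expected length of $\widehat{\C{C}}_M$ is essentially $s$, and truncation naturally kicks in where conflicts become generic, at distance $\Theta(\sqrt n)$ from the middle layer. The real substance of the proof is concentration: I must show $|\widehat{\C{C}}_M|=s(1+O(n^{-1/16}))$ for all but an $n^{-1/8+o(1)}$ fraction of $M$, and this is the main obstacle. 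The lengths are strongly coupled, because whether $\widehat{\C{C}}_M$ survives past a given level depends on whether \emph{other} already-truncated chains happen to reach the same element, which in turn depends on how far they themselves have been truncated, and so on.

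To handle this coupling I would set up a weighted auxiliary graph whose vertices are (chain, level) pairs and whose edges encode potential collisions, with weights calibrated to the drastically different collision rates at different levels (elements near the middle are hit by $\Theta(1)$ chains, elements further out by a combinatorially large number). A container-type argument on this weighted graph should then associate a compact ``fingerprint'' to each atypical truncation pattern, and summing weighted probabilities over containers, combined with a union bound over $M$, should deliver the required quantitative bound. Finally, a local post-processing step absorbs the $n^{-1/8+o(1)}$ fraction of badly-sized chains and any remaining unassigned elements into an $o(1)$ proportion of exceptional chains via local swaps, yielding a decomposition of $2^{[n]}$ into $\binom{n}{\lfloor n/2\rfloor}$ chains with the claimed length profile.
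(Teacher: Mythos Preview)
Your scheme has a basic coverage gap that no post-processing can repair. Consider an element $x$ at level $m+l$ with $l>0$, where $m=\lfloor n/2\rfloor$. There are $\binom{m+l}{m}$ middle elements $M\subset x$, and for each one the upward walk from $M$ hits $x$ with probability $1/\binom{n-m}{l}$; hence the number of chains $\mathcal{C}_M$ through $x$ is $\mathrm{Bin}\bigl(\binom{m+l}{m},\,1/\binom{n-m}{l}\bigr)$, with mean $\binom{n}{m}/\binom{n}{m+l}\approx e^{2l^2/n}$. For $l=1$ this mean is $1+O(1/n)$, and by Poisson approximation the probability that \emph{no} chain passes through $x$ is roughly $e^{-1}$. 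So already at level $m+1$, a constant proportion of the $\approx\binom{n}{m}$ elements are not on any $\mathcal{C}_M$ at all, and summing over the middle $\Theta(\sqrt{n})$ levels you leave $\Theta(2^n)$ elements completely unassigned before truncation even begins. Truncation only makes matters worse: whenever an intermediate element of $\mathcal{C}_M$ is assigned to another chain, all elements of $\mathcal{C}_M$ beyond it that were assigned to $M$ also become orphaned. Absorbing $\Theta(2^n)$ leftover elements would force you to modify a constant fraction of the $\binom{n}{m}$ chains, destroying the claimed uniformity.

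There is a second gap: the proposed concentration argument is a placeholder. You correctly flag the coupling between chain lengths as the crux, but ``a container-type argument on a weighted auxiliary graph'' is not a proof; you have not said what the containers contain, what event a fingerprint encodes, or why the union bound closes. The paper's container lemma is for a concrete and quite different purpose: it shows that every antichain in $T=\bigcup_{k< i\le C_0} A_i$ lies in one of $2^{2^{n}n^{-3/2+o(1)}}$ sets of Lubell mass $1+n^{-1/3+o(1)}$, and is then combined with Chernoff to bound the maximum antichain in a \emph{random subset} $K_a$ of $T$. The randomness in the paper is only in which elements are placed into which slot of a fixed filling scheme, so coverage is guaranteed by construction; Dilworth then partitions each $K_a$ into roughly $|A_{a-1}|-|A_a|$ chains, and defect Hall attaches these to a canonical chain decomposition of $A_0\cup\dots\cup A_k$. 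That architecture---deterministic slots, random contents, containers to bound antichains, matchings to glue---is what your proposal is missing.
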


\noindent
We made no serious attempt to optimize the error terms in this result. Let us remark that we will show in Section 2.7 that the chain decomposition provided by Theorem~\ref{thm:mainthm} has the following additional property.

\begin{corollary}
\label{remark}
 Let $\mathcal{C}$ be a chain decomposition of $2^{[n]}$ provided by Theorem~\ref{thm:mainthm}. Then the chains of size ${s(1+O(n^{-\frac{1}{16}}))}$ in $\mathcal{C}$ cover $1-n^{-\frac{1}{8}+o(1)}$ proportion of $2^{[n]}$.
\end{corollary}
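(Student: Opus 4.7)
The plan is to bound the total number of elements lying in \emph{bad} chains, where we call $C\in\mathcal{C}$ bad if its size is not of the form $s(1+O(n^{-1/16}))$. Let $\mathcal{C}_{b}\subset\mathcal{C}$ denote the set of bad chains; by Theorem~\ref{thm:mainthm}, $|\mathcal{C}_{b}|\leq n^{-1/8+o(1)}\binom{n}{\lfloor n/2\rfloor}$. Since $\sum_{C\in\mathcal{C}}|C|=2^{n}$, it suffices to show that $\sum_{C\in\mathcal{C}_{b}}|C|\leq n^{-1/8+o(1)}\cdot 2^{n}$, which instantly yields Corollary~\ref{remark}.

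A naive attempt using only information about the good chains is not quite strong enough: since each good chain has size in $[s(1-O(n^{-1/16})),s(1+O(n^{-1/16}))]$, subtracting their minimum total length from $2^{n}$ yields $\sum_{C\in\mathcal{C}_{b}}|C|\leq (n^{-1/8+o(1)}+O(n^{-1/16}))\cdot 2^{n}$, and the second term dominates. The trivial bound $|C|\leq n+1$ is even weaker, losing an extra factor of $\sqrt{n}$. Hence we must argue that the bad chains themselves are not much longer than the average chain of length $s$.

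The key step, and the main obstacle, is to extract from the proof of Theorem~\ref{thm:mainthm} a \emph{uniform} upper bound on all chain lengths of the form $|C|\leq s\cdot n^{o(1)}$, for example $|C|=O(s\sqrt{\log n})$ as in the earlier construction of Hsu, Logan, Shahriari and Towse~\cite{HLST02,HLST03}. Since the decomposition in Theorem~\ref{thm:mainthm} is built by a random process in which each chain has expected length $s$, a Chernoff-type concentration together with a union bound over the $\binom{n}{\lfloor n/2\rfloor}$ chains should yield such a uniform bound with high probability. The cleanest route is to verify that the same random construction which is shown to produce the desired proportion of good chains also meets the deterministic cap $|C|\leq s\cdot n^{o(1)}$ simultaneously, so that the high-probability event appearing in the proof of Theorem~\ref{thm:mainthm} may be intersected with $\{|C|\leq s\cdot n^{o(1)} \text{ for every } C\in\mathcal{C}\}$ without loss of probability.

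Granted this uniform cap, the corollary follows from a one-line computation:
\[
\sum_{C\in\mathcal{C}_{b}}|C|\;\leq\; |\mathcal{C}_{b}|\cdot s\cdot n^{o(1)}\;\leq\; n^{-1/8+o(1)}\binom{n}{\lfloor n/2\rfloor}\cdot s\cdot n^{o(1)}\;=\;n^{-1/8+o(1)}\cdot 2^{n},
\]
using $\binom{n}{\lfloor n/2\rfloor}\cdot s=2^{n}$ and absorbing the extra $n^{o(1)}$ factor into the $o(1)$ exponent. Consequently the good chains cover at least $(1-n^{-1/8+o(1)})\cdot 2^{n}$ elements of $2^{[n]}$, which is precisely the statement of Corollary~\ref{remark}.
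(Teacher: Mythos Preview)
Your proposed route has a genuine gap: the uniform cap $|C|\leq s\cdot n^{o(1)}$ simply does not hold for the decomposition built in Theorem~\ref{thm:mainthm}. In that construction the leftover set $\mathcal{L}$ contains all of $[n]^{(>m+C_{0})}$ with $C_{0}\approx\sqrt{\tfrac{1}{3}n\log n}$, and these elements are absorbed by appending pieces of chains $S_{x}\subset[n]^{(\geq m+k)}$ to certain chains of length $k+1$. At least one $S_{x}$ reaches the top element $[n]$, so after reflecting to the lower half the resulting chain has length $\Theta(n)$, not $s\cdot n^{o(1)}$. No Chernoff argument over the random part of the construction can repair this, since the long tails come from the deterministic leftover-gluing step; hence your final one-line computation cannot be justified.

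The paper avoids this entirely, and its argument works for \emph{any} chain decomposition satisfying the conclusion of Theorem~\ref{thm:mainthm}, with no further appeal to the construction. Split the bad chains $\mathcal{C}_{0}$ at the threshold $\sqrt{n}\log n$. Chains of length below $\sqrt{n}\log n$ contribute at most $|\mathcal{C}_{0}|\cdot\sqrt{n}\log n\leq Mn^{-1/8+o(1)}\cdot\sqrt{n}\log n=2^{n}n^{-1/8+o(1)}$ elements. A chain of length at least $\sqrt{n}\log n$ must contain a set of size $\geq\tfrac{n+\sqrt{n}\log n}{2}$ or $\leq\tfrac{n-\sqrt{n}\log n}{2}$; by Chernoff there are at most $2^{n+1}e^{-(\log n)^{2}/2}$ such sets, hence at most that many such chains, and since each has length at most $n+1$ their total contribution is $O(2^{n}/n)$. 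Summing gives $\sum_{C\in\mathcal{C}_{0}}|C|\leq 2^{n}n^{-1/8+o(1)}$, which is exactly what you wanted. The point is that the \emph{number} of very long chains is controlled by the scarcity of sets far from the middle level, so no uniform length bound is needed.
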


Our main theorem has the following interesting application. 
The well known theorem of Mantel states that if a graph $G$ with $n$ vertices does not contain a triangle, then it has at most $\lfloor \frac{n^{2}}{2}\rfloor$ edges, and this bound is sharp for every $n$. Kalai (see~\cite{DLT19}) proposed the following question: what is the size of the smallest set $T$ of triples in an $n$ element vertex set $V$ such that any graph on $V$ with $\lfloor \frac{n^{2}}{2}\rfloor+1$ edges contains a triangle spanned by a triple in $T$? Das, Lamaison and Tran~\cite{DLT19} proved that the answer is $(\frac{1}{2}+o(1))\binom{n}{3}$, where the upper bound also follows from an earlier work of Allen, B\"ottcher, Hladk\'y, Piguet~\cite{ABHP13}. The authors also propose to study Kalai-type questions for other well known extremal problems.  Motivated by Sperner's theorem they asked 
for the minimum number of forbidden comparable pairs forcing that the largest subfamily of $2^{[n]}$ not containing any of them has size at most $\binom{n}{\lfloor n/2\rfloor}$.
Let $B_{n}$ denote the comparability graph of $2^{[n]}$, that is, $V(B_{n})=2^{[n]}$ and $x,y\in 2^{[n]}$ are joined by an edge if $x\subset y$ or $y\subset x$. It is a nice exercise to show that $B_{n}$ has $3^{n}-2^{n}$ edges. Sperner's theorem is equivalent to the statement that the size of the largest independent set of $B_{n}$ is $\binom{n}{\lfloor n/2\rfloor}$. In this setting, the question of Das, Lamaison and Tran can be reformulated as follows. What is the least number of edges of a subgraph $G$ of $B_{n}$ with $V(G)=2^{[n]}$ such that $G$ has no independent set larger than $\binom{n}{\lfloor n/2\rfloor}$? Using Theorem~\ref{thm:mainthm}, we answer this question asymptotically.

\begin{theorem}\label{thm:sperner}
Let $G$ be a subgraph of $B_{n}$ with the minimum number of edges such that $V(G)=2^{[n]}$ and $G$ has no independent set larger than $\binom{n}{\lfloor n/2\rfloor}$. Then $|E(G)|=(\sqrt{\frac{\pi}{8}}+o(1))2^{n}\sqrt{n}$.
\end{theorem}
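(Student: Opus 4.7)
The plan is to establish matching lower and upper bounds, both of order $(\sqrt{\pi/8}+o(1))\,2^{n}\sqrt{n}$. Write $N=\binom{n}{\lfloor n/2\rfloor}$ and $s=2^{n}/N$; by Stirling's formula, $s\sim\sqrt{\pi n/2}$.

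For the lower bound I would not use Theorem~\ref{thm:mainthm} at all, but combine the Caro--Wei inequality with Cauchy--Schwarz. If $G\subseteq B_{n}$ has vertex set $2^{[n]}$ and $\alpha(G)\leq N$, then Caro--Wei gives
\[
N\;\geq\;\alpha(G)\;\geq\;\sum_{v\in 2^{[n]}}\frac{1}{d_{G}(v)+1},
\]
and the Cauchy--Schwarz inequality $\bigl(\sum_{v}1\bigr)^{2}\leq \bigl(\sum_{v}(d_{G}(v)+1)\bigr)\bigl(\sum_{v}1/(d_{G}(v)+1)\bigr)$, together with $\sum_{v}(d_{G}(v)+1)=2|E(G)|+2^{n}$, yields $2^{2n}\leq N(2|E(G)|+2^{n})$. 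Rearranging gives $|E(G)|\geq (2^{2n}/N-2^{n})/2=2^{n}(s-1)/2=(\sqrt{\pi/8}-o(1))\,2^{n}\sqrt{n}$. Equality is asymptotically attained when $G$ is a disjoint union of $N$ cliques of size $s$, which motivates the construction used for the upper bound.

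For the upper bound I would apply Theorem~\ref{thm:mainthm} to produce a chain decomposition $\mathcal{C}_{1},\dots,\mathcal{C}_{N}$ of $2^{[n]}$ in which $(1-n^{-1/8+o(1)})N$ of the chains (the \emph{good} ones) have size $s(1+O(n^{-1/16}))$. I would then subdivide each chain into consecutive blocks of length at most $\lceil s\rceil$ and turn each block into a clique in $B_{n}$; since any two elements of such a block are comparable, the resulting graph is a legitimate subgraph of $B_{n}$ which is a disjoint union of cliques, so its independence number equals the number of blocks. Each good chain is a single block contributing one clique, while each \emph{bad} chain $\mathcal{C}_{i}$ contributes at most $|\mathcal{C}_{i}|/s+1$ blocks, so the total block count is $N+O(Nn^{-1/8+o(1)})$. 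To bring it down to exactly $N$, I would iteratively merge two adjacent blocks inside some long bad chain into a single larger clique; each such merge adds at most $\lceil s\rceil^{2}=O(n)$ edges and decreases the block count by one.

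The main term in the edge count comes from the good chains: $(1-o(1))N\cdot\binom{s(1+o(1))}{2}=(1+o(1))\,2^{n}s/2=(\sqrt{\pi/8}+o(1))\,2^{n}\sqrt{n}$, matching the lower bound. By Corollary~\ref{remark}, the bad chains cover only $n^{-1/8+o(1)}\cdot 2^{n}$ vertices, so the blocks coming from bad chains (each of size at most $\lceil s\rceil$) contribute in total at most $O(s\cdot n^{-1/8+o(1)}\cdot 2^{n})=O(2^{n}n^{3/8+o(1)})=o(2^{n}\sqrt{n})$ edges, and the $O(Nn^{-1/8+o(1)})$ merges add a further $O(Nn^{-1/8+o(1)}\cdot n)=O(2^{n}n^{3/8+o(1)})=o(2^{n}\sqrt{n})$ edges. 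The principal subtlety is the treatment of the bad chains: naively turning each whole chain from Theorem~\ref{thm:mainthm} into a clique could push the error term up to $\Omega(2^{n}n^{7/8+o(1)})$, and the subdivide-and-merge step above is what brings it to $o(2^{n}\sqrt{n})$ so that upper and lower bounds coincide.
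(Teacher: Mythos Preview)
Your lower bound is fine and equivalent to the paper's: Caro--Wei combined with Cauchy--Schwarz is exactly the Tur\'an bound $|E(G)|\ge |V|^2/(2\alpha(G))-|V|/2$ that the paper invokes.

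The upper bound, however, has a real gap. Your subdivide-and-merge scheme is forced to collapse back to the paper's construction ``make each chain a single clique''. Indeed, you have $N$ chains, each contributes at least one block, and blocks from different chains cannot be merged (their union is not a clique in $B_n$); so the only way to reach exactly $N$ blocks is one block per chain. Consequently the final edge count is precisely $\sum_{C\in\mathcal C}\binom{|C|}{2}$, and what you actually need to bound is $\sum_{\text{bad }C}|C|^2$. Your accounting ``each merge adds at most $\lceil s\rceil^2$ edges'' is only true for the \emph{first} merge in a chain; once a block has been merged it has size $>s$, and merging all $b$ blocks of a bad chain $C$ back together adds $\binom{|C|}{2}-\sum_i\binom{s_i}{2}\approx \binom{b}{2}s^2$ edges, not $(b-1)s^2$. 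With only Corollary~\ref{remark} and the trivial bound $|C|\le n+1$ you are stuck at $\sum_{\text{bad}}|C|^2\le (n+1)\cdot n^{-1/8+o(1)}2^n=2^n n^{7/8+o(1)}$, exactly the obstacle you identified --- your detour does not get around it.

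The missing observation (and what the paper does) is that very long chains are \emph{extremely} rare for a structural reason independent of Theorem~\ref{thm:mainthm}: any chain of length at least $\sqrt n\log n$ must contain a set of size $\ge (n+\sqrt n\log n)/2$ or $\le (n-\sqrt n\log n)/2$, and by Chernoff there are at most $2^{n+1}e^{-(\log n)^2/2}$ such sets. Hence at most that many chains have length $\ge \sqrt n\log n$, and their contribution to $\sum_C|C|^2$ is at most $2^{n+1}e^{-(\log n)^2/2}\cdot n^2=o(2^n)$. The remaining bad chains all have length below $\sqrt n\log n$, so their contribution is at most $N n^{-1/8+o(1)}\cdot n(\log n)^2=o(2^n\sqrt n)$. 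With this split the naive construction already gives the matching upper bound; no subdividing or merging is needed.
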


Finally, we show that the uniform chain decomposition provided by Theorem~\ref{thm:mainthm} can be applied to various extremal set theory problems, generalizing ideas of the second author~\cite{T19}. The typical question in extremal set theory is that how large can be a family $H\subset 2^{[n]}$ that avoids a certain forbidden configuration. One way to attack such a problem is as follows. A \emph{$d$-dimensional grid} is a $d$-term  Cartesian product of the form $[k_1]\times\dots\times[k_d]$. We fix some $d$ and partition $2^{[n]}$ into $d$-dimensional grids of roughly the same size. Then, we bound the size of the intersection of each of these grids with the family $H$ avoiding the forbidden configuration. The advantage of this approach is that the problem of the maximal subset of the grid avoiding a given forbidden configuration is equivalent to an (ordered) hypergraph Tur\'an problem, for which sometimes there is already an  available good bound. 
In order to find a partition into $d$-dimensional grids, we write $2^{[n]}$ as the Cartesian product $2^{[n_{1}]}\times\dots\times 2^{[n_{d}]}$, where $n_{i}\approx \frac{n}{d}$, and find a uniform chain decomposition $\mathcal{C}_{i}$ of $2^{[n_{i}]}$. Then the Cartesian products $C_{1}\times\dots\times C_{d}$, where $C_{1}\in\mathcal{C}_{1},\dots,C_{d}\in\mathcal{C}_{d}$, partition  $2^{[n]}$ in the desired manner. We will illustrate how to apply this idea in case when the forbidden configuration is two sets and their union, a copy of some poset $P$, or a full Boolean algebra.

Our paper is organized as follows. In Section~\ref{sect:mainproof}, we prove Theorem~\ref{thm:mainthm} and Corollary~\ref{remark}. In Section~\ref{sect:sperner}, we prove Theorem~\ref{thm:sperner}. In Section~\ref{sect:extremal}, we discuss further possible applications of our main result in extremal set theory.

\section{Decomposition into chains of uniform size}\label{sect:mainproof}

\subsection{Preliminaries}
We use the following standard graph theoretic notation. If $G$ is a graph and $x\in V(G)$, then $\deg_{G}(x)$ denotes the degree of $x$ in $G$. Also, if $U\subset V(G)$, then $N_{G}(U)=\{y\in V(G)\setminus U:\exists x\in U, xy\in E(G)\}$ is the \emph{external neighborhood} of $U$ in $G$, and if $U=\{x\}$, we write $N_{G}(x)$ instead of $N_{G}(\{x\})$.

Also, we use the following set theoretic notation. If $0\leq l\leq n$, then $[n]^{(l)}=\{x\in 2^{[n]}:|x|=l\}$ and $[n]^{(\geq l)}=\{x\in 2^{[n]}:|x|\geq l\}$. We define $[n]^{(\leq l)}$ similarly. Also, a \emph{level} of $2^{[n]}$ refers to one of the families $[n]^{(l)}$ for $l=0,\dots,n$.

The proof of our main theorem uses probabilistic tools, see the book of Alon and Spencer~\cite{AS04} for a general reference about the probabilistic method. In particular, we need the following variants of Chernoff's inequality, see e.g. Theorem 2.8 in \cite{JLR00}.

\begin{claim}\label{claim:chernoff}(Chernoff's inequality)
 Let $X_1,\dots,X_n$ be independent random variables such that $\mathbb{P}(X_{i}=1)=p_{i}$ and $\mathbb{P}(X_{i}=0)=1-p_{i}$, and let $X=\sum_{i=1}^{n}X_i$. Then for $\delta>0$, we have
 
$$\mathbb{P}(X\geq (1+\delta)\mathbb{E}(X))\leq 
\begin{cases}
e^{-\frac{\delta^{2}}{3}\mathbb{E}(X)} &\mbox{ if }\delta\leq 1,\\
e^{-\frac{\delta}{3}\mathbb{E}(X)} &\mbox{ if }\delta>1.
\end{cases}$$

Also, if $p_{1}=\dots=p_{n}=\frac{1}{2}$ and $t>0$, then
$$\mathbb{P}\left(X\geq \frac{n}{2}+t\right)\leq e^{-\frac{2t^{2}}{n}}.$$
\end{claim}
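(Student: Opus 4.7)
The statement is the classical Chernoff/Hoeffding bound, so my plan is to use the standard exponential-moment method (Bernstein's trick) and then deduce the two regimes by elementary calculus. The main idea is that for any $\lambda > 0$, Markov's inequality applied to the random variable $e^{\lambda X}$ gives
\[
\mathbb{P}(X \geq (1+\delta)\mu) \;\leq\; e^{-\lambda(1+\delta)\mu}\,\mathbb{E}\bigl[e^{\lambda X}\bigr],
\]
where $\mu = \mathbb{E}(X) = \sum_i p_i$, and then we minimize the right-hand side over $\lambda$.

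First I would compute the moment generating function. Because the $X_i$ are independent,
\[
\mathbb{E}\bigl[e^{\lambda X}\bigr] \;=\; \prod_{i=1}^n \bigl(1 + p_i(e^{\lambda}-1)\bigr) \;\leq\; \prod_{i=1}^n \exp\bigl(p_i(e^{\lambda}-1)\bigr) \;=\; \exp\bigl(\mu(e^{\lambda}-1)\bigr),
\]
using $1+x \leq e^x$. Plugging this in and choosing $\lambda = \ln(1+\delta)$ (the optimizer) yields the clean intermediate bound
\[
\mathbb{P}(X \geq (1+\delta)\mu) \;\leq\; \left(\frac{e^{\delta}}{(1+\delta)^{1+\delta}}\right)^{\mu}.
\]
To obtain the two stated forms I would then analyse $f(\delta) = \delta - (1+\delta)\ln(1+\delta)$: a short Taylor/convexity argument shows $f(\delta) \leq -\delta^2/3$ for $0 < \delta \leq 1$ (compare derivatives at $0$ and use that $f''$ is well-controlled on $[0,1]$), while for $\delta > 1$ one checks $(1+\delta)\ln(1+\delta) \geq \tfrac{4}{3}\delta$ by verifying the inequality at $\delta=1$ and comparing derivatives. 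These together give the two cases.

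For the second inequality (the symmetric $p_i = 1/2$ case), the same recipe works but with a different MGF bound. Writing $Y_i = X_i - \tfrac{1}{2} \in \{-\tfrac{1}{2},\tfrac{1}{2}\}$, Hoeffding's lemma for variables bounded in $[-1/2,1/2]$ gives $\mathbb{E}[e^{\lambda Y_i}] \leq e^{\lambda^2/8}$; this can be proven from scratch by bounding $\cosh(\lambda/2) \leq e^{\lambda^2/8}$ via comparing the Taylor series. Then
\[
\mathbb{P}\!\left(X \geq \tfrac{n}{2} + t\right) \;\leq\; e^{-\lambda t}\,\mathbb{E}\bigl[e^{\lambda \sum Y_i}\bigr] \;\leq\; e^{-\lambda t + n\lambda^2/8},
\]
and optimizing at $\lambda = 4t/n$ produces the advertised bound $e^{-2t^2/n}$. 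I do not anticipate a genuine obstacle here: the result is textbook and the only mildly delicate point is verifying the two elementary inequalities for $f(\delta)$ in the right regimes, which is a routine calculus exercise. (Indeed, the authors simply cite Theorem 2.8 of \cite{JLR00}, so presumably no proof is given in the paper.)
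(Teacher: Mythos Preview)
Your proposal is correct and complete: it is the standard exponential-moment derivation of the Chernoff and Hoeffding bounds, and the calculus checks you outline for $f(\delta)$ in the two regimes go through as claimed. As you anticipated, the paper does not prove this claim at all but simply quotes it with a reference to Theorem~2.8 of \cite{JLR00}, so there is nothing to compare against.
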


Our proof of Theorem~\ref{thm:mainthm} depends quite delicately on the distribution of the sizes of the levels of $2^{[n]}$. Next, we collect some estimates on the binomial coefficients we use in this paper.

\begin{claim}\label{claim:binomial}
Let $n$ be a positive integer, $m=\lceil \frac{n}{2}\rceil$ and $M=\binom{n}{m}$.
\begin{enumerate}
\item $M=\left(\sqrt{\frac{2}{\pi}}+o(1)\right)\frac{2^{n}}{\sqrt{n}}.$~\cite{S14}
\item For $l=o(n^{2/3})$, $\binom{n}{m+l}=(1+o(1))Me^{-2l^{2}/n}.$~\cite{S14}
\item For $0<l$, $\sum_{i>m+l}\binom{n}{i}\leq 2^{n}e^{-2l^{2}/n}.$ (Chernoff's inequality)
\item For $0<l<\sqrt{n}$, $M\left(1-\frac{2l^{2}}{n}\right)\leq \binom{n}{m+l}<M\left(1-\frac{l^{2}}{4n}\right)$.
\item For $0\leq l<10\sqrt{n}$, $\binom{n}{m+l}-\binom{n}{m+l+1}=\Theta(l2^{n}n^{-3/2}).$
\item For $\sqrt{n}\leq l=o(n^{2/3})$, $\sum_{i\geq m+l}\binom{n}{i}\geq (e^{-7}+o(1))2^ne^{-2l^2/n}\frac{\sqrt{n}}{l}$.
\end{enumerate}
\end{claim}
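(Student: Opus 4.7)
The six estimates are standard but quantitatively delicate facts about the middle of the binomial distribution. Item (1) is Stirling's formula applied to $M = n!/(m!(n-m)!)$. Item (3) is a direct application of the Chernoff bound in Claim~\ref{claim:chernoff} to $X \sim \Bi(n,1/2)$: the upper tail $\mathbb{P}(X > m+l)$ is at most $e^{-2l^2/n}$, so the number of subsets of size $>m+l$ is at most $2^n e^{-2l^2/n}$. Items (2), (4), and (5) all flow from analysis of the product identity
\[
\frac{\binom{n}{m+l}}{M} \;=\; \prod_{j=1}^{l}\frac{m-j+1}{n-m+j},
\]
which for $n$ even simplifies to $\prod_{j=1}^{l}\bigl(1 - (2j-1)/(n/2+j)\bigr)$; the odd case is analogous after an $O(1/n)$ shift in indexing.

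For item (2), take logarithms and apply $\log(1-x) = -x + O(x^2)$: Taylor-expanding $1/(n/2+j) = (2/n)\bigl(1 - 2j/n + O(j^2/n^2)\bigr)$ gives $\sum_{j=1}^{l}(2j-1)/(n/2+j) = 2l^2/n + O(l^3/n^2)$, and the quadratic remainder sum contributes a further $O(l^3/n^2)$; both error terms are $o(1)$ precisely when $l = o(n^{2/3})$, yielding $\binom{n}{m+l} = (1+o(1)) M e^{-2l^2/n}$. Item (4) requires a non-asymptotic two-sided bound valid for all $l < \sqrt n$: the upper direction chains $1-x \leq e^{-x}$ termwise to give $\binom{n}{m+l}/M \leq e^{-l^2/n}$, then invokes the elementary inequality $e^{-x} < 1 - x/4$ valid on $0 < x < 1$; the lower direction uses $\prod (1-a_j) \geq 1 - \sum a_j$ together with $a_j = (2j-1)/(n/2+j) \leq (4j-2)/n$ to give $\binom{n}{m+l}/M \geq 1 - 2l^2/n$. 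Item (5) is the adjacent-ratio computation $\binom{n}{m+l+1}/\binom{n}{m+l} = (n-m-l)/(m+l+1) = 1 - \Theta((l+1)/n)$; combined with $\binom{n}{m+l} = \Theta(M) = \Theta(2^n/\sqrt n)$ from (1) and (4), this delivers $\binom{n}{m+l} - \binom{n}{m+l+1} = \Theta(l \cdot 2^n n^{-3/2})$ in the range $l \geq 1$.

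Item (6) is obtained by integrating item (2) across a block of consecutive levels. Set $L = \lfloor n/(2l)\rfloor$; since $l \geq \sqrt n$ we have $L \leq \sqrt n/2$, so $l + L \leq 2l = o(n^{2/3})$ and item (2) applies uniformly to every level $m+l+j$ with $0 \leq j \leq L$. Factoring out $M e^{-2l^2/n}$ from $\binom{n}{m+l+j} \approx M e^{-2(l+j)^2/n}$ leaves the exponent $-(4lj/n + 2j^2/n)$, which the choice of $L$ bounds by an absolute constant ($\leq 5/2$) throughout; hence each of the $L+1 \geq n/(2l)$ summands is at least a fixed constant times $M e^{-2l^2/n}$, producing a lower bound of order $(\sqrt n/l) \cdot 2^n e^{-2l^2/n}$ with an explicit constant safely exceeding $e^{-7}$. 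The only genuine obstacle in the entire claim is the uniformity required in item (2): the $(1+o(1))$ factor must be uniform over the whole range $l = o(n^{2/3})$, which forces explicit bookkeeping of the Taylor errors rather than an asymptotic one-liner. Once item (2) is in place with this uniformity, items (4) and (5) follow mechanically from the same product identity, and item (6) follows from summing (2) over a single well-chosen block of levels.
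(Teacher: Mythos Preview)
Your argument is correct and follows essentially the same route as the paper: the same product identity for item~(4), the same adjacent-ratio computation for item~(5), and the same idea of summing item~(2) over a block of $\Theta(n/l)$ consecutive levels for item~(6). One small slip: in your treatment of item~(5) you invoke item~(4) to get $\binom{n}{m+l}=\Theta(M)$, but item~(4) only covers $0<l<\sqrt n$, whereas item~(5) ranges over $l<10\sqrt n$; you should cite item~(2) instead (which gives $\binom{n}{m+l}=(1+o(1))Me^{-2l^{2}/n}=\Theta(M)$ uniformly on that range). With that correction the proposal matches the paper's proof.
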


\begin{proof}
  See the Appendix.
\end{proof}

\subsection{Overview of the proof}

The proof of Theorem~\ref{thm:mainthm} is somewhat technical at certain stages, so let us roughly outline our strategy. Let $k=\lceil s/2\rceil$. First of all, we only consider the upper half of $2^{[n]}$, $B=[n]^{(\geq \lfloor n/2\rfloor)}$, as if we manage to partition $B$ into chains of size $k$ approximately, then we can easily turn it into a chain partition of $2^{[n]}$ with the desired properties.

We start with the $k$ largest levels.  The remaining levels $[n]^{(l)}$ for $l> \lceil n/2\rceil+k$ we cut into small pieces and glue these small pieces to the levels $[n]^{(\lceil n/2\rceil)},\dots,[n]^{(\lceil n/2\rceil+k)}$ such that every level of the resulting new poset has size roughly $\binom{n}{\lfloor n/2\rfloor}$. Since this new poset has exactly $k+1$ levels, one can hope to find a chain partition of it into $\binom{n}{\lfloor n/2\rfloor}$ chains, each of size $\approx k$. Indeed, we show that 
if we cut the levels $[n]^{(l)}$ for $l> \lceil n/2\rceil+k$ randomly, then such a chain partition exists with high probability.

\subsection{Setting up} 

Throughout this section, we assume that $n$ is sufficiently large for our arguments to work. Let $m=\lceil \frac{n}{2}\rceil$, $M=\binom{n}{m}$, $A_{i}=[n]^{(m+i)}$ for $i=0,\dots,n-m$, and $B=[n]^{(\geq m)}$. Then $|B|=2^{n-1}$ if $n$ is odd, and $|B|=2^{n-1}+\frac{M}{2}$ if $n$ is even. We remind the reader that $s=\frac{2^{n}}{M}$, and define $k=\lceil \frac{s}{2}\rceil$. Note that  $(k-1)M<|B|<(k+1)M$.  Also, as $s=(1+o(1))\sqrt{\pi/2}\sqrt{n}$, we have $|A_{k}|=Me^{-\pi/4+o(1)}$. In particular, $0.45 M< |A_{k}|<0.46 M$.

Consider the subposet $P_{0}$ of $B$ induced by the levels $A_{0},\dots,A_{k}$. Next, we would like to ''fill up'' $P_{0}$ with the elements of $[n]^{(>k+m)}$, that is, we want to add elements of $[n]^{(>k+m)}$ to the levels $A_{1},\dots,A_{k}$ such that the size of each level becomes roughly $M$. We do this as follows: imagine a $(k+1)\times M$ sized rectangle partitioned into $(k+1)M$ unit squares indexed by $(a,b)\in \{0,\dots,k\}\times [M]$, where we fill some of the unit squares with the elements of $B$. We want do this in a way such that each row corresponds to an expanded level $A_{i}'$. First, for $a=0,\dots,k$, fill the unit squares $(a,1),\dots, (a,|A_{a}|)$ with the elements of $A_{a}$. Then, we will fill the rest of the unit squares as follows. For $1\leq a\leq b\leq k$, let $X_{a,b}=\{b\}\times \{|A_{a}|+1,\dots,|A_{a-1}|\}$, and for $l=0,\dots,k$, let the $l$-th diagonal be the union $\bigcup_{l\leq a\leq k-l} X_{a,a+l}$. Note that $|X_{a,b}|=|A_{a-1}|-|A_{a}|$ and the size of the $l$-th diagonal is $M-|A_{k-l}|$. Order the elements of $[n]^{(>k+m)}$ in an increasing order of the sizes, and among sets of the same size, chose a random ordering. Start filling up the first diagonal using the elements of $[n]^{(>k+m)}$ with respect to this order. Then if the $l$-th diagonal is already filled up, we move to the $(l+1)$-th diagonal. Also, we fill up each diagonal from right to left. We do this until we run out of elements in $[n]^{(>k+m)}$. In the end, the $i$-th row of the rectangle becomes the level $A_{i}'$, and we get a poset $P$ with levels $A_{0}',\dots,A_{k}'$ in which $x\leq_{P} y$ if  $x$ and $y$ are in different levels and $x\subset y$. Then $P$ is a subposet of $B$ of height $k+1$ such that every level of $P$ has size roughly $M$. Our goal (more or less) is to show that $P$ can be partitioned into $M$ chains. In the rest of the proof, we shall not work directly with the poset $P$, but for a better understanding of our proof, it is worth seeing this underlying structure. See Figure \ref{figure:chains} for an illustration. 
 
 For the sake of clarity, let us define our sets $X_{a,b}$ formally. Let $C_{0}=\left\lceil \sqrt{\frac{1}{3}n \log n}\right\rceil$. Let $T=\bigcup_{k+1\leq i\leq C_{0}}A_{i}$ and $Z=[n]^{(>m+C_{0})}$. Then $|Z|\leq n^{-2/3}2^{n}$ by Claim~\ref{claim:binomial}, (3). For $ k+1\leq i\leq C_{0}$, let $\prec_{i}$ be a random total ordering on $A_{i}$ (chosen uniformly among all the total orders), and define the total ordering $\prec$ on $T$ such that for $x\in A_{a}$ and $y\in A_{b}$, we have $x\prec y$ if $a<b$, or $a=b$ and $x\prec_{a} y$. In other words, we randomly order the elements of the levels from $A_{k+1}$ to $A_{C_{0}}$, and then we lay out these levels next to each other, this is the total order $(T,\prec)$.
 
 Each set $X_{a,b}$ will be an interval in $T$ with respect to the total order $\prec$. Let $I^{*}=\{(a,b):1\leq a\leq b\leq k\}$, which will serve as the set of possible indices of these intervals. Order the elements of $I^{*}$ by $\prec'$ such that $(a,b)\prec' (a',b')$ if $b-a<b'-a'$, or $b-a=b'-a'$ and $a<a'$, then $\prec'$ will be the order of our desired intervals. Cut $T$ into intervals $X_{a,b}$, where $(a,b)\in I^{*}$, with the following procedure. Let $(1,1)=(a_{1},b_{1})\prec'\dots\prec' (a_{|I^{*}|},b_{|I*|})$ be the elements of $I^{*}$, and let $X_{a_{1},b_{1}}$ be the initial segment of $T$ of size $|A_{0}|-|A_{1}|$. Now if $X_{a_{l},b_{l}}$ is already defined for $l\geq 1$, and there are still at least $|A_{a_{l+1}-1}|-|A_{a_{l+1}}|$ elements of $T$ larger than $X_{a_{l},b_{l}}$ with respect to $\prec$, then let $X_{a_{l+1},b_{l+1}}$ be the $|A_{a_{l+1}-1}|-|A_{a_{l+1}}|$ smallest elements of $T$ larger than $X_{a_{l},b_{l}}$. Otherwise, stop, and set $I=\{(a_{j},b_{j}):1\leq j\leq l\}$. 

\begin{figure}
\begin{tikzpicture}
  \draw[dashed] (0,0) rectangle (10,2.1) ;
  \draw (0,0) rectangle (10,0.3) ; \node at (5,0.15) {\tiny $A_{0}$} ;
  \draw (0,0.3) rectangle (9.6,0.6) ; \node at (4.8,0.45) {\tiny $A_{1}$} ;
  \draw (0,0.6) rectangle (9,0.9) ;
  \draw (0,0.9) rectangle (8.2,1.2) ;  
  \draw (0,1.2) rectangle (7,1.5) ;
  \draw (0,1.5) rectangle (5.7,1.8) ;
  \draw (0,1.8) rectangle (4.5,2.1) ; \node at (2.25,1.95) {\tiny $A_k$} ;

  \draw[fill=black!10!white] (-2.1,3.5) rectangle (1.5,3.8) ; \node at (-0.3,3.97) {\tiny $A_{k+1}$} ;
  \draw[fill=black!20!white] (1.5,3.5) rectangle (4.5,3.8) ;   \node at (3,3.97) {\tiny $A_{k+2}$} ;
  \draw[fill=black!30!white] (4.5,3.5) rectangle (6.9,3.8) ;
  \draw[fill=black!40!white] (6.9,3.5) rectangle (8.9,3.8) ; 
  \draw[fill=black!50!white] (8.9,3.5) rectangle (10.6,3.8) ; 
  \draw[fill=black!60!white] (10.6,3.5) rectangle (12.1,3.8) ; \node at (11.35,3.97) {\tiny $A_{C_{0}}$} ;
 
  \draw[decoration={brace,mirror,raise=5pt},decorate]
  (-2.1,3.5) -- node[below=6pt] {\small $T$} (12.1,3.5);

  
  \draw[->,very thick] (6.5,3) -- (7,2.4) ;

  \draw[fill=black!10!white] (9.6,0.3) rectangle (10,0.6) ; \node at (10.5,0.45) {\tiny $X_{1,1}$} ; \draw[->] (10.2,0.45) -- (9.8,0.45) ;
  \draw[fill=black!10!white] (9,0.6) rectangle (9.6,0.9) ; \node at (10.5,0.75) {\tiny $X_{1,2}$} ; 
  \draw[fill=black!10!white] (8.2,0.9) rectangle (9,1.2) ;
  \draw[fill=black!10!white] (7,1.2) rectangle (8.2,1.5) ;
  \draw[fill=black!20!white] (5.7,1.5) rectangle (6.4,1.8) ;  \draw[fill=black!10!white] (6.4,1.5) rectangle (7,1.8) ;
  \draw[fill=black!20!white] (4.5,1.8) rectangle (5.7,2.1) ; \node at (5.1,1.95) {\tiny $X_{k,k}$} ;

  \draw[fill=black!20!white] (9.6,0.6) rectangle (10,0.9) ;
  \draw[fill=black!20!white] (9,0.9) rectangle (9.6,1.2) ; \draw[->] (10.2,0.75) -- (9.8,0.75) ;
  \draw[fill=black!30!white] (8.2,1.2) rectangle (8.9,1.5) ; \draw[fill=black!20!white] (8.9,1.2) rectangle (9,1.5) ;  
  \draw[fill=black!30!white] (7,1.5) rectangle (8.2,1.8) ;
  \draw[fill=black!40!white] (5.7,1.8) rectangle (6.5,2.1) ; \draw [fill=black!30!white](6.5,1.8) rectangle (7,2.1) ;

  \draw[fill=black!40!white] (9.6,0.9) rectangle (10,1.2) ;
  \draw[fill=black!40!white] (9,1.2) rectangle (9.6,1.5) ;  
  \draw[fill=black!50!white] (8.2,1.5) rectangle (8.8,1.8) ; \draw[fill=black!40!white] (8.8,1.5) rectangle (9,1.8) ;
  \draw[fill=black!50!white] (7.1,1.8) rectangle (8.2,2.1) ;
\draw[fill=black!60!white] (7,1.8) rectangle (7.1,2.1) ;

  \draw[fill=black!60!white] (9.6,1.2) rectangle (10,1.5) ;
  \draw[fill=black!60!white] (9,1.5) rectangle (9.6,1.8) ;  
  \draw[fill=black!60!white] (8.6,1.8) rectangle (9,2.1) ;

  \draw[very thick] (9.6,0.3) rectangle (10,1.5) ;
  \draw[very thick] (9,0.6) rectangle (9.6,1.8) ;
  \draw[very thick] (7,1.2) rectangle (8.2,2.1) ;
  \draw[very thick] (5.7,1.5) rectangle (7,2.1) ;  
 \draw[very thick] (4.5,1.8) rectangle (5.7,2.1) ;
\draw[very thick] (8.2,0.9) -- (9,0.9) -- (9,2.1) -- (8.6,2.1) -- (8.6,1.8)-- (8.2,1.8) -- (8.2,0.9) ;

  \draw (0,0.3) rectangle (9.6,0.6) ;
  \draw (0,0.6) rectangle (9,0.9) ;
  \draw (0,0.9) rectangle (8.2,1.2) ;  
\end{tikzpicture}
\caption{We cut the union of the levels $A_{k+1},\dots,A_{C_{0}}$ into small pieces $X_{a,b}$ of size $|A_{a-1}|-|A_{a}|$ for $1\leq a\leq b\leq k$. For $a=1,\dots,k$, we consider the block $X_{a,a}\cup\dots \cup X_{a,k}$ and partition it into $\approx |X_{a,a}|$ chains, whose collection is denoted by $\mathcal{C}_{a}$. Finally, we find a chain decomposition $\mathcal{D}_{0}$ of $A_{0}\cup\dots\cup A_{k}$ into $M$ chains, and attach the chains in $\mathcal{C}_{a}$ to those chains of $\mathcal{D}_{0}$ that end in $A_{a-1}$.}
\label{figure:chains}
\end{figure}
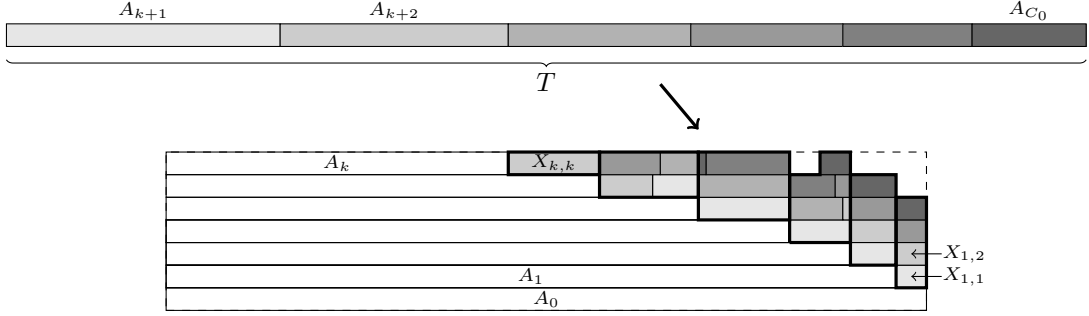

As a reminder, for $l=0,\dots,k-1$, the \emph{$l$-th diagonal} is the union $\bigcup_{a:(a,a+l)\in I}X_{a,a+l}$. Say that a diagonal is \emph{complete} if $(k-l,k)\in I$. Let $\mu\leq k$ be the largest number such that the $(k-\mu)$-th diagonal is not complete. Then for every $1\leq a\leq k$, the number of indices $b$ such that $(a,b)\in I$ is at least $k+1-a-\mu$ (note that this number might be negative). 

Let us estimate $\mu$.
\begin{claim}\label{claim:missingblocks}
 $\mu=O(n^{1/3}).$
\end{claim}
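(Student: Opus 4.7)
The plan is to translate the combinatorial condition defining $\mu$ into a two-sided bound on $|T|$, and then apply Claim~\ref{claim:binomial}(4) to extract a cubic bound on $\mu$. Introduce the abbreviation
$$F(\mu):=\sum_{j=\mu}^{k}\bigl(M-|A_{j}|\bigr),$$
so that $F(\mu+1)$ equals the total blank space in the complete diagonals $0,\dots,k-\mu-1$, while $F(\mu)-F(\mu+1)=M-|A_{\mu}|$ is precisely the size of the first incomplete diagonal $k-\mu$.

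The first step is to verify that
$$F(\mu+1)\leq|T|<F(\mu).$$
The left inequality is immediate, since by definition of $\mu$ every diagonal strictly below $k-\mu$ was filled from $T$. For the right inequality, suppose the greedy procedure halts inside diagonal $k-\mu$ at $X_{j+1,(j+1)+(k-\mu)}$ for some $j\in\{0,1,\dots,\mu-1\}$. The amount of $T$ already consumed equals $F(\mu+1)+(M-|A_{j}|)$ (with the convention $|A_{0}|=M$), and the unused elements form a proper initial segment of the next interval, whose length is $|A_{j}|-|A_{j+1}|$. Hence
$$|T|<F(\mu+1)+M-|A_{j+1}|\leq F(\mu+1)+M-|A_{\mu}|=F(\mu),$$
where the middle inequality uses $j+1\leq\mu$ together with the monotonicity $|A_{j+1}|\geq|A_{\mu}|$.

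Next, I would compute $F(1)-|T|$ explicitly. Writing $|T|=|B|-|Z|-\sum_{i=0}^{k}|A_{i}|$ and $F(1)=kM-\sum_{j=1}^{k}|A_{j}|$, the sums of the $|A_{j}|$'s cancel and one obtains
$$F(1)-|T|=(k+1)M-|B|+|Z|.$$
Since $k=\lceil s/2\rceil$ with $sM=2^{n}$ and $|B|\in\{2^{n-1},\,2^{n-1}+M/2\}$, a direct check gives $(k+1)M-|B|\in[M/2,2M)$, while Claim~\ref{claim:binomial}(3) combined with $M=\Theta(2^{n}/\sqrt{n})$ yields $|Z|=O(n^{-1/6})M=o(M)$. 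Hence $F(1)-|T|=O(M)$.

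Combining the two parts, $|T|<F(\mu)$ gives
$$\sum_{j=1}^{\mu-1}\bigl(M-|A_{j}|\bigr)=F(1)-F(\mu)<F(1)-|T|=O(M).$$
Claim~\ref{claim:binomial}(4) furnishes the lower bound $M-|A_{j}|>j^{2}M/(4n)$ for $0<j<\sqrt{n}$. If $\mu\geq\sqrt{n}$, the left-hand side would already be at least $\sum_{j=1}^{\lceil\sqrt{n}\rceil-1}j^{2}M/(4n)=\Omega(M\sqrt{n})$, contradicting the $O(M)$ upper bound; so $\mu<\sqrt{n}$, and applying the same lower bound over the full range of summation yields $\Omega(M(\mu-1)^{3}/n)=O(M)$, i.e.\ $\mu=O(n^{1/3})$, as desired. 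The only mildly delicate point is the edge case $j=0$ of the greedy stopping rule (no interval of diagonal $k-\mu$ has yet been placed), which is absorbed cleanly by the convention $|A_{0}|=M$; everything else is arithmetic plus the binomial estimates collected in Claim~\ref{claim:binomial}.
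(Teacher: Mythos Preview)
Your proof is correct and follows essentially the same approach as the paper's: both bound $|T|$ above by the total capacity of diagonals $0,\dots,k-\mu$, combine this with $|B|=\sum_{i=0}^{k}|A_i|+|T|+|Z|$ and the basic inequality $(k+1)M-|B|=O(M)$ to deduce $\sum_{j=1}^{\mu-1}(M-|A_j|)=O(M)$, and then invoke Claim~\ref{claim:binomial}(4) to extract the cubic lower bound yielding $\mu=O(n^{1/3})$. Your version is in fact a bit more careful than the paper's in two places --- you analyze the stopping rule precisely to justify the strict inequality $|T|<F(\mu)$, and you explicitly rule out $\mu\geq\sqrt{n}$ before applying Claim~\ref{claim:binomial}(4), whereas the paper glosses over both points --- but the underlying argument is the same.
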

\begin{proof}
If the $l$-th diagonal is complete, then it contains $M-|A_{k-l}|$ elements. Consider the inequality $(k-1)M<|B|$. We have $|B|=\sum_{i=0}^{k}|A_{i}|+|T|+|Z|$, so this inequality can be rewritten as $|T|+|Z|>-2M+\sum_{i=1}^{k}(M-|A_{i}|)$. Since the $(k-\mu)$-th diagonal is not complete, we have $|T|\leq \sum_{l=0}^{k-\mu}(M-|A_{k-l}|)$, which then implies $|Z|\geq -2M+\sum_{i=1}^{\mu-1} (M-|A_{i}|)$. By Claim~\ref{claim:binomial},~(4), we have $|A_{i}|\leq  M\left(1-\frac{i^{2}}{4n}\right)$. 

Therefore, 
$$|Z|\geq -2M+M\sum_{i=1}^{\mu-1}\frac{i^{2}}{4n}\geq \frac{(\mu-1)^{3}M}{12n}-2M.$$
From this, and using that $|Z|\leq n^{-2/3}2^n<M$, we conclude that $\mu=O(n^{1/3}).$
\end{proof}

For $(a,b)\in I$, let $\phi(a,b)$ be the set of indices $r$ such that $A_{r}\cap X_{a,b}\neq\emptyset$. Say that the index $(a,b)\in I$ is \emph{whole} if $|\phi(a,b)|=1$, and say that $(a,b)$ is \emph{shattered} otherwise. In other words, $(a,b)$ is whole if $X_{a,b}$ is completely contained in a level, and shattered otherwise. Clearly, the number of shattered indices in $I$ is at most $C_{0}$ as $X_{a,b}$ is shattered if there exists $r$ such that $X_{a,b}$ contains the last point of $A_{r}$ and the first point of $A_{r+1}$ with respect to $\prec$. 

 The proof of the following claim is rather technical and does not add much to the reader's understanding of the paper, hence we have moved it to the Appendix.
 
\begin{claim}\label{claim:calc}
Let $1\leq a\leq k$ and $a\leq b<b'\leq k$. Then $\phi(a,b)$ and $\phi(a,b')$ are disjoint.
\end{claim}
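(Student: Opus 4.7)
The plan is to reduce to the single-step case $b'=b+1$ by transitivity (if $\phi_{\max}(a,b)<\phi_{\min}(a,b+1)$ for every valid $b$, iterating gives $\phi_{\max}(a,b)<\phi_{\min}(a,b')$ for every $b<b'$), and then establish the single-step statement by explicit position-counting in $T$. Setting $l=b-a$, the intervals placed strictly between $X_{a,b}$ and $X_{a,b+1}$ under the order $\prec'$ are precisely the tail of diagonal $l$, namely $X_{c,c+l}$ for $c=a+1,\dots,k-l$, and the prefix of diagonal $l+1$, namely $X_{c,c+l+1}$ for $c=1,\dots,a-1$. Telescoping the sizes $|A_{c-1}|-|A_c|$, the total number of such elements is
\[
L \;=\; M - (|A_{a-1}|-|A_a|) - |A_{k-l}|.
\]

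Let $E_{a,b}$ be the position in $T$ of the last element of $X_{a,b}$, let $r_2=\phi_{\max}(a,b)$ (the level of $E_{a,b}$), and let $R$ be the number of elements of $A_{r_2}$ that come strictly after $X_{a,b}$ in $\prec$. Since $T$ is ordered first by level and then by the random order within each level, both $\phi(a,b)$ and $\phi(a,b+1)$ are intervals of level indices, so the disjointness claim is equivalent to $\phi_{\min}(a,b+1)>r_2$, i.e.\ to the first element of $X_{a,b+1}$ lying in a level strictly after $A_{r_2}$. Since $X_{a,b+1}$ starts $L+1$ positions after $E_{a,b}$ and the remaining piece of $A_{r_2}$ has length $R$, this is equivalent to $L\geq R$.

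It remains to prove $L\geq R$. The hypothesis $(a,b+1)\in I$ combined with Claim~\ref{claim:missingblocks} gives $l+1\leq k-\mu$, so $|A_{k-l}|\leq|A_{\mu+1}|$ with $\mu=O(n^{1/3})$, yielding the margin $M-|A_{k-l}|=\Omega(Mn^{-1/3})$. Simultaneously, $r_2$ is determined by $l$ and $a$ via the condition $E_{a,b}\in(p(r_2-k-1),p(r_2-k)]$ (where $p(r)$ denotes the end of level $A_{k+r}$ in $T$), and pushing $l$ close to $k-\mu$ forces $r_2$ to grow so that $|A_{r_2}|$ becomes correspondingly small. Bounding $R\leq|A_{r_2}|$ reduces the claim to the inequality
\[
|A_{k-l}| + |A_{r_2}| + (|A_{a-1}|-|A_a|) \;\leq\; M,
\]
which I would verify using the Gaussian approximation $|A_j|\approx Me^{-2j^2/n}$ from Claim~\ref{claim:binomial}(2) together with the estimates in parts~(4)--(6).

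The main obstacle will be this binomial inequality in the regime where $l$ is close to its maximum $k-\mu-1$: there $M-|A_{k-l}|$ shrinks to $\Theta(Mn^{-1/3})$, and one has to show that the constraint linking $l$ and $r_2$ forces $|A_{r_2}|$ strictly below this margin, with enough slack to absorb the lower-order term $|A_{a-1}|-|A_a|=O(M/\sqrt{n})$ (which is smaller than $Mn^{-1/3}$ for $a\leq k=O(\sqrt{n})$). The continuous-limit version of the constraint, $\lambda=\int_{c-\lambda}^{c+\rho}e^{-2u^2}\,du$ with $c=\sqrt{\pi/8}$, $\lambda=l/\sqrt{n}$, $\rho=(r_2-k)/\sqrt{n}$, yields $e^{-2(c-\lambda)^2}+e^{-2(c+\rho)^2}<1$ with gap $\Omega((c-\lambda)^2)\geq\Omega(n^{-1/3})$; Claim~\ref{claim:binomial} is precisely what is needed to make this rigorous at the required precision.
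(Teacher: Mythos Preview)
Your reduction to the inequality $|A_{k-l}| + |A_{r_2}| + (|A_{a-1}|-|A_a|) \le M$ (equivalently, in the continuous limit, $e^{-2(c-\lambda)^2}+e^{-2(c+\rho)^2}<1$ subject to $\lambda=\int_{c-\lambda}^{c+\rho}e^{-2u^2}\,du$) is correct and is exactly the paper's strategy. The paper's proof follows the same route: reduce to consecutive $b,b+1$, compare the gap between $X_{a,b}$ and $X_{a,b+1}$ in $T$ with the relevant level $|A_{r_2}|$, and arrive at precisely this Gaussian inequality (their~(\ref{eq:des}), with an extra factor $1.01$ to absorb the $(1+o(1))$ errors). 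Your telescoped formula for $L$ is in fact cleaner than the paper's $(1+o(1))$ bookkeeping.

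Where you part from the paper is in the verification, which you underestimate. First, a small slip: you invoke $\mu=O(n^{1/3})$ from Claim~\ref{claim:missingblocks} to deduce the margin $M-|A_{k-l}|=\Omega(Mn^{-1/3})$, but this needs the \emph{lower} bound $\mu=\Omega(n^{1/3})$; that is true (it follows from $(k+1)M-|B|=\Theta(M)$ by the same computation as in Claim~\ref{claim:missingblocks}), but it is not what the paper states. More substantially, your asserted uniform gap $\Omega((c-\lambda)^2)$ is the heart of the matter and is not a routine consequence of Claim~\ref{claim:binomial}. At $\lambda=0$ one has $1-2e^{-\pi/4}\approx 0.088$, so the inequality is genuinely tight in the bulk; the paper handles this range ($r\le 3.8$ in their parametrization) by reducing to $df/dr<0.98$ for the implicit function $f$ and verifying that via an explicit numerical table of twelve interpolation points. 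For the tail regime ($r\ge 3.8$, your $\lambda\to c$), the $(1+o(1))$ approximation of Claim~\ref{claim:binomial}(2) is too crude to control a gap of order $(c-\lambda)^2=O(n^{-1/3})$; the paper instead uses the sharper bounds of Claim~\ref{claim:binomial}(4) and~(6) with explicit constants to close the argument. Your outline is sound, but the final paragraph hides essentially all of the work.
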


\emph{Remark.} This claim is quite important for our proof to work, and it seems more of a coincidence that it is actually true, rather than having some combinatorial reason behind it. To prove the claim, we do delicate calculations with binomial coefficients, which the interested reader can find in the Appendix.

For $a=1,\dots,k$, let $$K_{a}=\bigcup_{\substack{b : (a,b)\in I\\ (a,b)\mbox{\footnotesize\ is whole}}} X_{a,b}.$$ Then $K_{a}$ is the union of $|A_{a-1}|-|A_{a}|$ sized random subsets of distinct levels, where the fact that these levels are distinct follows from Claim \ref{claim:calc}. In what comes, we would like to partition $K_{a}$ into roughly $|A_{a-1}|-|A_{a}|$ chains, most of them of size $\approx k-a$. In order to do this, it is enough to show that the size of the largest antichain of $K_{a}$ is not much larger than $|A_{a-1}|-|A_{a}|$. To bound the size of this largest antichain, we use the celebrated container method. The \emph{graph} container method, which we will use in the present work, dates back to works of Kleitman and Winston~\cite{kw1,kw2} from more than 30 years ago; for more recent applications see~\cite{btw,wojteksurvey}.  We will use a multi-stage version of the method, this idea has first appeared in~\cite{BSperner}.

\subsection{Containers}
In this section, we construct a small family  $\mathcal{C}$ of subsets of $T$, which we shall refer to as \emph{containers}, such that every antichain of $T$ is contained in some element of $\mathcal{C}$, and each $C\in\mathcal{C}$ has small mass, where we use the following notion of mass.

If $\mathcal{F}\subset 2^{[n]}$, the \emph{Lubell-mass} of $\mathcal{F}$ is 
$$\ell(\mathcal{F})=\sum_{x\in \mathcal{F}}\frac{1}{\binom{n}{|x|}}.$$

Next, we show that any family of large Lubell-mass must contain an element that is comparable to many other elements. 

\begin{claim}\label{claim:maxdeg}
Let $\delta>0$, $r$ is positive integer and let $\mathcal{F}\subset B$ such that $\ell(\mathcal{F})=r+\delta$. Then there exists $x\in \mathcal{F}$ such that $x$ is comparable with at least $\frac{\delta}{(r+\delta)r!}\cdot(\frac{n}{2})^{r}$ elements of $\mathcal{F}$. 
\end{claim}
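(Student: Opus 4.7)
I would tackle this via a random-chain averaging argument. Let $C$ be a uniformly random maximal chain in $2^{[n]}$ and put $Y=|\mathcal{F}\cap C|$; Lubell's classical averaging gives $\mathbb{E}[Y]=\ell(\mathcal{F})=r+\delta$. Using the elementary inequality $\binom{Y}{r+1}\geq (Y-r)_{+}$ (valid for every non-negative integer $Y$), this yields
$$\mathbb{E}\!\left[\binom{Y}{r+1}\right]\;\geq\;\mathbb{E}[(Y-r)_{+}]\;\geq\;\mathbb{E}[Y]-r\;=\;\delta,$$
and the left-hand side is exactly the expected number of $(r+1)$-element subsets of $\mathcal{F}\cap C$, each of which is automatically an $(r+1)$-chain of $\mathcal{F}$.

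Next I would decompose this expectation by grouping every such chain according to its minimum element $x$. Conditional on $x\in C$, the portion of $C$ strictly above $x$ is a uniform random maximal chain of $2^{[n]\setminus x}$, so writing $Y_{x}^{+}$ for the number of elements of $\mathcal{F}$ on that sub-chain,
$$\sum_{x\in\mathcal{F}}\frac{1}{\binom{n}{|x|}}\,\mathbb{E}\!\left[\binom{Y_{x}^{+}}{r}\right]\;=\;\mathbb{E}\!\left[\binom{Y}{r+1}\right]\;\geq\;\delta.$$
Since $\sum_{x}1/\binom{n}{|x|}=r+\delta$, Lubell-weighted averaging supplies an $x\in\mathcal{F}$ with $\mathbb{E}[\binom{Y_{x}^{+}}{r}]\geq \delta/(r+\delta)$.

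The third ingredient is a per-chain probability estimate. For any $r$-chain $y_{1}\subsetneq\cdots\subsetneq y_{r}$ in $\mathcal{F}$ strictly above $x$, the probability that all $r$ of them appear on the random maximal chain of $2^{[n]\setminus x}$ equals $(|y_{1}|-|x|)!\,(|y_{2}|-|y_{1}|)!\cdots(n-|y_{r}|)!/(n-|x|)!$, which is maximised on \emph{tight} chains (consecutive size-gaps) and bounded by $1/(n-|x|)_{r}$. Assuming without loss of generality that $|x|\leq n/2$ (otherwise the symmetric argument with chains below $x$ applies), this is at most $(2/n)^{r}(1+o(1))$. Combined with the averaging step, the total number $N$ of $r$-chains of $\mathcal{F}$ strictly above $x$ satisfies $N\geq \tfrac{\delta}{r+\delta}(n/2)^{r}$.

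The final step converts this chain count into the bound $d(x)\geq \tfrac{\delta}{(r+\delta)r!}(n/2)^{r}$. The key observation is that a tight $r$-chain above $x$ is determined by its top element $y_{r}$ together with the ordering in which the $r$ elements of $y_{r}\setminus x$ are added, so each element of $\mathcal{F}$ above $x$ can be the top of at most $r!$ tight $r$-chains above $x$. Hence the number of distinct top elements---all of which lie in $\mathcal{F}$ and are comparable with $x$---is at least $N/r!$, which yields the claim. I expect the main obstacle to be handling the non-tight chains rigorously: although each such chain has strictly smaller per-chain probability (and so contributes proportionally more to $N$ for a given Lubell mass), bounding the distinct top elements they produce requires stratifying the sum by the gap-profile $(|y_{1}|-|x|,|y_{2}|-|y_{1}|,\ldots,n-|y_{r}|)$ and estimating each stratum's contribution separately, using the binomial coefficient estimates from Claim~\ref{claim:binomial}.
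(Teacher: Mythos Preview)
Your random-chain averaging framework is the same as the paper's, but the object you count is different, and that difference creates a real gap in your final step.

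The paper does \emph{not} count $(r+1)$-chains. It counts ordered pairs $(x,y)$ in $C\cap\mathcal{F}$ with $x\subset y$ and $|y|-|x|\geq r$; on any chain there are at least $|C\cap\mathcal{F}|-r$ such pairs (fix the minimum and observe at most $r-1$ elements lie within distance $r$ of it), so their expected number is at least $\delta$. For each such pair one has
\[
\mathbb{P}(x,y\in C)=\frac{1}{\binom{n}{|y|}}\cdot\frac{1}{\binom{|y|}{|x|}}\leq \frac{r!}{\binom{n}{|y|}}\Bigl(\frac{2}{n}\Bigr)^{r},
\]
because $\mathcal{F}\subset B$ forces $|y|\geq m+r$ and then $\binom{|y|}{|x|}\geq\binom{m+r}{r}\geq(n/2)^{r}/r!$. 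Summing over $y\in\mathcal{F}$, using that each $y$ has at most $\Delta$ partners $x$, and $\sum_{y}1/\binom{n}{|y|}=\ell(\mathcal{F})=r+\delta$, yields $\delta\leq\Delta\, r!(2/n)^{r}(r+\delta)$. The factor $r!$ here comes from the binomial-coefficient estimate on a single pair, not from any chain-to-vertex conversion.

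Your step from ``there are $N\geq\frac{\delta}{r+\delta}(n/2)^{r}$ $r$-chains in $\mathcal{F}$ above $x$'' to ``$d(x)\geq N/r!$'' does not work. If $d(x)$ elements of $\mathcal{F}$ lie above $x$, the number of $r$-chains among them can be as large as $\binom{d(x)}{r}$, so all you can conclude is $d(x)=\Omega(n)$, not $\Omega(n^{r})$. The observation that each top element determines at most $r!$ \emph{tight} $r$-chains is irrelevant, since $N$ counts all $r$-chains; no stratification by gap-profile will recover the missing factor of $n^{r-1}$. There are also two smaller problems: the per-chain probability is not maximised on tight chains (take the first gap equal to $n-|x|-r+1$ and the remaining gaps equal to $1$ to see the inequality reverse), and the hypothesis $|x|\leq n/2$ is vacuous here because $\mathcal{F}\subset B=[n]^{(\geq m)}$ forces $|x|\geq n/2$, so there is no ``symmetric case'' to fall back on.
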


\begin{proof}
For each $x\in \mathcal{F}$, consider the number of elements of $\mathcal{F}$ comparable with $x$, and let $\Delta$ be the maximum of these numbers.

Let $C$ be a maximal chain in $2^{[n]}$ chosen randomly from the uniform distribution. Note that $\mathbb{E}(|C\cap\mathcal{F}|)=\ell(\mathcal{F})=r+\delta$. Let $N$ be the number of pairs $(x,y)$ in  $C\cap \mathcal{F}$ such that $x\subset y$ and $|y|-|x|\geq r$. On one hand, we have $N\geq |C\cap \mathcal{F}|-r$, hence $\mathbb{E}(N)\geq \delta$. On the other hand, if $x,y\in \mathcal{F}$ such that $x\subset y$ and $|y|-|x|\geq r$, then 
$$\mathbb{P}(x,y\in C)=\frac{|x|!(|y|-|x|)!(n-|y|)!}{n!}=\frac{1}{\binom{n}{|y|}}\cdot\frac{1}{\binom{|y|}{|x|}}\leq \frac{1}{\binom{n}{|y|}}\cdot\frac{1}{\binom{m+r}{r}}\leq \frac{r!}{\binom{n}{|y|}}\left(\frac{2}{n}\right)^{r},$$
noting that $|y|\geq m+r\geq \frac{n}{2}+r$. For $y\in \mathcal{F}$, let $D(y)=\{x\in \mathcal{F}:x\subset y, |y|-|x|\geq r\}$. Then we can write
\begin{align*}
    \mathbb{E}(N)&=\sum_{y\in\mathcal{F}}\sum_{x\in D(y)} \mathbb{P}(x,y\in C)\leq \sum_{y\in\mathcal{F}}|D(y)|\frac{r!}{\binom{n}{|y|}}\left(\frac{2}{n}\right)^{r}\\&\leq \Delta r!\left(\frac{2}{n}\right)^{r}\ell(\mathcal{F})=\Delta r!\left(\frac{2}{n}\right)^{r}(r+\delta).
\end{align*}

Comparing the right hand side with the lower bound $\delta\leq \mathbb{E}(N)$, we get the desired bound $\Delta\geq \frac{\delta}{(r+\delta)r!}\cdot(\frac{n}{2})^{r}$.
\end{proof}

Now we are ready to establish our container lemma. In the proof we will use the above claim only for $r=1,2$. 

\begin{lemma}\label{lemma:container}
There exists a family $\mathcal{C}$ of subsets of $T$ such that 
\begin{enumerate}
    \item $|\mathcal{C}|\leq 2^{2^{n}n^{-3/2+o(1)}}$,
    \item for every $C\in\mathcal{C}$, we have $\ell(C)\leq 1+n^{-1/3+o(1)}$,
    \item if $I$ is an antichain in $T$, then there exists $C\in\mathcal{C}$ such that $I\subset C$.
\end{enumerate}
\end{lemma}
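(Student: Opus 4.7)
The plan is to run a two-stage Kleitman--Winston style container algorithm in which Claim~\ref{claim:maxdeg} plays the role usually played by a minimum degree hypothesis. For an antichain $I\subset T$, initialize $\mathcal{A}=T$ and $F=\emptyset$; while $\ell(\mathcal{A})>1+n^{-1/3+o(1)}$, pick a vertex $v\in\mathcal{A}$ with the maximum number of comparable members in $\mathcal{A}$ and, if $v\in I$, put $v$ into $F$ and remove $v$ together with all of its comparable neighbors from $\mathcal{A}$, otherwise just remove $v$ from $\mathcal{A}$. Output $C=F\cup\mathcal{A}$. Since $I$ is an antichain no element of $I$ is ever discarded, giving $I\subset C$ and hence~(3); and since the algorithm is deterministic once $F$ is known, the container is a function of $F\subset T$, so $|\mathcal{C}|\le\binom{|T|}{\le|F|_{\max}}$, where $|F|_{\max}$ is the largest fingerprint produced.

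To bound $|F|_{\max}$ I would split the run into two stages. In the first stage, while $\ell(\mathcal{A})>3$, Claim~\ref{claim:maxdeg} with $r=2$ (and $\delta\ge 1$) produces a vertex comparable with $\Omega(n^2)$ others, each of Lubell-mass at least $1/M$, so every ``yes'' step destroys mass $\Omega(n^2/M)=\Omega(n^{5/2}/2^n)$; since $\ell(T)\le C_0=O(\sqrt{n\log n})$, this stage contributes at most $O(2^nn^{-2+o(1)})$ fingerprint elements. In the second stage, $\ell(\mathcal{A})\in(1+n^{-1/3+o(1)},3]$, and I would slice the excess $\delta=\ell(\mathcal{A})-1$ into $O(\log n)$ dyadic windows $[\delta_j,2\delta_j]$ with $\delta_j=2^{-j}$ running down to $n^{-1/3+o(1)}$. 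In the $j$-th window Claim~\ref{claim:maxdeg} with $r=1$ furnishes a vertex comparable with $\Omega(n\delta_j)$ others, so each yes step removes mass $\Omega(n\delta_j/M)$, and since only $\delta_j$ of mass needs to be cleared the window contributes $O(M/n)$ fingerprint elements. Summing gives $|F|_{\max}\le O(2^nn^{-2+o(1)})+O((M/n)\log n)=2^nn^{-3/2+o(1)}$, and with $|T|\le 2^n\cdot n^{o(1)}$, the estimate $\binom{|T|}{\le s}\le(e|T|/s)^s$ delivers~(1).

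For~(2), on termination $\ell(\mathcal{A})\le 1+n^{-1/3+o(1)}$ directly, so it remains only to bound $\ell(F)$. Every $v\in F$ lies in some level $A_i$ with $i\le C_0$, hence $\binom{n}{|v|}\ge\binom{n}{m+C_0}=Mn^{-2/3+o(1)}$ by Claim~\ref{claim:binomial}(2), which gives $\ell(F)\le|F|_{\max}\cdot n^{2/3+o(1)}/M=n^{-1/3+o(1)}$, as required.

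The main obstacle is the analysis of the second stage: a single-pass mass argument applied with the stopping threshold $\delta=n^{-1/3+o(1)}$ only yields the weaker bound $|F|=O(M/n^{2/3})$, which is insufficient. It is essential to re-apply Claim~\ref{claim:maxdeg} with the current shrinking value of $\delta$ in each dyadic window; this is where the ``weighted'' variant of the container method enters, and it is what converts the naive $n^{-1/6}$ savings into the full $n^{-3/2+o(1)}$ savings demanded by the target bound.
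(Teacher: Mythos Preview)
Your proposal is correct and follows essentially the same approach as the paper: a Kleitman--Winston algorithm with a coarse first stage using Claim~\ref{claim:maxdeg} with $r=2$, followed by a fine stage sliced into $O(\log n)$ dyadic windows using $r=1$, and the bound on $\ell(F)$ via $\binom{n}{m+C_0}\ge Mn^{-2/3+o(1)}$. The only cosmetic differences are that the paper tracks vertex counts rather than Lubell-mass in the dyadic phases (equivalent, since every vertex has mass $\ge 1/M$) and halts at threshold $1+n^{-1/2}$ rather than $1+n^{-1/3+o(1)}$; either choice works because the $\ell(F)$ term already forces the $n^{-1/3+o(1)}$ in~(2).
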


\begin{proof}
  Let $G$ be the comparability graph of $T$, and let $<$ be an arbitrary total ordering on $T$. Let $I$ be an antichain of $T$. We build a container containing $I$ with the help of the following algorithm. 
  
  \begin{description}
      \item[Step 0] Set $S_{0}:=\emptyset$ and $G_{0}:=G$.
      \item[Step $i$] Let $v_{i}$ be the smallest vertex (with respect to $<$) of $G_{i-1}$  with maximum degree. If $\ell(G_{i-1})\geq 1+n^{-1/2}$, then consider two cases. 
 \begin{itemize}
     \item if $v_{i}\not\in I$, then let $G_{i}:=G_{i-1}\setminus\{v_{i}\}$, $S_{i}:=S_{i-1}$ and proceed to step $i+1$,
     \item if $v_{i}\in I$, then let $S_{i}:=S_{i-1}\cup \{v_{i}\}$ and $G_{i}:=G_{i-1}\setminus (\{v_{i}\}\cup N_{G_{i-1}}(v_{i}))$, and proceed to step $i+1$.
 \end{itemize}
 On the other hand, if $\ell(G_{i-1})<1+n^{-1/2}$, then set $S=S_{i-1}$, $f(S)=V(G_{i-1})$ and terminate the algorithm.
  \end{description}
  
   Call the set $S$ a \emph{fingerprint}. Note that $V(G_{i-1})$ only depends on $S$, so the function $f$ is properly defined on the set of fingerprints. Finally, set $C=S\cup f(S)$, then $C$ contains $I$. Let $\mathcal{C}$ be the family of the sets $C$ for every independent set $I$. 
 
 Now let us estimate the size of $S$. We study our algorithm by dividing the steps into phases depending on $\ell(V(G_{i}))$.
 
 \begin{description}
     \item[Phase -1] This phase consists of those steps $i$ for which $\ell(V(G_{i}))\geq 3$, and let $i'$ be the last step in this phase. In every such step, the maximum degree of $V(G_{i})$ is at least $\frac{n^{2}}{24}$ by Claim~\ref{claim:maxdeg} (with $r=2$ and $\delta=1$). If we added $v_{i}$ to $S_{i-1}$, then we have $|V(G_{i})|\leq |V(G_{i-1})|-\frac{n^{2}}{24}$, which means that $|V(G_{i'})|\leq 2^{n}-\frac{|S_{i'}|n^{2}}{24}$. Therefore, $|S_{i'}|\leq \frac{24\cdot 2^{n}}{n^{2}}.$ Let $T_{-1}=S_{i'}$.
     
     \item[Phase 0] This phase consists of those steps $i$ for which $ 3>\ell(V(G_{i-1}))\geq 2$, and let $i_{0}$ be the last step of this phase. Also, let $T_{0}=S_{i_{0}}\setminus T_{-1}$, the set of elements we added to $S$ during this phase. In this phase, we have $|V(G_{i-1})|\leq 3M$ and by 
     Claim~\ref{claim:maxdeg} (with $r=1$ and $\delta=1$), the maximum degree of $V(G_{i-1})$ is at least $\frac{n}{4}$. If $v_{i}\in I$, then we have $|V(G_{i})|\leq |V(G_{i-1})|-\frac{n}{4}$, which means that $|V(G_{i_{0}})|\leq 3M-|T_{0}|\frac{n}{4}$. Therefore, $|T_{0}|\leq \frac{12M}{n}<12\cdot 2^{n}n^{-3/2}.$
     
     \item[Phase r]  For $r=1,\dots,\frac{1}{2}\log_{2} n$, phase $r$ consists of those steps $i$ for which $1+\frac{1}{2^{r-1}}>\ell(V(G_{i-1}))\geq 1+\frac{1}{2^{r}}$. Let $i_{r}$ be the last step of phase $r$ and let $T_{r}=S_{i_{r}}\setminus S_{i_{r-1}}$, the set of elements we added to $S$ during phase $r$. By Claim~\ref{claim:maxdeg} (with $r=1$ and $\delta=\frac{1}{2^{r}}$), the maximum degree of $V(G_{i-1})$ is at least $\frac{n}{2^{r+2}}$. Also, $\ell(V(G_{i_{r-1}})\setminus V(G_{i_{r}}))\leq \frac{1}{2^{r}}$, so $|V(G_{i_{r-1}})\setminus V(G_{i_{r}})|\leq \frac{M}{2^{r}}$. Moreover, $|V(G_{i_{r}})|\leq |V(G_{i_{r-1}})|-|T_{r}|\frac{n}{2^{r+2}}$, which gives
 $$|T_{r}|\leq \frac{4M}{n}\leq \frac{4\cdot 2^{n}}{n^{3/2}}.$$
 \end{description}
 Therefore, in the end of the process, we get $$|S|=\sum_{r=-1}^{\frac{1}{2}\log_{2} n} |T_{r}|\leq \frac{3\cdot 2^{n}\log_{2} n}{n^{3/2}}.$$ Hence, there are at most $$\binom{2^{n}}{3\cdot 2^{n}n^{-3/2}\log_{2} n}=2^{2^{n}n^{-3/2+o(1)}}$$ fingerprints, which is also an upper bound for $|\mathcal{C}|$.  It only remains to bound $\ell(C)$. Recall that $T$ contains only sets of size at most $m+C_0$, $\binom{n}{m+C_{0}}=(1+o(1))n^{-2/3} M$ and $M \leq O(2^n/\sqrt{n})$. Thus we have 
 $$\ell(C)<\ell(f(S))+\ell(S)\leq 1+n^{-1/2}+\frac{|S|}{\binom{n}{m+C_{0}}}\leq 1+n^{-1/2}+(1+o(1))n^{2/3}\frac{|S|}{M}.$$ 
 Here, $n^{2/3}\frac{|S|}{M}=O(n^{-1/3}\log n)$, so $\ell(C)\leq 1+O(n^{-1/3}\log n)$.
  
\end{proof}

\subsection{Antichains}

The aim of this section is to bound the size of the maximal antichain in $K_{a}$. Recall that for $(a,b)\in I$,  $\phi(a,b)$ is the set of indices $r$ such that $A_{r}\cap X_{a,b}\neq\emptyset$, and  
$$K_{a}=\bigcup_{\substack{b: (a,b)\in I\\(a,b)\mbox{\footnotesize\ is whole}}} X_{a,b}.$$

\begin{lemma}\label{lemma:antichain}
 Let $a\geq n^{1/10}$. With probability at least $1-2^{-n^2}$, the size of the maximal antichain of $K_{a}$ is $$\left(1+\frac{n^{o(1)}}{\sqrt{a}}\right)(|A_{a-1}|-|A_{a}|).$$
\end{lemma}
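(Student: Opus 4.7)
The plan is to combine Lemma~\ref{lemma:container} with a Chernoff-type concentration bound and a union bound over containers. Since $K_a\subseteq T$, every antichain of $K_a$ is also an antichain of $T$, and so by property~(3) of Lemma~\ref{lemma:container} it lies inside some $C\in\mathcal{C}$. Writing $L_a:=|A_{a-1}|-|A_a|$, it therefore suffices to establish, for every fixed $C\in\mathcal{C}$, that
$$\mathbb{P}\bigl(|K_a\cap C|>(1+\eta)L_a\bigr)\le 2^{-n^2}/|\mathcal{C}|$$
with $\eta=n^{o(1)}/\sqrt{a}$, and then sum the failure probability over $C\in\mathcal{C}$.

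The key structural step in controlling $|K_a\cap C|$ is Claim~\ref{claim:calc}: for each $b$ with $(a,b)\in I$ whole, the block $X_{a,b}$ lies in a single level $A_{r(b)}$, and as $b$ varies these indices $r(b)$ are pairwise distinct. Because the random orders $\prec_r$ are independent across levels and, marginally, $X_{a,b}$ is a uniformly random size-$L_a$ subset of $A_{r(b)}$, the values $|X_{a,b}\cap C|$ are independent hypergeometric random variables. Combining this with the Lubell-mass bound from property~(2) of Lemma~\ref{lemma:container}, linearity of expectation gives
$$\mathbb{E}\bigl[|K_a\cap C|\bigr]=\sum_b L_a\cdot\frac{|A_{r(b)}\cap C|}{|A_{r(b)}|}\le L_a\,\ell(C)\le L_a\bigl(1+n^{-1/3+o(1)}\bigr),$$
where the first inequality uses that the $r(b)$ are distinct.

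Since each $|X_{a,b}\cap C|$ is a sum of negatively associated indicators (uniform random subset indicators within each level, independent across levels), the multiplicative Chernoff bound of Claim~\ref{claim:chernoff} applies to $|K_a\cap C|$. Choose $\eta=n^{\epsilon(n)}/\sqrt{a}$ with $\epsilon(n)\to 0$ slightly slower than the $o(1)$ appearing in $\log_2|\mathcal{C}|\le 2^n n^{-3/2+o(1)}$. Because the implicit range $a\le k=O(\sqrt{n})$ gives $1/\sqrt{a}\ge\Omega(n^{-1/4})$, which dominates $n^{-1/3+o(1)}$, the deviation parameter $\delta$ needed to push the mean up to $(1+\eta)L_a$ satisfies $\delta\ge\eta/2$. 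Using $L_a=\Theta(aM/n)=\Theta(a\cdot 2^n/n^{3/2})$ from Claim~\ref{claim:binomial}(5), the Chernoff exponent is $\Omega\bigl(n^{2\epsilon(n)}\cdot 2^n/n^{3/2}\bigr)$ when $\mathbb{E}[|K_a\cap C|]\ge L_a/2$; in the complementary regime $\mathbb{E}[|K_a\cap C|]<L_a/2$ the required deviation is of order $L_a/2$, and Chernoff produces an exponent of order $L_a=\Omega(2^n/n^{7/5})$ using $a\ge n^{1/10}$. In either case the exponent beats $\log|\mathcal{C}|$ and $n^2\ln 2$, and the union bound closes.

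The main technical obstacle is the delicate calibration of three subpolynomial error terms: the Lubell-mass slack $n^{-1/3+o(1)}$ intrinsic to the container lemma, the subpolynomial factor hidden in $|\mathcal{C}|$, and the target error $n^{o(1)}/\sqrt{a}$ in the conclusion. The hypothesis $a\ge n^{1/10}$ is precisely what makes $L_a$ large enough for the Chernoff exponent to overwhelm $\log|\mathcal{C}|$, while the implicit bound $a\le k=O(\sqrt{n})$ ensures $1/\sqrt{a}$ dominates $n^{-1/3+o(1)}$, so that the advertised error in the lemma statement genuinely reflects the concentration of the random blocks rather than the quality of the containers.
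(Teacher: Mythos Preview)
Your approach mirrors the paper's: every antichain in $K_a$ sits in some container $C$, you bound $\mathbb{E}[|K_a\cap C|]$ via the Lubell-mass estimate $\ell(C)\le 1+n^{-1/3+o(1)}$, apply Chernoff, and union-bound over $\mathcal{C}$. The expectation computation, the case split on the size of $\mathbb{E}[|K_a\cap C|]$, and the calibration of $\eta$ against $\log|\mathcal{C}|$ and $L_a=\Theta(a\cdot 2^n n^{-3/2})$ all match the paper's calculations essentially line for line.

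The one technical divergence is in how Chernoff is invoked. You apply it directly to $|K_a\cap C|$, arguing that the relevant indicators are negatively associated (hypergeometric within each level, independent across levels). This is mathematically valid, but Claim~\ref{claim:chernoff} as stated covers only \emph{independent} Bernoulli variables, so strictly speaking you are appealing to a stronger (though standard) result not provided in the paper. The paper sidesteps this by a Poissonization trick: it replaces each $X_{a,b}$ by a set $Y_{a,b}$ obtained via independent $p_b$-coin flips in $A_{r_b}$, so that the vanilla Chernoff of Claim~\ref{claim:chernoff} applies to $|D\cap C|$ with $D=\bigcup_b Y_{a,b}$, obtaining failure probability $2^{-2n^2}$ after the union bound; it then transfers the bound back to $K_a$ by conditioning on the event $\bigcap_b\{|Y_{a,b}|=A\}$, which has probability at least $2^{-n|E|}>2^{-n^2}$, paying a $2^{n^2}$ factor in the failure probability. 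Your route is shorter and avoids this conditioning step; the paper's route stays within the toolkit it has explicitly set up.
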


\begin{proof}
 Let $A=|A_{a-1}|-|A_{a}|$, then by Claim~\ref{claim:binomial} (5), we have $A=\Theta(a2^{n}n^{-3/2})$. Let $E$ be the set of indices $b$ such that $(a,b)\in I$ and $(a,b)$ is whole. If $b\in E$, let $r_b$ be the unique index such that $X_{a,b}\subset A_{r_{b}}$. Then $X_{a,b}$ is an $A$ element subset of $A_{r_b}$, chosen from the uniform distribution on all $A$ element subsets. Also, as the sets $\phi(a,b)$ for $b\in E$ are pairwise disjoint by Claim \ref{claim:calc}, the system of random variables $\{X_{a,b}:b\in E\}$ is independent. 

Instead of $X_{a,b}$, it is more convenient to work with the set $Y_{a,b}$ which we get by selecting each element of $A_{r_b}$ independently with probability $p_b=\frac{A}{|A_{r_b}|}$. Indeed, $X_{a,b}=Y_{a,b}|(|Y_{a,b}|=A)$, and $$\mathbb{P}(|Y_{a,b}|=A)=p_b^A(1-p_b)^{|A_{r_b}|-A}\binom{|A_{r_b}|}{A}>\frac{1}{|A_{r_b}|}>2^{-n},$$where the second to last inequality can be seen by observing that the function $f(x)=\mathbb{P}(|Y_{a,b}| = x)$ is increasing for $x\leq |A|$ and decreasing for $x\geq |A|$.

Let $D=\bigcup_{b\in E}Y_{a,b}$ and $U=\bigcup_{b\in E}A_{r_b}$. Let $\mathcal{C}$ be the family of containers of $T$ given by Lemma~\ref{lemma:container}. Let $\delta$ be a real number such that $n^{-1/3+1/20}<\delta<1$, let $C\in\mathcal{C}$ and consider the probability that $W=|C\cap D|$ is larger than $A(1+\delta)$. First of all, we have $$\mathbb{E}(W)=\sum_{b\in E}\frac{A|C\cap A_{r_b}|}{|A_{r_b}|}=A\sum_{b\in E}\frac{|C\cap A_{r_{b}}|}{\binom{n}{m+r_{b}}}=A\ell(|C\cap U|)\leq A(1+n^{-1/3+o(1)}).$$

Now let us estimate the probability that $W\geq (1+\delta)A$. Let $\delta'=(1+\delta)\frac{A}{\mathbb{E}(W)}-1$, then $(1+\delta)A=(1+\delta')\mathbb{E}(W)$. Using the property that $\delta>n^{-1/3+1/20}$, we have $\delta'\geq \delta \frac{A}{2\mathbb{E}(W)}$. But $W$ is the sum of Bernoulli random variables, so we can apply Chernoff's inequality (Claim~\ref{claim:chernoff}). Consider two cases: if $\delta'\leq 1$, then
$$\mathbb{P}(W\geq (1+\delta')\mathbb{E}(W))\leq e^{-\frac{(\delta')^2}{3}\mathbb{E}(W)}\leq e^{-\delta^2 \frac{A^2}{12\mathbb{E}(W)}}\leq e^{-\frac{\delta^2 A}{24}},$$
and if 
$\delta'>1$, then 
$$\mathbb{P}(W\geq (1+\delta')\mathbb{E}(W))\leq e^{-\frac{\delta'}{3}\mathbb{E}(W)}\leq e^{-\delta \frac{A}{6}}< e^{-\frac{\delta^2 A}{24}}.$$
Choose $\delta$ such that $e^{-\frac{\delta^2 A}{24}}|\mathcal{C}|=2^{-2n^{2}}$. Since $A=\Theta(a2^{n}n^{-3/2})$ and $|\mathcal{C}|\leq 2^{2^{n}n^{-3/2+o(1)}}$, we have  $\delta=\frac{n^{o(1)}}{\sqrt{a}}$. Note that $n^{-1/3+1/20}<\delta<1$ holds, so the previous calculations are valid for this choice of $\delta$. By the union bound, the probability that there exists $C\in \mathcal{C}$ such that $|C\cap D|\geq (1+\delta)A$ is at most $|\mathcal{C}| e^{-\frac{\delta^2 A}{24}}=2^{-2n^{2}}.$ But every independent set of $U$ is contained in some $C\in\mathcal{C}$, so the probability $q'$ such that $D$ has no independent set of size larger than $(1+\delta)A$ is at most $2^{-2n^{2}}$. 

 Finally, let $q$ be the probability that $K_{a}$ has an independent set larger than $(1+\delta)A$. Then $q$ is equal to the probability that $D$ has an independent set of size $(1+\delta)A$, conditioned on the event that $|Y_{a,b}|=A$ for $b\in E$. But the probability of this event is at least $2^{-n|E|}> 2^{-n^{2}}$, since $|E|\leq k=O(\sqrt{n})$, so $q\leq q'2^{n^{2}}\leq 2^{-n^{2}}$.
\end{proof}

\subsection{Matchings}
By the previous lemma and by Dilworth's theorem \cite{D50}, we know that $K_{a}$ can be partitioned into slightly more than $(|A_{a-1}|-|A_{a}|)$ chains. We would like to attach most of these chains to a chain decomposition of the union of the levels $A_{0}\cup\dots\cup A_{k-1}$. This section is devoted to the following lemma, which deals with this problem. For $a=1,\dots,k$, let $B_a$ be the bipartite graph with vertex classes $A_{a-1}$ and $A_{a}\cup X_{a,a}$, where the edges between the two vertex classes are the comparable pairs. Note that $B_{a}$ is a \emph{balanced} bipartite graph, that is, $|A_{a-1}|=|A_{a}\cup X_{a,a}|$.

\begin{lemma}\label{lemma:matching}
 If $(a,a)$ is whole, then with probability at least $1-2^{-n}$, there exists a matching $M_a$ in $B_a$ such that $M_a$ covers every element of $A_a$, and $M_{a}$ covers all but $O(\frac{2^{n}}{n^{5/4}})$ elements of $X_{a,a}$.
\end{lemma}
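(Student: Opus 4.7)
To prove Lemma \ref{lemma:matching}, I would proceed in two stages, corresponding to the two matching requirements. The first stage constructs a matching $M_1$ saturating $A_a$ using the biregular bipartite containment graph $G_0$ on $A_{a-1}\cup A_a$: each $x\in A_{a-1}$ of size $m+a-1$ has exactly $n-m-a+1$ supersets in $A_a$, and each $y\in A_a$ has $m+a$ subsets in $A_{a-1}$, so by LYM every $T\subseteq A_a$ satisfies $|N_{G_0}(T)|\geq |T|\cdot |A_{a-1}|/|A_a|\geq |T|$. Hall's theorem then yields $M_1$. Define $V_1:=A_{a-1}\setminus V(M_1)$, a subset of $A_{a-1}$ of size $A:=|A_{a-1}|-|A_a|$.

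The second stage is probabilistic: I would show that with probability at least $1-2^{-n}$, the bipartite comparability graph $H:=G[V_1,X_{a,a}]$ admits a matching covering all but $O(2^n/n^{5/4})$ of $X_{a,a}$. Since $(a,a)$ is whole, $X_{a,a}$ is a uniformly random $A$-element subset of a single level $A_r$ (with $r>a$), the randomness coming from the ordering $\prec_r$. By the defect version of Hall's theorem it suffices to bound the maximum deficit $\delta:=\max_{T\subseteq X_{a,a}}(|T|-|N_H(T)|)$ by $O(2^n/n^{5/4})$. I would split on $|T|$: for $|T|\leq O(2^n/n^{5/4})$ the bound is trivial; for larger $|T|$, I would combine LYM expansion $|N_G(T)\cap A_{a-1}|\geq |T|\cdot|A_{a-1}|/|A_r|$ in the biregular graph between $A_{a-1}$ and $A_r$ with hypergeometric concentration for the random set $X_{a,a}$ to show $|N_H(T)|\geq |T|$ except with very small probability.

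The main obstacle will be controlling the union bound over bad subsets $T$, since the number of candidates is far too large for naive enumeration. I anticipate needing either a container-type argument analogous to Lemma \ref{lemma:container} that reduces the enumeration to a small family of representative ``witness'' sets, or exploiting the freedom in choosing $M_1$ (and hence $V_1$) to guarantee that $V_1$ is sufficiently spread out in $A_{a-1}$ so that Hall's condition against $A_r$-type random subsets holds with exponentially high probability. A subtle technical point is the coupling between $V_1$ and $X_{a,a}$: since $M_1$ can be chosen either before or after observing $X_{a,a}$, the analysis can be tuned to minimize dependence issues, either by fixing $V_1$ in advance and absorbing rare bad events into the $2^{-n}$ failure probability, or by selecting $M_1$ adaptively to match the realized $X_{a,a}$.
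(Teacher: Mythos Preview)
Your two-stage decomposition has a genuine gap. In the second stage you need $|N_H(T)| = |N_G(T)\cap V_1| \ge |T|$, but the LYM expansion you invoke only controls $|N_G(T)\cap A_{a-1}|$, and $V_1$ is a tiny, structureless leftover of size $A = |A_{a-1}|-|A_a|$ inside $A_{a-1}$. There is no reason the neighborhood of $T$ should hit $V_1$ proportionally, and the randomness of $X_{a,a}$ does not help here: it randomizes the left side of $H$, not the right side $V_1$. You correctly flag the exponential union bound over $T\subseteq X_{a,a}$ as the obstacle, but neither suggested remedy closes the gap. Containers would be significant overkill and it is unclear what they would contain here; ``choose $M_1$ adaptively'' is the right instinct but, as stated, is not a proof.

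The paper sidesteps both difficulties with one clean idea: do not fix $V_1$ at all, and do not union-bound over subsets $T$. Instead, work with the full bipartite graph $E$ on $A_{a-1}$ versus $A_a\cup X_{a,a}$ and verify a \emph{defect} Hall condition with defect $\Delta=O(2^n/n^{5/4})$. The only random event needed is that every $x\in A_{a-1}$ has at most $(1+\delta)pd$ neighbors in the random set $X_{a,a}$, where $p=A/|A_r|$, $d=\binom{n-m-a+1}{r-a+1}$, and $\delta=O(n^{-3/4})$; this is a union bound over just $|A_{a-1}|$ Chernoff events. Given that, for any $U\subseteq X_{a,a}$ double-counting edges gives $d'|U|\le (1+\delta)pd\,|N_E(U)|$ with $d'=\binom{m+r}{r-a+1}$, whence $|N_E(U)|\ge |U|\frac{|A_{a-1}|}{A}-\delta|A_{a-1}|$; combined with LYM on $A_a$ this yields defect Hall for every $U_0\subseteq A_a\cup X_{a,a}$. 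Finally, since a perfect matching from $A_a$ to $A_{a-1}$ always exists, one upgrades any maximum matching to one that saturates $A_a$ (Claim~\ref{claim:maxmatching}). In short, the paper replaces your exponential union bound over $T$ by a polynomial union bound over vertex degrees, and replaces the lossy two-stage split by a single defect-Hall step followed by a K\H{o}nig--Egerv\'ary-type adjustment.
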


We prepare the proof of this lemma with a number of simple claims, the first one of which is a form of the LYM inequality. 

\begin{claim}\label{claim:normalizedmatching}
Let $i,j\in \{0,\dots,n-m\}$, $i\neq j$. Let $G$ be the bipartite graph with vertex classes $A_i$ and $A_j$ such that the edges of $G$ are the comparable pairs. Then for every $X\subset A_i$, we have $$\frac{|X|}{|A_{i}|}\leq \frac{|N_{G}(X)|}{|A_j|}.$$ 
\end{claim}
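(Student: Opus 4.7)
The plan is to prove this by a straightforward double-counting argument on the edges of $G$, exploiting the regularity of the comparability relation between two fixed levels of the Boolean lattice. By symmetry (swapping subsets and supersets) I may assume $i<j$. Write $l=j-i>0$, so $A_i\subset [n]^{(m+i)}$ and $A_j\subset [n]^{(m+j)}$ and the edges of $G$ join each set to its supersets (or subsets) of size differing by $l$.

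The key observation is that $G$ is \emph{biregular}: every vertex $x\in A_i$ has exactly $d_i=\binom{n-m-i}{l}$ supersets in $A_j$, and every vertex $y\in A_j$ has exactly $d_j=\binom{m+j}{l}$ subsets in $A_i$. A direct computation with factorials shows
\[
|A_i|\,d_i \;=\; \binom{n}{m+i}\binom{n-m-i}{l}\;=\;\frac{n!}{(m+i)!\,l!\,(n-m-j)!}\;=\;\binom{n}{m+j}\binom{m+j}{l}\;=\;|A_j|\,d_j,
\]
which simply records that the total number of edges in $G$ can be counted from either side.

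Now I count edges between $X$ and $N_G(X)$ in two ways. From the $A_i$ side, every $x\in X$ contributes exactly $d_i$ edges, all of which land in $N_G(X)$ by definition, giving $d_i|X|$ edges. From the $A_j$ side, each vertex $y\in N_G(X)$ is incident to at most $d_j$ edges in total, so the number of edges from $N_G(X)$ to $X$ is at most $d_j|N_G(X)|$. Therefore
\[
d_i|X| \;\leq\; d_j|N_G(X)|.
\]
Dividing both sides by $|A_i|d_i=|A_j|d_j$ yields $|X|/|A_i|\leq |N_G(X)|/|A_j|$, as desired.

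There is no real obstacle here; the only minor bookkeeping step is verifying the biregularity identity $|A_i|d_i=|A_j|d_j$, and the case $i>j$ is handled by interchanging the roles of $i$ and $j$ (i.e., counting subsets instead of supersets), which by the same computation yields the analogous identity.
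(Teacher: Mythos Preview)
Your proof is correct and follows essentially the same double-counting argument as the paper: both assume $i<j$, count the edges between $X$ and $N_G(X)$ from each side using the biregularity degrees $\binom{n-m-i}{j-i}$ and $\binom{m+j}{j-i}$, and then convert the resulting inequality into the normalized form. The only cosmetic difference is that you explicitly verify the identity $|A_i|d_i=|A_j|d_j$ before dividing, whereas the paper simply asserts the equivalence.
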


\begin{proof}
  Suppose that $i<j$, the other case can be handled in a similar manner. Let $e$ denote the number of edges between $X$ and $N_{G}(X)$. Counting $e$ from the vertices in $X$, we get $e=|X|\binom{n-m-i}{j-i}$. Counting the edges by the vertices in $N_{G}(X)$, we get $e\leq |N_{G}(X)|\binom{j+m}{j-i}$. Therefore, $|X|\binom{n-m-i}{j-i}\leq |N_{G}(X)|\binom{j+m}{j-i}$, which is equivalent to $\frac{|X|}{|A_{i}|}\leq \frac{|N_{G}(X)|}{|A_j|}.$
\end{proof}

\begin{claim}\label{claim:hall}(Defect version of Hall's theorem~\cite{H35}, see also~\cite{ADH98})
 Let $G$ be a bipartite graph with vertex classes $A$ and $B$, and let $\Delta$ be a positive integer. Suppose that for every $X\subset A$, we have $|N_{G}(X)|\geq |X|-\Delta$. Then $G$ contains a matching of size at least $|A|-\Delta$.
\end{claim}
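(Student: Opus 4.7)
The plan is to reduce the defect version to the standard Hall's theorem by a simple augmentation trick. I would introduce $\Delta$ new auxiliary vertices $b_1,\dots,b_\Delta$, form a new bipartite graph $G'$ on vertex classes $A$ and $B' = B \cup \{b_1,\dots,b_\Delta\}$, and add every edge of the form $\{a, b_j\}$ for $a\in A$ and $1\le j\le \Delta$. The key observation is that any matching in $G'$ saturating $A$ restricts, after deleting the at most $\Delta$ edges using the auxiliary vertices, to a matching in the original graph $G$ of size at least $|A|-\Delta$. So it suffices to verify that $G'$ satisfies Hall's condition with respect to $A$.

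Next I would check Hall's condition for $G'$. Fix $X\subset A$. If $X=\emptyset$, the condition is trivial. If $X\neq\emptyset$, then in $G'$ every vertex of $X$ is joined to all of $b_1,\dots,b_\Delta$, so $N_{G'}(X) \supseteq N_G(X)\cup\{b_1,\dots,b_\Delta\}$, giving
$$|N_{G'}(X)| \;\ge\; |N_G(X)| + \Delta \;\ge\; (|X|-\Delta) + \Delta \;=\; |X|,$$
where I used the hypothesis $|N_G(X)|\ge |X|-\Delta$. Thus Hall's condition holds for every $X\subset A$.

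By the classical Hall marriage theorem applied to $G'$, there exists a matching $M'$ in $G'$ saturating $A$, i.e.\ of size $|A|$. Let $M$ be obtained from $M'$ by deleting every edge incident to one of the auxiliary vertices $b_1,\dots,b_\Delta$. Since at most $\Delta$ edges of $M'$ can be incident to these $\Delta$ vertices (each auxiliary vertex appears in at most one matching edge), we obtain a matching $M\subset E(G)$ with
$$|M| \;\ge\; |M'| - \Delta \;=\; |A| - \Delta,$$
which is the desired conclusion.

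There is essentially no obstacle here; the only thing to be slightly careful about is that the auxiliary construction does not accidentally create parallel edges or alter the neighborhoods within the original bipartition, but since $B\cap\{b_1,\dots,b_\Delta\}=\emptyset$ and the auxiliary edges only go between $A$ and the new vertices, this is immediate. The same argument (in fact a one-line formal proof) appears in standard references such as \cite{ADH98}, so one could alternatively just cite it; I have included the short derivation for completeness.
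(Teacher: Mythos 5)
Your proof is correct. The paper itself does not prove this claim---it simply cites Hall's original paper and the book of Asratian, Denley and H\"{a}ggkvist as standard references---so there is no in-paper argument to compare against. Your augmentation trick (adding $\Delta$ auxiliary vertices joined to all of $A$, verifying Hall's condition in the enlarged graph, applying the ordinary marriage theorem, and then discarding the at most $\Delta$ matching edges that touch auxiliary vertices) is precisely the standard one-line reduction, and your verification of Hall's condition and the final count are both accurate.
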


\begin{claim}\label{claim:levels}
 Let $0\leq i<n-m$ and let $G$ be the bipartite graph with vertex classes $A_{i}$ and $A_{i+1}$ in which the edges are the comparable pairs. Then there exists a complete matching from $A_{i+1}$ to $A_{i}$ for $i=0,\dots,n-m$.
\end{claim}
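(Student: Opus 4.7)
The plan is to deduce Claim~\ref{claim:levels} directly from Hall's theorem combined with the normalized matching property already established as Claim~\ref{claim:normalizedmatching}. Since we are working in the upper half of the Boolean lattice, both $m+i$ and $m+i+1$ lie in the range $\{\lceil n/2\rceil,\dots,n\}$, so the sizes of the levels are monotonically nonincreasing: $|A_{i+1}|\leq |A_{i}|$. Hence the goal is a matching saturating the smaller side $A_{i+1}$.

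First I would invoke Hall's theorem in its standard form: a matching of $G$ saturating $A_{i+1}$ exists if and only if every $Y\subseteq A_{i+1}$ satisfies $|N_G(Y)|\geq |Y|$. This reduces the statement to a pure vertex-expansion estimate.

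Next I would verify the Hall condition using Claim~\ref{claim:normalizedmatching}, applied with the roles of the two sides interchanged (the claim is symmetric in $i$ and $j$). That claim yields
\[
\frac{|Y|}{|A_{i+1}|}\;\leq\;\frac{|N_G(Y)|}{|A_{i}|}
\qquad\text{for every }Y\subseteq A_{i+1}.
\]
Rearranging and using $|A_{i}|\geq |A_{i+1}|$ gives $|N_G(Y)|\geq |Y|\cdot|A_{i}|/|A_{i+1}|\geq |Y|$, so Hall's condition holds and a complete matching from $A_{i+1}$ into $A_{i}$ exists.

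There is essentially no obstacle here: the only two ingredients are Hall's theorem and the LYM-type inequality of Claim~\ref{claim:normalizedmatching}, and both are already available. The one thing to be careful about is which side of the bipartition one is saturating, since in the upper half the larger level is $A_i$ (not $A_{i+1}$ as in the lower half), so the direction of the inequality $|A_i|\geq |A_{i+1}|$ must be used correctly; I would state this explicitly to avoid confusion.
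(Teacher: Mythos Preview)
Your proposal is correct and follows exactly the same route as the paper: apply Claim~\ref{claim:normalizedmatching} to subsets of the smaller side $A_{i+1}$, use $|A_i|\geq|A_{i+1}|$ to verify Hall's condition, and conclude via Hall's theorem. (In fact your write-up is slightly cleaner, as the paper's version contains a harmless typo, writing $X\subset A_i$ where $X\subset A_{i+1}$ is meant.)
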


\begin{proof}
  This follows easily from Claim~\ref{claim:normalizedmatching} and Hall's theorem (Claim~\ref{claim:hall} with $\Delta=0$). Indeed, for every $X\subset A_{i}$, we have $|N_{G}(X)|\geq \frac{|A_{i}|}{|A_{i+1}|}|X|\geq |X|$, so Hall's condition is satisfied. Therefore, there exists a matching of size $|A_{i+1}|$. 
\end{proof}

\begin{corollary}\label{cor:leftover}
Let $T'=[n]^{(\geq m+k)}$. Then $T'$ can be partitioned into $|A_{k}|$ chains.
\end{corollary}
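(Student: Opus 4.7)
The plan is to construct the partition by iteratively applying Claim~\ref{claim:levels} between consecutive levels of the upper half. For each $i = k, k+1, \dots, n-m-1$, the claim yields a complete matching from the smaller level $A_{i+1}$ into the larger level $A_{i}$, which I will record as an injection $f_i \colon A_{i+1} \to A_i$ satisfying $f_i(y) \subset y$ for every $y \in A_{i+1}$.

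With these injections in hand, for each $y \in T'$ lying in level $A_j$ with $j > k$, I would form the descent sequence
$$y,\ f_{j-1}(y),\ f_{j-2}(f_{j-1}(y)),\ \dots$$
which terminates at a unique element of $A_k$. Grouping all elements of $T'$ whose descents land at the same $x \in A_k$ produces a chain rooted at $x$, since consecutive entries of the sequence are comparable by construction of the $f_i$. The injectivity of each $f_i$ guarantees that distinct chains are disjoint, and every element of $T'$ lies in exactly one chain by the uniqueness of its descent trajectory. This yields a partition of $T'$ into exactly $|A_k|$ chains, one per element of $A_k$.

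The only point to verify is that Claim~\ref{claim:levels} can indeed be applied in the stated direction (from the higher level $A_{i+1}$ into the lower level $A_{i}$) for every $i \geq k$. This is immediate: we have $|A_{i+1}| \leq |A_i|$ throughout the upper half, and the normalized matching property used in the proof of Claim~\ref{claim:levels} supplies Hall's condition in precisely this orientation. There is thus no real obstacle; the remainder of the argument is routine bookkeeping once the family of matchings is fixed.
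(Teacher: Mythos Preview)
Your proof is correct and is essentially the same as the paper's: both fix complete matchings $M_i$ from $A_{i+1}$ to $A_i$ for $i\ge k$ via Claim~\ref{claim:levels} and read off the chains, with the only cosmetic difference that the paper traces each chain upward from its root $x\in A_k$, while you trace downward from each $y$ to its root. One small point worth making explicit is why the fibre over a given $x$ is totally ordered (not merely a union of descending chains): injectivity of each $f_i$ forces at most one element per level to descend to $x$, so the fibre is in fact a single chain---this is implicit in your appeal to injectivity but not quite stated.
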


\begin{proof}
For $i=k,\dots,n-m-1$, let $M_{i}$ be a complete matching from $A_{i+1}$ to $A_{i}$. For $x\in A_{k}$, let $C_{x}$ be the chain with elements $x=x_{0}\subset\dots\subset x_{l}$, where $x_{j}$ is matched to $x_{j+1}$ in $M_{k+j}$ for $j=0,\dots,l-1$, and $x_{l}$ is not covered by the matching $M_{k+l}$. Then $\{C_{x}\}_{x\in A_{k}}$ is a chain partition of $T'$ into $|A_{k}|$ chains.  
\end{proof}

\begin{claim}\label{claim:maxmatching}(see for example~\cite{ADH98})
 Let $G$ be a bipartite graph and $M$ be a matching in $G$. Then there exists a maximal sized matching $M'$ in $G$ such that $V(M)\subset V(M')$.
\end{claim}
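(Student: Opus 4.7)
The plan is to fix any maximum matching $N$ of $G$ and then massage it into a new maximum matching $M'$ with $V(M)\subset V(M')$, using only local modifications along alternating paths of the symmetric difference $M\triangle N$.

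First, I would analyse the structure of $H:=M\triangle N$. Since $M$ and $N$ are matchings, every vertex has degree at most $2$ in $M\cup N$, hence in $H$. So the connected components of $H$ are paths and cycles whose edges alternate between $M$ and $N$. As $G$ is bipartite, every alternating cycle has even length, hence contains equally many $M$-edges and $N$-edges. The key structural fact is that no path component of $H$ has both endpoints uncovered by $N$, because such a path would be an $N$-augmenting path, contradicting the maximality of $N$. Consequently, each path component $P$ of $H$ is of one of two types: (a) both endpoints of $P$ are covered by $N$ but not by $M$, or (b) exactly one endpoint is covered by $M$ but not by $N$, in which case $P$ has even length and so the numbers of $M$-edges and $N$-edges in $P$ coincide.

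Now I would define $M'$ by flipping $N$ along every type-(b) component, i.e.\ by replacing the $N$-edges of such a path by the $M$-edges of the same path. Because each flip preserves the number of edges, $M'$ is a matching of the same size as $N$, hence still maximum. It remains to verify that $V(M)\subset V(M')$. Fix $v\in V(M)$ and consider three cases. If $v$ does not lie on any component of $H$, then the edge of $M$ at $v$ equals the edge of $N$ at $v$, so $v\in V(N)\subset V(M')$. If $v\in V(M)\cap V(N)$ and $v$ lies on a component of $H$, then $v$ is an interior vertex of that component; the flip (if performed) exchanges the two edges incident to $v$ inside $M\cup N$, so $v$ remains covered in $M'$. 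Finally, if $v\in V(M)\setminus V(N)$, then $v$ is an endpoint of a path component of $H$, which must be of type (b); the flip along this component covers $v$ in $M'$.

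The argument is essentially standard bookkeeping on the symmetric difference of two matchings and contains no genuine obstacle; the only care needed is in the three-case verification of $v\in V(M')$ above, which relies crucially on the exclusion of augmenting paths guaranteed by the maximality of $N$.
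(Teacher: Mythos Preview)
Your argument is correct and is the standard symmetric-difference proof of this fact. The paper itself does not give a proof of this claim at all; it merely cites the textbook~\cite{ADH98}, so there is nothing substantive to compare. One small remark: you invoke bipartiteness to deduce that alternating cycles in $M\triangle N$ are even, but this is automatic for any graph, since consecutive edges on such a cycle alternate between $M$ and $N$; the claim and your proof both hold for arbitrary graphs.
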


\begin{claim}\label{claim:k+1-k+2}
  $X_{a,a}\subset (A_{k+1}\cup A_{k+2})$, and in particular, $X_{k-1,k-1},X_{k,k}\subset A_{k+2}$.
\end{claim}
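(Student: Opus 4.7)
The plan is to unfold the definition of the intervals $X_{a,a}$ and reduce the statement to two numerical inequalities on binomial coefficients. By the ordering $\prec'$ on $I^{*}$, the sets $X_{1,1}, X_{2,2}, \dots, X_{k,k}$ (the entire zeroth diagonal) are the first intervals cut out of $(T,\prec)$, in that order. Since $|X_{a,a}|=|A_{a-1}|-|A_{a}|$, the block $X_{a,a}$ occupies the positions $|A_0|-|A_{a-1}|+1$ through $|A_0|-|A_{a}|$ of $T$. The total order $\prec$ on $T$ places all of $A_{k+1}$ first (positions $1$ through $|A_{k+1}|$), then $A_{k+2}$ (positions $|A_{k+1}|+1$ through $|A_{k+1}|+|A_{k+2}|$), and so on, so the two conclusions of the claim reduce to the numerical inequalities
\begin{align*}
\text{(A)}\quad & |A_0|-|A_k| \;\le\; |A_{k+1}|+|A_{k+2}|,\\
\text{(B)}\quad & |A_0|-|A_{k-2}| \;\ge\; |A_{k+1}|.
\end{align*}
Inequality (A) says that $X_{k,k}$, the last interval on the diagonal, does not overflow past $A_{k+2}$, which already gives the main inclusion $X_{a,a}\subset A_{k+1}\cup A_{k+2}$ for every $a$. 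Inequality (B) says that the starting position of $X_{k-1,k-1}$ is strictly past the end of $A_{k+1}$; combined with (A) this traps both $X_{k-1,k-1}$ and $X_{k,k}$ inside $A_{k+2}$ (for $X_{k,k}$ we only need the weaker $|A_0|-|A_{k-1}|\ge|A_{k+1}|$, which is implied by (B) since $|A_{k-1}|<|A_{k-2}|$).

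To verify (A) and (B) I will plug in the Gaussian estimate of Claim~\ref{claim:binomial}(2), $|A_i|=(1+o(1))M e^{-2i^{2}/n}$, valid for all the indices $0,k-2,k-1,k,k+1,k+2$ since each is either $0$ or of order $\sqrt{n}=o(n^{2/3})$. Because $k=(\tfrac{1}{2}\sqrt{\pi/2}+o(1))\sqrt{n}$, the exponent $2(k+c)^{2}/n$ converges to $\pi/4$ for every fixed integer $c$. Substituting, inequality (A) becomes $1\le(3+o(1))e^{-\pi/4}\approx 1.37$, while (B) becomes $1\ge(2+o(1))e^{-\pi/4}\approx 0.91$, and both are valid for all sufficiently large $n$.

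The only point to watch is the tighter margin in (B): it leaves a constant gap of roughly $1-2e^{-\pi/4}\approx 0.088$, which must absorb the $o(1)$ errors coming from Claim~\ref{claim:binomial}(2). Since those error terms vanish while the gap is a fixed positive constant, this is not a real obstacle for large $n$. No further structural idea is needed: the claim is essentially a clean Gaussian calculation carried out on five binomial coefficients near the middle level of $2^{[n]}$.
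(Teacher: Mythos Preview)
Your argument is correct and follows the same route as the paper: reduce the claim to the fact that the zeroth diagonal has size $M-|A_{k}|$ satisfying $|A_{k+1}|<M-|A_{k}|<|A_{k+1}|+|A_{k+2}|$, and then verify this numerically via $|A_{k+c}|=(e^{-\pi/4}+o(1))M$. Your write-up is in fact slightly more careful than the paper's, since you isolate the exact inequality (B), $|A_{k-2}|+|A_{k+1}|\le M$, that pins $X_{k-1,k-1}$ inside $A_{k+2}$, whereas the paper leaves this step implicit in the phrase ``a constant proportion of $A_{k+2}$''.
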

\begin{proof}
By numerical calculations, we have $0.4M< |A_{k+2}|<|A_{k+1}|< |A_{k}|<0.46 M$, so the inequalities $|A_{k+1}|<M-|A_{k}|<|A_{k+1}|+|A_{k+2}|$ hold. Here, $M-|A_{k}|$ is the size of the first diagonal, which contains $X_{1,1},\dots,X_{k,k}$. The inequalities show that this diagonal contains $A_{k+1}$ and a constant proportion of $A_{k+2}$. But then as $X_{k-1,k-1},X_{k,k}$ are the last elements of this diagonal, we have $X_{k-1,k-1},X_{k,k}\subset A_{k+2}$.
\end{proof}

Now we are ready to prove the main lemma of this section.

\begin{proof}[Proof of Lemma~\ref{lemma:matching}]
  As $(a,a)$ is whole, there exists an index $r$ such that $X_{a,a}\subset A_{r}$, and let $A=|A_{a-1}|-|A_a|$. By Claim \ref{claim:k+1-k+2}, we have $r\in \{k+1,k+2\}$, and in particular, $r=k+2$ if $a\in \{k-1,k\}$.  Similarly as before, instead of working with the random set $X_{a,a}$, we will work with the set $Y$ we get by selecting each element of $A_r$ independently with probability $p=\frac{A}{|A_r|}$. Indeed, $X_{a,a}$ has the same distribution as $Y|(|Y|=A)$, and $\mathbb{P}(|Y|=A)\geq \frac{1}{|A|}\geq 2^{-n}$. Let $E$ be the bipartite graph with vertex classes $A_{a-1}$ and $A_{a}\cup Y$, where the edges are given by the comparable pairs, and let $E'$ be the subgraph of $E$ induced on $A_{a-1}\cup Y$.

Consider the degrees of $E'$ in $A_{a-1}$. Every $x\in A_{a-1}$ is comparable with exactly $d=\binom{n-m-a+1}{r-(a-1)}$ elements of $A_r$, so for every $x\in A_{a-1}$, $\mathbb{E}(\deg_{E'}(x))=pd$.

Next, let us bound $pd$. Consider two cases. 
\begin{description}
\item[$k=a$] We have $r-(a-1)=3$, so 
$$d=\binom{n-m-a+1}{r-(a-1)}\geq \binom{n/3}{3}=\Omega(n^3).$$ 
Also,  we have $p=\frac{A}{|A_r|}=\Omega(n^{-1/2})$ by the following estimates: $A=\Theta(a2^{n}n^{-3/2})=\Omega(2^{n}n^{-1})$ by Claim \ref{claim:binomial}, (5), and $|A_{r}|=\Theta(2^{n}n^{-1/2})$ by  Claim \ref{claim:binomial}, (1)-(2). Therefore, $pd=\Omega(n^{5/2})$.

\item[$a\leq k$] We have $r-(a-1)\geq 4$. Indeed, if $a=k-1$, then $r=k+2$, and if $a\leq k-2$, then $r\geq k+1$. But then $$d=\binom{n-m-a+1}{r-(a-1)}\geq \binom{n/3}{4}=\Omega(n^4).$$ 
Also $p=\frac{A}{|A_r|}=\Omega(n^{-1})$ as $A=\Theta(a2^{n}n^{-3/2})=\Theta(2^{n}n^{-3/2})$ by Claim \ref{claim:binomial}, (5), and $|A_{r}|=\Theta(2^{n}n^{-1/2})$ by  Claim \ref{claim:binomial}, (1)-(2). Therefore, we get $pd=\Omega(n^{3})=\Omega(n^{5/2})$.
\end{description}

Now consider the degree of $x$ in $E'$. As $\deg_{E'}(x)$ is the sum of independent Bernoulli random variables, we can apply Chernoff's inequality with $\delta<1$ (Claim~\ref{claim:chernoff}) to get
$$\mathbb{P}(\deg_{E'}(x)\geq (1+\delta)pd)\leq e^{-\frac{\delta^{2}pd}{3}}.$$
Choose $\delta$ such that $\delta^{2}pd=12n$, then  $\delta=O(n^{-3/4})$. Let $\mathcal{E}$ be the event that there exists $x\in A_{a-1}$ such that $\deg_{E'}(x)\geq (1+\delta)pd$. By the union bound, $\mathbb{P}(\mathcal{E})\leq |A_{a-1}|e^{-4n}<2^{-2n}.$ Moreover, $\mathbb{P}(\mathcal{E}|(|Y|=A))\leq \frac{\mathbb{P}(\mathcal{E})}{\mathbb{P}(|Y|=A)}\leq 2^{-n}$.

To finish the proof, it is enough to show that if $\overline{\mathcal{E}}\cap (|Y|=A)$ happens, then the desired matching exists. Let $d'=\binom{m+r}{r-(a-1)}$, then the degree of every vertex in $Y$ is $d'$. Let $U\subset Y$, $V=N_{E}(U)$ and let $f$ be the number of edges between $U$ and $V$. We have 
$$d'|U|=f\leq (1+\delta)pd|V|,$$
which implies $|V|\geq \frac{d'}{d(1+\delta)p}|U|.$ Note that $\frac{d'}{pd}=\frac{1}{p}\cdot \frac{|A_{a-1}|}{|A_{r}|}=\frac{|A_{a-1}|}{A}$ and $\frac{|U|}{|A|}\leq 1$, so
$$|V|\geq |U|\frac{|A_{a-1}|}{A(1+\delta)}\geq|U|\frac{|A_{a-1}|}{A}(1-\delta)\geq |U|\frac{|A_{a-1}|}{A}-\delta|A_{a-1}|.$$
Also, by Claim~\ref{claim:normalizedmatching}, for every $U'\subset A_{a}$, we have $|N_{E}(U')|\geq |U'|\frac{|A_{a-1}|}{|A_{a}|}$.

Now we show that Hall's condition holds in $E$ with defect $\Delta=\delta|A_{a-1}|=O(\frac{2^{n}}{n^{5/4}})$, that is, for every $U_{0}\subset A_{a}\cup Y$, we have $|N_{E}(U_{0})|\geq |U_{0}|-\Delta$. If this is true, then Claim~\ref{claim:hall} implies that there exists a matching of size at least $(1-\delta)|A_{a-1}|$ in $E$. But there exists a complete matching $M$ in $E$ from $A_{a}$ to $A_{a-1}$ by Claim~\ref{claim:levels}, so  there exists a matching $M'$ of maximal size that covers every element of $A_{a}$ by Claim~\ref{claim:maxmatching}. Then $M'$ satisfies the desired properties.

Let $U_{0}\subset A_{a}\cup Y$, $U=U_{0}\cap Y$ and $U'=U_{0}\cap Y$. Then 
\begin{align*}
|N_{E}(U_{0})|&\geq \max\{|N_{E}(U)|,|N_{E}(U')|\}\\
&\geq |A_{a-1}|\max\left\{\frac{|U|}{A}-\delta,\frac{|U'|}{|A_{a}|}\right\}\\
&\geq |A_{a-1}|\max\left\{\frac{|U|}{A},\frac{|U'|}{|A_{a}|}\right\}-\Delta.
\end{align*}
Let $\alpha=\frac{|U|}{A}$ and $\beta=\frac{|U'|}{|A_{a}|}$. If $\alpha\geq \beta$, then $|U'|\leq \alpha |A_{a}|$ and $|U_{0}|\leq \alpha(|A_{a}|+A)=\alpha |A_{a-1}|$. Therefore, $|N_{E}(U_{0})|\geq |A_{a-1}|\alpha-\Delta \geq |U_{0}|-\Delta$. We can proceed similarly if $\alpha<\beta$. This finishes the proof.
\end{proof}

\subsection{The proof of Theorem \ref{thm:mainthm}}

For $x\in 2^{[n]}$, let $x^c=[n]\setminus x$, and for $F\subset 2^{[n]}$, let $$\overline{F}=\{x^c: x\in F\}.$$

Fix $\lambda=n^{-1/16}$. It is enough to partition $B$ into $\binom{n}{\lfloor n/2\rfloor}$ chains such that all but at most $O(Mn^{-1/8})$ of the chains have size between $k-3\lambda k$ and $k+3\lambda k$. Indeed, let $\mathcal{D}$ be such a chain partition, and for $x\in A_{0}$, let $D_{x}\in \mathcal{D}$ be the chain containing $x$. If $n$ is even, let $D_{x}^+=D_{x}\cup \overline{D_{x^c}}$. If $n$ is odd, let $\tau: A_{0}\rightarrow [n]^{(\frac{n-1}{2})}$ be an arbitrary bijection such that $\tau(x)\subset x$ for every $x\in A_0$, and set $D^+_{x}=D_x\cup \overline{D}_{\tau(x)^c}$. Then $\mathcal{D}^+=\{D_x^+:x\in A_{0}\}$ is a chain partition of $2^{[n]}$ into $\binom{n}{\lfloor n/2\rfloor}$ chains with the desired properties.

In the rest of this section, we prove that there exists a chain partition of $B$ with the properties above.

By Lemma~\ref{lemma:antichain} and Lemma~\ref{lemma:matching}, there is a choice for the sets $X_{a,b}$, $(a,b)\in I$ such that for $n^{1/10}<a\leq k$, the size of the maximal antichain in $K_{a}$ is $(1+\frac{n^{o(1)}}{\sqrt{a}})(|A_{a-1}|-|A_{a}|)$, and there is a matching $M_{a}$ in $B_{a}$ that covers every element of $A_{a}$, and covers all but at most $O(2^{n}n^{-5/4})$ elements of $X_{a,a}$.

First, we shall cover most elements of $B$ by chains, most of whose size is between $k(1-3\lambda)$ and $k+1$, while collecting certain elements of $B$ which are not covered into a set $\mathcal{L}$. We refer to the elements of $\mathcal{L}$ as \emph{leftovers}. First of all, put every element $x\in B$ satisfying $|x|\geq m+C_{0}$ into $\mathcal{L}$. Then we added at most $2^{n}n^{-2/3}$ elements to $\mathcal{L}$. Also, we put every element of $X_{a,b}$ for $a\leq n^{1/10}$ and $a<b\leq k$ in $\mathcal{L}$. Then, by Claim \ref{claim:binomial}, (5), we put at most $$\sum_{a\leq n^{1/10}}\sum_{b=a}^{k} |X_{a,b}|\leq k\sum_{a\leq n^{1/10}} (|A_{a-1}|-|A_{a}|)=O(2^{n}n^{-4/5})$$ elements in $\mathcal{L}$. So far $$|\mathcal{L}|=O(2^{n}n^{-2/3}).$$

For $a=1,\dots,k$, say that $a$ is shattered if the number of indices $b$ such that $(a,b)\in I$ and $(a,b)$ is shattered is at least $\lambda k$. If $a$ is shattered, then put every element of $\bigcup_{b:(a,b)\in I} X_{a,b}$ into $\mathcal{L}$. In total, there are less than $C_{0}$ shattered pairs $(a,b)$, so the number of shattered indices $a$ is at most $\frac{C_{0}}{\lambda k}$. The size of the set $\bigcup_{b:(a,b)\in I} X_{a,b}$ is $$\sum_{b: (a,b)\in I}|X_{a,b}|=O(ka2^{n}n^{-3/2})=O(2^{n}n^{-1/2}),$$ so we added at most $O(\frac{C_{0}}{\lambda k}2^{n}n^{-1/2})=2^{n}n^{-7/16+o(1)}$ elements to $\mathcal{L}$.

Also, for every $(a,b)\in I$, if $(a,b)$ is shattered, put every element of $X_{a,b}$ into $\mathcal{L}$. The number of shattered sets is less than $C_{0}$, and $|X_{a,b}|=O(a2^{n}n^{-3/2})=O(2^{n}n^{-1})$, so we added at most $O(C_{0}2^{n}n^{-1})=2^{n}n^{-1/2+o(1)}$ elements to $\mathcal{L}$.  So far $$|\mathcal{L}|\leq 2^{n}n^{-7/16+o(1)}.$$

Now let $n^{1/10}<a\leq k-1$ be such that $a$ is not shattered. Let $A=|A_{a-1}|-|A_{a}|=\Theta(a2^{n}n^{-3/2})$, and let $r$ be the size of the set $\{b:(a,b)\in I,|\phi(a,b)|=1\}$. Then $k+1-a\geq r\geq k+1-a-\lambda k-\mu>k+1-a-2\lambda k$, where  $\mu=O(n^{1/3})<\lambda k$ by Claim~\ref{claim:missingblocks}. Also, we have $$|K_{a}|=rA.$$ By the well known theorem of Dilworth~\cite{D50}, $K_{a}$ can be partitioned into at most $(1+\frac{n^{o(1)}}{\sqrt{a}})A$ chains, let $\mathcal{C}_{a}$ denote the collection of chains in such a chain decomposition. Say that a chain $L\in \mathcal{C}_{a}$ is short, if $|L|\leq r-\lambda k$, and let $N_{\mbox{\tiny short}}$ denote the number of short chains. The size of every chain in $K_{a}$ is at most $r$, so 
$$rA=|K_{a}|\leq (r-\lambda k)N_{\mbox{\tiny short}}+(|\mathcal{C}_{a}|-N_{\mbox{\tiny short}})r,$$
which implies 
$$N_{\mbox{\tiny short}}<\frac{n^{o(1)}r}{\lambda k\sqrt{a}}A\leq \frac{\sqrt{a}}{\lambda}2^{n}n^{-3/2+o(1)}\leq 2^{n}n^{-19/16+o(1)}.$$
Say that a chain in $\mathcal{C}_{a}$ is \emph{irrelevant}, if its minimum is not in $X_{a,a}$. Then the number of irrelevant chains is 
$$N_{\mbox{\tiny irr}}=|\mathcal{C}_{a}|-|X_{a,a}|<\frac{n^{o(1)}}{\sqrt{a}}A=\sqrt{a}2^{n}n^{-3/2+o(1)}=O(2^{n}n^{-5/4}).$$ 
Finally, say that a chain $L\in \mathcal{C}_{a}$ is \emph{sad}, if its minimum $z$ is in $X_{a,a}$, but $z$ is not covered by the matching $M_{a}$. Then the number of sad chains is 
$$N_{\mbox{\tiny sad}}=O(2^{n}n^{-5/4}).$$ 
Let $\mathcal{C}_{a}^{*}$ be the set of chains in $\mathcal{C}_{a}$ that are neither short, irrelevant, nor sad, and let $L_{a}\subset K_{a}$ be the set of elements that are not covered by any chain in $\mathcal{C}_{a}^{*}$. Then
 $$|L_{a}|\leq k(N_{\mbox{\tiny short}}+N_{\mbox{\tiny irr}}+N_{\mbox{\tiny sad}})\leq  kn^{-19/16+o(1)}\leq 2^{n}n^{-11/16+o(1)}.$$ 
Add every element of $L_{a}$ for $n^{1/10}<a\leq k$ to the set of leftovers $\mathcal{L}$. In total, we added at most $k2^{n}n^{-11/16}=2^{n}n^{-3/16+o(1)}$ elements to $\mathcal{L}$. At this point, we have $|\mathcal{L}|=2^{n}n^{-3/16+o(1)}$, and we do not add any more elements to $\mathcal{L}$.

Construct the family of chains $\mathcal{D}$ as follows. First, using the matchings $M_{1},\dots,M_{k}$, construct a chain decomposition $\mathcal{D}_{0}$ of $\bigcup_{i=0}^{k}A_{k}$. For $x\in A_{0}$, let $D_{x}$ be the chain $x=x_{0}\subset...\subset x_{l}$, where $x_{i-1}$ is matched to $x_{i}$ in $M_{i}$ for $i=1,\dots,l$, and either $l=k$, or $x_{l}$ is not matched to any element of $A_{l+1}$ in $M_{l+1}$. Then $\mathcal{D}_{0}=\{D_{x}:x\in A_{0}\}$ is a chain decomposition of $\bigcup_{i=0}^{k}A_{k}$ into $M$ chains such that if a chain has maximum element in $A_{l}$, then the size of the chain is exactly $l+1$. 

Now consider some $D\in \mathcal{D}_{0}$. If $y\in A_{a-1}$ is the maximum element of $D$, $y$ is matched to some $z\in X_{a,a}$ in $M_{a}$, and there exists $C\in\mathcal{C}^{*}_{a}$ such that $z$ is the minimal element of $C$, then let $D^{+}=D\cup C$ and say that $D$ is \emph{compatible}. Noting that $|D|=a$ and $k+1-a\geq |C|\geq k+1-a-3\lambda k$, we have $k+1\geq |D^{+}|\geq k+1-3\lambda k$. Also, if $a=k+1$, then set $D^{+}=D$ and say that $D$ is also compatible. In this case, $|D|=k+1$. Otherwise, if $a\leq k$, and $y$ is not matched to some $z\in X_{a,a}$, or $z$ is not the minimal element of a chain in $\mathcal{C}^{*}_{a}$, then let $D^{+}=D$, and say that $D$ is incompatible. Set $\mathcal{D}=\{D^{+}:D\in\mathcal{D}_{0}\}$. The number of incompatible chains with maximum element in $A_{a-1}$ is at most the number of short and sad chains in $\mathcal{C}_{a}$, which is at most $$N_{\mbox{\tiny short}}+N_{\mbox{\tiny sad}}\leq 2^{n}n^{-19/16+o(1)}.$$ 
 Therefore, the total number of incompatible chains across every $a$ is at most $k2^{n}n^{-19/16+o(1)}=2^{n}n^{-11/16+o(1)}.$

To summarize our progress so far, we constructed a family $\mathcal{D}$ of $M$ chains such that $\mathcal{D}$ partitions $B\setminus\mathcal{L}$, and all but at most $2^{n}n^{-11/16+o(1)}$ chains in $\mathcal{D}$ have size between $k+1-3\lambda k$ and $k+1$.

It only remains to partition $\mathcal{L}$ into a few chains such that each of these chains can be attached to an element of $\mathcal{D}$. This guarantees that the number of chains remains $M$ and only a few of the chains get longer. Let $\mathcal{S}$ be a family of $|A_k|$ chains that partition $[n]^{(\geq m+k)}$, see Corollary \ref{cor:leftover}. Then $\mathcal{S}'=\{S\cap \mathcal{L}: S\in\mathcal{S}\}$ forms a chain partition of the leftover elements. We form our final chain partition by gluing the chains of $\mathcal{S}'$ to certain chains of $\mathcal{D}$. For $x\in A_k$, let $S_x$ be the unique chain containing $x$. For $D\in\mathcal{D}$, let $D^{*}=D\cup (S_x\cap \mathcal{L})$ if the maximum element of $D$ is in $A_k$, and this maximum element is $x$. Otherwise, let $D^{*}=D$. Then $\mathcal{D}^{*}=\{D^{*}:D\in\mathcal{D}\}$ is a chain partition of $B$ into $M$ chains. We show that $\mathcal{D}$ satisfies the desired properties.

Let us count the number of chains $D\in\mathcal{D}$ such that either $|D^{*}|\leq k+1-3\lambda k$, or $|D^*|\geq k+1+\lambda k$. If $|D^{*}|\leq k+1-3\lambda k$, then $D^{*}$ is an incompatible chain in $\mathcal{D}_{0}$, so the number of such chains is at most $2^{n}n^{-11/16+o(1)}=Mn^{-3/16+o(1)}$. On the other hand, if $|D^*|\geq k+1+\lambda k$, then $|D|=k+1$ and there exists $x\in A_k$ such that $D^*=D\cup (S_x \cap \mathcal{L})$. But then $|S_x\cap \mathcal{L}|\geq \lambda k$, so the number of such chains is at most $\frac{|\mathcal{L}|}{\lambda k}=2^{n}n^{-5/8+o(1)}=M n^{-1/8+o(1)}$.\hfill$\Box$

\begin{proof}[Proof of Corollary \ref{remark}]
Let $\mathcal{C}_{0}$ be the family of chains $C\in \mathcal{C}$ such that $||C|-s|\geq n^{\frac{1}{2}-\frac{1}{20}}$. 
By Theorem \ref{thm:mainthm}, $|\mathcal{C}_{0}| \leq M n^{-1/8+o(1)}$ 
Also, let $\mathcal{C}_{1}\subset \mathcal{C}_{0}$ be the family of chains $C$ such that $|C|\geq \sqrt{n}\log n$, and let $\mathcal{C}_{2}=\mathcal{C}_{0}\setminus\mathcal{C}_{1}$.

First, note that every chain of size $\sqrt{n}\log n$ must contain a set of size either at least $\frac{n+\sqrt{n}\log n}{2}$, or at most $\frac{n-\sqrt{n}\log n}{2}$. But by Claim~\ref{claim:binomial}, (3), we have $$\left|[n]^{(\leq \frac{n-\sqrt{n}\log n}{2})}\right|=\left|[n]^{(\geq \frac{n+\sqrt{n}\log n}{2})}\right|\leq 2^{n}e^{-(\log n)^{2}/2},$$ so $|\mathcal{C}_{1}|\leq 2^{n+1}e^{-(\log n)^{2}/2}$. Therefore,
$$\sum_{C\in\mathcal{C}_{1}}|C|\leq 2^{n+1}e^{-(\log n)^{2}/2}n=O\left(\frac{2^{n}}{n}\right).$$
Second, since $|\mathcal{C}_{2}|<Mn^{-\frac{1}{8}+o(1)}$, we can write $$\sum_{C\in\mathcal{C}_{2}}|C|<Mn^{-\frac{1}{8}+o(1)}\sqrt{n}\log n=2^{n}n^{-\frac{1}{8}+o(1)}.$$
Thus, $\sum_{C\in\mathcal{C}_{0}}|C|\leq 2^{n}n^{-\frac{1}{8}+o(1)}.$
\end{proof}

\section{Applications}\label{sect:applications}

\subsection{Minimal Sperner graphs--Proof of Theorem~\ref{thm:sperner}}\label{sect:sperner}

Let $M=\binom{n}{\lfloor n/2\rfloor}$. The lower bound follows from Tur\'an's theorem~\cite{T41}. Indeed, for any graph $G$, if $\alpha(G)$ denotes the independence number of $G$, then $|E(G)|\geq \frac{|V(G)|^{2}}{2\alpha(G)}-\frac{|V(G)|}{2}.$ Plugging $|V(G)|=2^{n}$ and $\alpha(G)=M$ into this formula, we get $$|E(G)|\geq \frac{2^{2n}}{2M}-\frac{2^{n}}{2}=\left(\sqrt{\frac{\pi}{8}}+o(1)\right)2^{n}\sqrt{n}.$$

It only remains to prove the upper bound. Let $s=\frac{2^{n}}{M}=(\sqrt{\frac{\pi}{2}}+o(1))\sqrt{n}$. Let $\mathcal{C}$ be a family of $M$ chains partitioning $2^{[n]}$ such that all but at most $n^{-\frac{1}{8}+o(1)}$ proportion of the chains in $\mathcal{C}$ have size  $(1+O(n^{-1/16}))s$. Such a chain decomposition exists by Theorem~\ref{thm:mainthm}. Let $G$ be the graph on $2^{[n]}$ in which $x$ and $y$ are joined by an edge if $x$ and $y$ belong to the same chain $C_{i}$. Note that if $I\subset V(G)$ is an independent set, then $|I\cap C|\leq 1$ for $C\in\mathcal{C}$, so $|I|\leq M$. Therefore, $\alpha(G)=M$. It only remains to bound the number of edges of $G$. We are going to proceed similarly as in the proof of Corollary \ref{remark}. By the construction of $G$, we have
$$|E(G)|=\sum_{C\in\mathcal{C}}\binom{|C|}{2}\leq \frac{1}{2}\sum_{C\in \mathcal{C}}|C|^{2}.$$
Let $\mathcal{C}_{1}\subset \mathcal{C}$ be the family of chains $C$ such that $|C|\geq \sqrt{n}\log n$, and let $\mathcal{C}_{2}\subset \mathcal{C}$  be the family of chains $C$ such that $s+n^{1/2-1/20}<|C|< \sqrt{n}\log n$. Also, let $\mathcal{C}_{3}=\mathcal{C}\setminus (\mathcal{C}_{1}\cup\mathcal{C}_{2})$.

First, note that every chain of size $\sqrt{n}\log n$ must contain a set of size either at least $\frac{n+\sqrt{n}\log n}{2}$, or at most $\frac{n-\sqrt{n}\log n}{2}$. But by Claim~\ref{claim:binomial}, (3), we have $$\left|[n]^{(\leq \frac{n-\sqrt{n}\log n}{2})}\right|=\left|[n]^{(\geq \frac{n+\sqrt{n}\log n}{2})}\right|\leq 2^{n}e^{-(\log n)^{2}/2},$$ so $|\mathcal{C}_{1}|\leq 2^{n+1}e^{-(\log n)^{2}/2}$. Therefore,
$$\sum_{C\in\mathcal{C}_{1}}|C|^{2}\leq 2^{n+1}e^{-(\log n)^{2}/2}n^{2}=o(2^{n}).$$
Since, by Theorem \ref{thm:mainthm}, $|\mathcal{C}_{2}|<Mn^{-\frac{1}{8}+o(1)}$, we can write $$\sum_{C\in\mathcal{C}_{2}}|C|^{2}<Mn^{-\frac{1}{8}+o(1)}n(\log n)^{2}=o(2^{n}\sqrt{n}).$$
Finally, 
$$\sum_{C\in\mathcal{C}_{3}}|C|^{2}\leq M(s+n^{1/2-1/20})^{2}=Ms^{2}(1+o(1))=\left(\sqrt{\frac{\pi}{2}}+o(1)\right)2^{n}\sqrt{n}.$$
Therefore, $$|E(G)|\leq \frac{1}{2}\sum_{C\in\mathcal{C}_{1}}|C|^{2}+\frac{1}{2}\sum_{C\in\mathcal{C}_{2}}|C|^{2}+\frac{1}{2}\sum_{C\in\mathcal{C}_{3}}|C|^{2}\leq \left(\sqrt{\frac{\pi}{8}}+o(1)\right)2^{n}\sqrt{n},$$
finishing the proof.

\subsection{Applications to extremal problems}\label{sect:extremal}
Several problems in extremal set theory are instances of the following general question. Say that a formula is \emph{affine}, if it is built from variables, the operators $\cap$ and $\cup$, and parentheses $(\, ,)$ (complementation and constants are not allowed, e.g. $x\cap \{1,2,3\}$ and $x\setminus y$ are \emph{not} affine formulas.) Also, an \emph{affine statement} is a statement of the form $f\subset g$ or $f=g$, where $f$ and $g$ are affine formulas. Finally, an \emph{affine configuration} is a Boolean expression, which uses symbols $\lor, \land, \neg$ and whose variables are replaced with affine statements. Given an affine configuration $C$ with $k$ variables, a family $H\subset 2^{[n]}$ \emph{contains $C$}, if there exists $k$ distinct elements of $H$ that satisfy $C$, otherwise, say that \emph{$H$ avoids $C$}. Let $\ex(n,C)$ denote the size of the largest family $H\subset 2^{[n]}$ such that $H$ avoids $C$. Say that an affine configuration $C$ is \emph{satisfiable} if there exists a family of sets satisfying $C$.  

Here are some examples  of well known questions which ask to determine the order of magnitude of $\ex(n,C)$ for some specific affine configuration $C$.

\textbf{Sperner's theorem.} An antichain is exactly a family not containing the affine configuration $C\equiv(x\subset y)$. Hence, Sperner's theorem~\cite{S28} is equivalent to the statement $\mbox{ex}(n,C)=\binom{n}{\lfloor n/2\rfloor}$.

\textbf{Union-free families.} A family $H\subset 2^{[n]}$ is \emph{union-free}, if it does not contain three distinct sets $x,y,z$ such that $z=x\cup y$. But $H$ is union-free if and only if it does not contain the affine configuration $(z=x\cup y)$. The size of the largest union-free family was investigated by Kleitman~\cite{K76}, who proved that the size of such a family is at most $(1+o(1))\binom{n}{\lfloor n/2\rfloor}$. 

\textbf{Forbidden subposets.} Let $P$ be a poset, and let $\prec$ be the partial ordering on $P$. The following questions are extensively studied \cite{BJ12,B09,GL09, KT83, MP17, T19}: what is the maximum size of a family in $2^{[n]}$ that does not contain $P$ as a weak/induced subposet? For each $p\in P$, introduce the variable $x_{p}$. Then, forbidding $P$ as a weak subposet is equivalent to forbidding the affine configuration $$C_{P}\equiv\bigwedge_{\substack{p,q\in P\\ p\prec q}}(x_{q}\subset x_{q}),$$ while $P$ as an induced subposet corresponds to the affine configuration $$C'_{P}\equiv\bigwedge_{\substack{p,q\in P\\ p\prec q}}(x_{p}\subset x_{q})\wedge\bigwedge_{\substack{p,q\in P\\ p\not\prec q,q\not\prec p}}(\neg(x_{p}\subset x_{q})\wedge \neg(x_{q}\subset x_{p})).$$
Let $e(P)$ denote the maximum number $k$ such that the union of the $k$ middle levels of $2^{[n]}$ does not contain $C_{P}$, and define $e'(P)$ similarly for $C'_{P}$. It is commonly believed that $\ex(n,C_{P})=(e(P)+o(1))\binom{n}{\lfloor n/2\rfloor}$ and $\ex(n,C'_{P})=(e'(P)+o(1))\binom{n}{\lfloor n/2\rfloor}$. This conjecture has been only verified for posets with certain special structures, for example when the Hasse diagram of $P$ is a tree \cite{BJ12,B09}, so in general it is wide open. Also, while it is clear that $\ex(n,C_{P})\leq (|P|-1)\binom{n}{\lfloor n/2\rfloor}$ (as a chain of size $|P|$ satisfies $C_{P}$), it is already not obvious that $\ex(n,C'_{P})=O(\binom{n}{\lfloor n/2\rfloor})$. This was verified by Methuku and P\'alv\"olgyi \cite{MP17}. Finally, it is not even known whether the limit $\lim_{n\rightarrow\infty}\ex(n,C_{P})/\binom{n}{\lfloor n/2\rfloor}$ exists, see~\cite{GL09}.

\textbf{Boolean algebras.} The \emph{$d$-dimensional Boolean algebra} is a set of the form $$\{x_{0}\cup_{i\in I}x_{i}:I\subset [d]\},$$ where $x_{0},\dots,x_{d}$ are pairwise disjoint sets, $x_{1},\dots,x_{d}$ are nonempty. Let $b(n,d)$ denote the size of the largest family $H\subset 2^{[n]}$ that does not contain a $d$-dimensional Boolean algebra. It was proved by Erd\H{o}s and Kleitman~\cite{EK71} that $b(n,2)=\Theta(2^{n}n^{-1/4})$, where the constants hidden by the $\Theta(.)$ notation are unspecified and difficult to compute. Also, this was extended by Gunderson, R\"{o}dl and Sidorenko~\cite{GRS99} who proved that $b(n,d)=O(2^{n}n^{-1/2^{d}})$, where the constant hidden by the $O(.)$ notation depends on $d$. Finally, this was strengthened by Johnston, Lu and Milans \cite{JLM15} to $b(n,d)\leq 22\cdot 2^{n}n^{-1/2^{d}}$. Note that a Boolean algebra is equivalent to the following affine configuration: for $I\subset [d]$, let $x_{I}$ be a variable, then the corresponding affine configuration is $$\bigwedge_{1\leq i<j\leq d}(x_{\emptyset}=x_{\{i\}}\cap x_{\{j\}})\wedge\bigwedge_{I\subset [d], I\neq\emptyset} (x_{I}=\bigcup_{i\in I}x_{\{i\}}).$$
Moreover, the above results on Boolean algebras also show that for any formular  $C$, if it is satisfiable, then there exists $\alpha>0$ such that $\ex(n,C)=O(2^{n}n^{-\alpha})$. Indeed, if $C$ is satisfiable, then there exists $d$ such that $2^{[d]}$ contains $C$, but then every $d$-dimensional Boolean algebra also contains $C$.

\bigskip

Here, we provide a unified framework to handle such problems. First, let us consider a more general problem. A \emph{$d$-dimensional grid} is a $d$-term  Cartesian product of the form $[k_1]\times...\times[k_d]$, endowed with the following coordinatewise ordering $\subset$: $(a_{1},\dots,a_{d})\subset (b_{1},\dots,b_{d})$ if $a_{i}\leq b_{i}$ for $i=1,\dots,d$ (with slight abuse of notation, we also use $\subset$ to denote the comparability in the grid, for reasons that should become clear later). Also, define the operations $\cap$ and $\cup$ such that $(a_{1},\dots,a_{d})\cap (b_{1},\dots,b_{d})=(\min\{a_{1},b_{1}\},\dots,\min\{a_{d},b_{d}\})$ and $(a_{1},\dots,a_{d})\cup (b_{1},\dots,b_{d})=(\max\{a_{1},b_{1}\},\dots,\max\{a_{d},b_{d}\})$. Considering the natural isomorphism between the Boolean lattice $2^{[n]}$ and the grid $[2]^{n}$, $\subset,\cap,\cup$ naturally extend their usual definition. But now we can talk about affine configurations in the grid as well. If $F$ is a grid, say that a subset $H\subset F$ \emph{contains} the affine configuration $C$ with $k$ variables, if there exists $k$ distinct elements of $H$ that satisfy $C$, otherwise, say that \emph{$H$ avoids $C$}. Let $\ex(F,C)$ denote the size of the largest subset of $F$ which does not contain  $C$, and write $\ex(k,d,C)$ instead of $\ex([k]^d,C)$. 

Our aim is to show that one can derive bounds for $\ex(n,C)$ using the function $f(k)=\ex(k,d,C)$, where $d$ is some fixed integer. Indeed, by considering a chain decomposition of $2^{[n/d]}$ into chains of almost equal size, one can partition $2^{[n]}$ into $d$-dimensional grids that are also almost equal. Then, given a family $H\subset 2^{[n]}$ avoiding $C$, we bound the intersection of $H$ with each of these grids (using the function $\ex(k,d,C)$), which then turns into a bound on $\ex(n,C)$. The reason why we would like to work with $\ex(k,d,C)$ instead of $\ex(n,C)$ is that for many affine configurations $C$, estimating $\ex(k,d,C)$ is equivalent to an (ordered) hypergraph Tur\'an problem, which is sometimes easier to handle or already has good upper bounds.

Similar ideas were already present in~\cite{EK71,GRS99,MP17}, but executed in a somewhat suboptimal way. The following theorem is the main result of this section.

\begin{theorem}\label{thm:grid}
 Let $d$ be a positive integer, and let $c,\alpha>0$ such that $\mbox{ex}(k,d,C)\leq ck^{d-\alpha}$ holds for every sufficiently large $k\in\mathbb{Z}^{+}$. Then 
 $$\ex(n,C)\leq (1+o(1))c\left(\frac{2d}{\pi n}\right)^{\frac{\alpha}{2}}2^{n}.$$
\end{theorem}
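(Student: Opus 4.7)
The plan is to partition $2^{[n]}$ into $d$-dimensional grids of nearly equal side lengths by applying Theorem~\ref{thm:mainthm} coordinate-wise, and then to bound the intersection of the forbidden family with each grid using the hypothesis $\ex(k,d,C)\le ck^{d-\alpha}$.

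First, decompose $[n]=[n_1]\sqcup\cdots\sqcup[n_d]$ with $n_i\in\{\lfloor n/d\rfloor,\lceil n/d\rceil\}$, so that $2^{[n]}\cong 2^{[n_1]}\times\cdots\times 2^{[n_d]}$ under coordinate-wise $\subset$, $\cup$, $\cap$. Apply Theorem~\ref{thm:mainthm} to each $2^{[n_i]}$ to obtain a chain partition $\mathcal{C}_i$ into $M_i:=\binom{n_i}{\lfloor n_i/2\rfloor}$ chains, all but an $n_i^{-1/8+o(1)}$ proportion of which have size $s_i(1+O(n_i^{-1/16}))$, where $s_i=2^{n_i}/M_i$. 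Since every chain $C\in\mathcal{C}_i$ is order-isomorphic to the linear grid $[|C|]$, each Cartesian product $G=C_1\times\cdots\times C_d$ with $C_i\in\mathcal{C}_i$ is a $d$-dimensional grid, and these grids partition $2^{[n]}$.

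Fix now $H\subset 2^{[n]}$ avoiding the affine configuration $C$. Since the affine operators commute with the product decomposition, $H\cap G$ avoids $C$ inside $G$, and the hypothesis yields $|H\cap G|\le \ex(|C_1|,\ldots,|C_d|;C)\le c(\max_i|C_i|)^{d-\alpha}$. Call $G$ \emph{typical} if every component $C_i$ has length $s_i(1+O(n_i^{-1/16}))$. For typical $G$ we have $\max_i|C_i|\le s(1+o(1))$ with $s=\sqrt{\pi n/(2d)}(1+o(1))$, and by Claim~\ref{claim:binomial}(1),
$$\prod_{i=1}^d M_i=(1+o(1))\Big(\frac{2d}{\pi n}\Big)^{d/2}2^n.$$
Therefore the typical grids contribute at most
$$\Big(\prod_{i=1}^d M_i\Big)\cdot c\,s^{d-\alpha}(1+o(1))=(1+o(1))\,c\Big(\frac{2d}{\pi n}\Big)^{\alpha/2}2^n,$$
which matches the claimed bound.

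The main obstacle is controlling the atypical grids without spoiling the $(1+o(1))$ factor: the crude estimate $|H\cap G|\le|G|$ combined with Corollary~\ref{remark} only yields error $O(n^{-1/8+o(1)}2^n)$, which is absorbed only for $\alpha<1/4$. To handle all $\alpha$, I would split the atypical grids into two classes. Class (I) consists of atypical grids with every $|C_i|\le\sqrt n\,\log n$; here the hypothesis still applies and gives $|H\cap G|\le c(\sqrt n\,\log n)^{d-\alpha}$, while Theorem~\ref{thm:mainthm} and a union bound over the offending coordinate bound the count of such grids by $dn^{-1/8+o(1)}\prod_j M_j$, so their total contribution is $n^{-1/8+o(1)}(\log n)^{d-\alpha}\cdot n^{-\alpha/2}2^n=o(n^{-\alpha/2}2^n)$. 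Class (II) consists of grids with some $|C_i|>\sqrt n\,\log n$; as in the proof of Corollary~\ref{remark}, such a chain must contain a set of atypical size, and Claim~\ref{claim:binomial}(3) implies that these chains in $\mathcal{C}_i$ cover at most $O(2^{n_i}e^{-(\log n_i)^2/2})$ elements, so the total size (and hence contribution) of class (II) grids is $O(2^n e^{-(\log n)^2/2})$. Both error terms are $o(n^{-\alpha/2}2^n)$, which together with the typical estimate gives the claimed bound.
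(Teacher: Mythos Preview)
Your proposal is correct and follows essentially the same route as the paper: factor $2^{[n]}$ into $d$ nearly equal pieces, apply Theorem~\ref{thm:mainthm} to each factor, form the product grids, and bound $|H\cap G|$ on each grid via the hypothesis. There is, however, one genuine difference worth pointing out.

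After applying Theorem~\ref{thm:mainthm} to each $2^{[n_i]}$, the paper \emph{cuts every chain longer than $s(1+n^{-1/20})$ into pieces of size $s$}. This forces every chain in the refined partition $\mathcal{D}_i$ to have length at most $s(1+o(1))$, so the only atypical grids are those with some side shorter than $s(1-n^{-1/20})$; the single embedding bound $\ex(F,C)\le c(1+o(1))s^{d-\alpha}$ then handles all grids uniformly, and the atypical ones are disposed of simply by counting. Your version instead leaves the chain decomposition untouched and treats long chains separately via your Class~(II) argument, invoking the Chernoff tail exactly as in the proof of Corollary~\ref{remark}. Both work: the paper's cutting trick replaces your two-case split by a one-line preprocessing step, while your argument avoids tampering with the decomposition at the cost of an extra case. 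For the typical grids the paper invokes Claim~\ref{claim:assymetric} to get the density bound $\ex(F,C)\le c|F|\bigl(\min_i|D_i|\bigr)^{-\alpha}$ and then sums $|F|$ to $2^n$, whereas you use the cruder embedding $F\subset[\max_i|C_i|]^d$ and count grids; since $(\prod_i M_i)\,s^{d}=(1+o(1))2^n$, the two computations give the same leading constant.
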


Before we can prove this theorem, let us see how $\ex(F,C)$ and $\ex(k,d,C)$ are related.

\begin{claim}\label{claim:assymetric}
  Let $k\leq k_{1}\leq\dots\leq k_d$ and $F=[k_1]\times\dots\times[k_d]$. Then $$\frac{\ex(F,C)}{k_1\dots k_d}\leq \frac{\ex(k,d,C)}{k^d}.$$
\end{claim}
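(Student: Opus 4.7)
The plan is to use a standard random-restriction averaging argument. Fix a family $H\subset F$ of maximum size avoiding $C$, so $|H|=\ex(F,C)$. For each coordinate $i=1,\dots,d$, I would choose a $k$-element subset $S_i\subset [k_i]$ uniformly at random and independently across coordinates, and consider the random subgrid $F'=S_1\times\dots\times S_d$.

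The key observation is that $F'$, equipped with the componentwise order inherited from $F$, is isomorphic as an ordered grid to $[k]^d$, and moreover the operations $\cap$ and $\cup$ are preserved: if $a,b\in F'$, then $a\cap b=(\min(a_i,b_i))_i$ lies in $F'$ (because $\min(a_i,b_i)\in S_i$), and similarly for $a\cup b$. Consequently, any affine formula evaluated on elements of $F'$ gives the same result whether computed in $F$ or in $F'$. Thus if $H\cap F'$ contained $C$, so would $H$; hence $H\cap F'$ avoids $C$ in $F'\cong[k]^d$, giving $|H\cap F'|\leq \ex(k,d,C)$.

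Now I would average over the choice of the $S_i$. For any fixed $x=(x_1,\dots,x_d)\in H$, the probability that $x_i\in S_i$ equals $k/k_i$, and these events are independent across $i$, so
$$\P(x\in F')=\prod_{i=1}^{d}\frac{k}{k_i}=\frac{k^d}{k_1\cdots k_d}.$$
Summing over $x\in H$ yields
$$\E\,|H\cap F'|=\frac{|H|\,k^d}{k_1\cdots k_d}.$$
Combining this with the pointwise bound $|H\cap F'|\leq \ex(k,d,C)$ (which holds for every outcome, hence also in expectation) gives
$$\frac{\ex(F,C)\,k^d}{k_1\cdots k_d}\leq \ex(k,d,C),$$
which is precisely the claim after dividing through by $k^d$. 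No part of this argument looks delicate; the only thing one must check carefully is that the affine operations are intrinsic to the subgrid, which is immediate from the coordinatewise definitions of $\cap$ and $\cup$.
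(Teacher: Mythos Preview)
Your proof is correct and essentially identical to the paper's own argument: both pick a uniformly random $k$-element subset in each coordinate, note that the resulting subgrid is closed under $\cap$ and $\cup$ and hence isomorphic to $[k]^d$, and compare the expected size of $H\cap F'$ with the pointwise bound $\ex(k,d,C)$. If anything, you are slightly more careful in spelling out why affine formulas are intrinsic to the subgrid.
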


\begin{proof}
 Let $H\subset F$ such that $H$ does not contain a copy of $C$ and $|H|=\ex(F,C)$. For $i=1,\dots,d$, let $X_i$ be a random $k$ element subset of $[k_i]$, chosen from the uniform distribution, and let $F'=X_1\times\dots\times X_d$. Let $N=|F'\cap H|.$ Clearly, for every $v\in H$, we have $$\mathbb{P}(v\in F')=\frac{k^d}{k_1\dots k_d},$$
so $\mathbb{E}(N)=|H|\frac{k^d}{k_1\dots k_d}.$ Therefore, there exists a choice for $X_1,\dots,X_d$ such that $N\geq |H|\frac{k^d}{k_1\dots k_d}$. As $F'$ is isomorphic to the grid $[k]^d$ and $F'\cap H$ does not contain a copy of $C$, we get $$\ex(k,d,C)\geq \frac{k^d}{k_1\dots k_d}\ex(F,C).$$
\end{proof}

\begin{proof}[Proof of Theorem~\ref{thm:grid}]
In this proof, we consider $d$ as a constant, so the notation $O(.)$ hides a constant which might depend on $d$.

Let $H\subset 2^{[n]}$ be a subset of size $\mbox{ex}(n,C)$ not containing a copy of $C$. Write $n=n_1+\dots+n_d$, where $n_i\in\{\lfloor n/d\rfloor,\lceil n/d\rceil\}$ for $i=1,\dots,d$. Let $\mathcal{C}_i$ be a chain decomposition of $2^{[n_i]}$ given by Theorem~\ref{thm:mainthm}, that is, all but at most $n^{-\frac{1}{8}+o(1)}$ proportion of the chains in $\mathcal{D}_i$ have size $s(1+O(n^{-\frac{1}{16}}))$, where $$s=\left(\sqrt{\frac{\pi}{2}}+o(1)\right)\sqrt{\frac{n}{d}}.$$ If a chain $D\in \mathcal{C}_{i}$ is longer than $s(1+n^{-\frac{1}{20}})$, cut it into $\lceil\frac{|D|}{n}\rceil$ smaller chains such that the size of all but at most one of them is $s$. Let $\mathcal{D}_{i}$ be the resulting chain partition. As the number of chains of size more than $s(1+n^{-\frac{1}{20}})$ is at most $2^{n_{i}}n^{-\frac{5}{8}+o(1)}$, every chain in $\mathcal{D}_{i}$ has size at most $s(1+n^{-\frac{1}{20}})$, and the number of chains of size less than $s(1-n^{-\frac{1}{20}})$ is at most $2^{n_{i}}n^{-\frac{5}{8}+o(1)}$.

Let $\mathcal{D}=\{D_{1}\times\dots\times D_{d}:D_{1}\in\mathcal{D}_{1},\dots,D_{d}\in\mathcal{D}_{d}\}.$ Then $2^{[n]}$ is the disjoint union of the elements of $\mathcal{D}$.
Here, $D=D_1\times\dots\times D_{d}\in\mathcal{D}$ behaves exactly like the $d$-dimensional grid $F=[|D_1|]\times\dots\times[|D_d|]$. More precisely, let $\phi_i: D_i\rightarrow [|D_i|]$ be the bijection defined as $\phi_i(x_j)=j$, where $x_1\subset\dots\subset x_{|D_i|}$ are the elements of $D_i$. Setting $\phi=(\phi_1,\dots,\phi_d)$, $\phi$ is a bijection between $D$ and $F$ such that for any $x,y,z\in D$,
\begin{itemize}
\item $x\subset y$ if and only if $\phi(x)\subset \phi(y),$
\item $x\cup y=z$ if and only if $\phi(x)\cup\phi(y)=\phi(z),$
\item $x\cap y=z$ if and only if $\phi(x)\cap \phi(y)=\phi(z).$
\end{itemize}
But this means that a subset $H\cap D$ contains $C$ if and only if $\phi(H\cap D)$ contains $C$. Therefore, $|H\cap D|\leq \mbox{ex}(F,C)$.
Let $k=\min\{|D_{1}|,\dots,|D_{d}|\}$. Then by Claim~\ref{claim:assymetric}, we have 
\begin{equation}\label{equ:exFC}\ex(F,C)\leq \frac{|D_{1}|\dots
|D_{d}|}{k^{d}}\ex(k,d,C)\leq c|F|k^{-\alpha}\leq c(1+o(1))s^{d-\alpha}.
\end{equation}
Let $$\mathcal{D}'=\{D_{1}\times\dots\times D_{d}\in \mathcal{D}:|D_{i}|\leq s(1-n^{-\frac{1}{20}})\mbox{ for some }i\in [d]\}.$$ Then $|\mathcal{D}'|=o(2^{n}n^{-d/2}).$ Hence,  $$\sum_{D\in\mathcal{D}'}|D\cap H|\leq o(2^{n}n^{-\frac{d}{2}}s^{d-\alpha})=o(2^{n}n^{-\frac{\alpha}{2}}).$$
Also, by the second inequality in (\ref{equ:exFC}), we have 
$$\sum_{D\in\mathcal{D}\setminus \mathcal{D}'}|D\cap H|\leq \sum_{D\in\mathcal{D}\setminus \mathcal{D}'} (1+o(1)c|D|s^{-\alpha}\leq (1+o(1))c2^{n}s^{-\alpha}.$$
Therefore,
$$|H|=\sum_{D\in\mathcal{D}}|D\cap H|\leq (1+o(1))c\left(\frac{2d}{\pi n}\right)^{\frac{\alpha}{2}}2^{n}.$$
\end{proof}

Let us see some quick applications. Note that most of these applications were already covered in \cite{T19} with slightly worse constants.

\textbf{Sperner's theorem.} As an easy exercise, let us recover the asymptotic version of Sperner's theorem from Theorem \ref{thm:grid}. Indeed, let $C\equiv (x\subset y)$, then trivially $\ex(k,1,C)=1$. Therefore, $\ex(n,C)\leq (1+o(1))\sqrt{\frac{2}{n\pi}}2^{n}$.
    
\textbf{Union-free families.} Let $C\equiv (z=x\cup y)$. Consider the case $d=2$, then the affine configuration $C$ in $[k]^{2}$ corresponds to three points of the grid which form a corner, i.e., $(a,b), (c,b), (c,d)$ such that $a<c$ and $d<b$. 
It is not difficult to see that $\ex(k,2,C) \leq 2k$. Indeed, suppose $Q$ is a subset of the grid of order at least $2k + 1$. On every horizontal line delete the left most point and 
on every vertical line delete the lowest point which is in $Q$. Since we delete at most $2k$ points, some point $(b,c) \in Q$ must remain. Then, by definition, there are points $(a, b)$ and $(c,d)$ with $a < c$ and $d < b$ which are also in $Q$. 
Thus by Theorem \ref{thm:grid} (with $d=2$ and $\alpha=1$), we get $$\ex(n,C)\leq (1+o(1))2\sqrt{\frac{4}{\pi n}}2^{n}=(1+o(1))2\sqrt{2}\binom{n}{\lfloor n/2\rfloor},$$ which is only slightly worse than the bound of Kleitman \cite{K76}.
    
\textbf{Forbidden subposets.} Let $P$ be a poset that is not an antichain, and consider the corresponding affine configurations $C_{P}$ and $C'_{P}$. Let $d_{0}$ be the Duschnik-Miller dimension of $P$, that is, $d_{0}$ is the smallest $d$ such that $[k]^{d}$ contains the affine configuration $C'_{P}$ for some $k$. It was proved by Tomon \cite{T19} that there exists a constant $\alpha(P)$ such that if $d\geq d_{0}$, then $\ex(k,d,C'_{P})\leq \alpha(P)w$, where $w$ is the size of the largest antichain in $[k]^{d}$. We remark that $w=(1+o(1))\sqrt{\frac{6}{\pi}}\cdot\frac{k^{d-1}}{\sqrt{d}}$ as $\min\{k,d\}\rightarrow\infty$, see p.~63--68 in \cite{A87}. Let $$\beta(d,P)=\limsup_{k\rightarrow\infty} \frac{\sqrt{d}}{k^{d-1}}\ex(k,d,C_{P}),$$
and 
$$\beta'(d,P)=\limsup_{k\rightarrow\infty} \frac{\sqrt{d}}{k^{d-1}}\ex(k,d,C'_{P}).$$
Then $\beta(d,P)\leq \beta'(d,P)<\infty.$ Applying Theorem \ref{thm:grid}, we get $$\ex(n,C_{P})\leq (1+o(1))\beta(d,P)\binom{n}{\lfloor n/2\rfloor},$$ and $$\ex(n,C'_{P})\leq (1+o(1))\beta'(d,P)\binom{n}{\lfloor n/2\rfloor}.$$ This tells us that one can derive bounds on $\ex(n,C_{P})$ and $\ex(n,C'_{P})$ by considering the behavior of the functions $\ex(k,d,C_{P})$ and $\ex(k,d,C'_{P})$ for some fixed $d$. However, finding the values of these functions is equivalent to a forbidden $d$-dimensional matrix pattern problem (see e.g. \cite{KM06} for a description of this problem, and \cite{MP17,T19} for the connection of posets and matrix patterns), which provides us with new tools in order to estimate $\ex(n,C_{P})$ and $\ex(n,C'_{P})$.
    
\textbf{Boolean algebras.} Finally, let us consider Boolean algebras, in particular the case $d=2$. If $C$ is the affine configuration corresponding to the $2$-dimensional Boolean algebra, then a set $H\subset [k]\times [l]$ avoids $C$ if and only if $H$ does not contain four distinct points $(a,b),(a',b),(a,b'),(a',b')$, which is equivalent to a cycle of length four in the appropriate bipartite graph. But then by the K\H{o}v\'ari-S\'os-Tur\'an theorem \cite{KST54}, we have  
$$|H|\leq kl^{1/2}+O(k+l),$$ so $\ex(k,2,C)\leq (1+o(1))k^{3/2}.$ But then by Theorem \ref{thm:grid} (with $d=2$ and $\alpha=1/2$), we get  $$b(n,2)\leq (1+o(1))\left(\frac{4}{\pi n}\right)^{1/4}2^{n}.$$ One can get an even better bound by slightly modifying the proof of Theorem \ref{thm:grid}: instead of choosing $n_{1}=\lfloor n/2\rfloor$ and $n_{2}=\lceil n/2\rceil$, set $n_{1}=\lfloor n^{2/3}\rfloor$ and $n_{2}=n-n_{1}$, and write $\ex(F,C)\leq |D_{1}||D_{2}|^{1/2}+O(|D_{1}|+|D_{2}|)$. Then, after repeating the same calculations, we get $$b(n,2)\leq (1+o(1))\left(\frac{2}{\pi n}\right)^{1/4}2^{n}\approx 0.89\cdot 2^{n}n^{-\frac{1}{4}}.$$
We omit the details.

\section{Concluding remarks}\label{sect:remarks}

Let $M=\binom{n}{\lfloor n/2\rfloor}$, and let $\sigma_{1}\geq\dots\geq\sigma_{M}$ be the sizes of the chains in a symmetric chain decomposition of $2^{[n]}$. Let $D_{1},\dots,D_{M}$ be a chain decomposition of $2^{[n]}$ such that $|D_{1}|\geq \dots\geq |D_{M}|$. Then it is easy to show that the sequence $\sigma_{1},\dots,\sigma_{M}$ \emph{dominates} $|D_{1}|,\dots,|D_{M}|$, that is, $$\sum_{i=1}^{k}\sigma_{i}\geq \sum_{i=1}^{k} |D_{i}|$$ for $k=1,\dots,M$. Griggs \cite{G88} proposed the following conjecture.
\begin{conjecture}\label{conj:griggs}
Let $s_{1}\geq \dots\geq s_{M}$ be a sequence of positive integers dominated by $\sigma_{1},\dots,\sigma_{M}$ such that $\sum_{i=1}^{M}s_{i}=2^{n}$. Then there exists a chain decomposition $D_{1},\dots,D_{M}$ of $2^{[n]}$ such that $|D_{i}|=s_{i}$.
\end{conjecture}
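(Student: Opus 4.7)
The plan is to attack Conjecture \ref{conj:griggs} by generalizing the construction that proves Theorem \ref{thm:mainthm}. First I would reduce, by the symmetry $x \mapsto x^c$ already used in Section \ref{sect:mainproof}, to the problem of partitioning the upper half $B = [n]^{(\geq m)}$ into $M$ chains of prescribed heights $\lceil s_i/2\rceil$. The dominance hypothesis $\sum_{i\leq k} s_i \leq \sum_{i\leq k} \sigma_i$ for all $k$ then translates into a Hall-type feasibility condition: for every $j$, the number of target chains whose topmost element must lie in $[n]^{(\geq m+j)}$ is at most $|[n]^{(\geq m+j)}|$, up to the usual lower-order terms. This matches exactly what the symmetric chain decomposition provides, so no target profile is ruled out from the start.

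Next I would aim at an asymptotic form of the conjecture, constructing a decomposition $D_1,\dots,D_M$ with $|D_i| = s_i(1\pm o(1))$ for all but an $o(1)$ proportion of $i$. The idea is to sort the target chains into ``bands'' according to their prescribed size, and for each band run a band-specific analogue of the filling-up construction of Section \ref{sect:mainproof}: trim the top levels $A_{k+1},\dots,A_{C_0}$ into random pieces of the \emph{non-uniform} sizes dictated by the target profile within each band, glue these pieces onto the appropriate core subposet of middle levels, and then invoke the container Lemma \ref{lemma:container} and the defect-matching Lemma \ref{lemma:matching} to extract the required chains. The dominance condition is precisely what guarantees that the level sizes of the resulting auxiliary posets remain non-negative throughout. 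The delicate Claim \ref{claim:calc}, asserting pairwise disjointness of the index sets $\phi(a,b)$, must be re-established in this more flexible setting; adapting its binomial-coefficient calculations to prescribed rather than uniform piece sizes is likely the most technical step of the asymptotic half of the argument.

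The main obstacle, and the reason this remains a conjecture, is the passage from an approximate to an exact decomposition. All of the present probabilistic machinery leaves an $o(M)$ fraction of ``bad'' chains and produces sizes only correct up to an additive error of order $s\cdot n^{-\Omega(1)}$, whereas Conjecture \ref{conj:griggs} demands $|D_i|=s_i$ exactly. A genuinely new ingredient seems to be needed here. One natural candidate is a local exchange step: given two already-constructed chains $D, D'$ whose sizes deviate from target by $+1$ and $-1$ respectively, find an element $x\in D$ and an element $y\in D'$ in consecutive levels with $x\subset y$, so that rerouting the chains at $(x,y)$ simultaneously corrects both deviations. Whether such exchanges are always available, and whether they suffice to reach every dominated profile, is itself a nontrivial structural question, best phrased as a flow problem on the comparability graph between adjacent levels, using machinery in the spirit of Claims \ref{claim:normalizedmatching}--\ref{claim:levels}. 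I expect this exchange analysis, rather than the initial probabilistic construction, to be where the real difficulty lies.
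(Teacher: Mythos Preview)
The statement is an open conjecture of Griggs; the paper does not prove it and presents it as unresolved. So there is no ``paper's own proof'' to compare against. What the paper does say, in the paragraph immediately following Conjecture~\ref{conj:B}, bears directly on your plan and exposes a gap in its first step.

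Your opening move is to reduce to the upper half $B=[n]^{(\geq m)}$ via the complementation $x\mapsto x^{c}$, targeting chain heights $\lceil s_i/2\rceil$ in $B$. This reduction works in the proof of Theorem~\ref{thm:mainthm} only because there \emph{every} target size is the same $s$: once you have a good partition of $B$, each upper-half chain $D_x$ is glued to the lower-half chain $\overline{D_{x^c}}$ (or $\overline{D_{\tau(x)^c}}$) through its foot $x\in A_0$, and since all chains have size $\approx k$ the result has size $\approx 2k\approx s$ regardless of how the pairing comes out. For a general profile $(s_i)$ the gluing is still forced by incidence at the middle level, and you have no control over which upper-half length gets paired with which lower-half length. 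The paper flags exactly this: ``we might be able to partition the lower and upper half of $2^{[n]}$ into chains of the desired lengths, but when we try to match the chains in the lower and upper half, we are unable to guarantee that the chains of right lengths are connected.'' So even the \emph{asymptotic} version of your plan hits an obstruction at the reduction step, before any container or matching machinery is invoked.

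Your later remarks about local exchanges and the exact-versus-approximate gap are reasonable in spirit, but they presuppose an approximate decomposition of the full lattice with the correct profile, and the reduction you propose does not deliver one. Addressing this would require controlling the pairing at the middle level simultaneously with the chain lengths on each side, a genuinely different problem from anything carried out in Section~\ref{sect:mainproof}.
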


Note that Conjecture \ref{conj:mainconj} is a special subcase of this conjecture, possibly the most
challenging one. One might consider a similar question for the upper half of $2^{[n]}$, that is, for the family $B=[n]^{(\geq n/2)}$. Then a conjecture akin to Conjecture \ref{conj:griggs} would be as follows. For $i=1,\dots,M$, let $\sigma_{i}'=\lceil \frac{\sigma_{i}}{2}\rceil$, then $\sigma_{1}',\dots,\sigma_{M}'$ are the sizes of the chains in a symmetric chain decomposition of $2^{[n]}$ restricted to $B$. 

\begin{conjecture}\label{conj:B}
Let $s_{1}\geq \dots\geq s_{M}$ be a sequence of positive integers dominated by $\sigma_{1}',\dots,\sigma_{M}'$ such that $\sum_{i=1}^{M}s_{i}=|B|$. Then there exists a chain decomposition $D_{1},\dots,D_{M}$ of $B$ such that $|D_{i}|=s_{i}$.
\end{conjecture}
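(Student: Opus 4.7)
The plan is to combine the cut-and-glue machinery developed in Section \ref{sect:mainproof} with a more refined matching argument that can track non-uniform size requirements. First, I would reformulate the conjecture via a \emph{level profile}. Any chain decomposition of $B$ into $M$ chains must cover every level $A_j$ completely and use exactly one element per level per chain, so it is entirely specified by telling, for each element $x \in B$, which chain it belongs to. Writing $n_{j,l}$ for the number of chains with bottom in $A_j$ and top in $A_l$, the target size sequence $(s_i)$ imposes, via majorization, a set of linear constraints on these counts; my first task would be to verify that the inequalities $\sum_{i=1}^{k}s_i \leq \sum_{i=1}^{k}\sigma_i'$ translate precisely into a feasible set of such profile constraints, exploiting the classical correspondence between majorization and the transportation polytope.

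Next, I would construct the chain decomposition from bottom to top. Starting with $M$ chain bases in $A_0$, at each level $A_j$ the majorization-derived profile tells me how many chains must terminate there and how many must continue upward. Which specific chains continue is decided by a matching in the bipartite comparability graph between consecutive levels; Claim \ref{claim:normalizedmatching} (the LYM-style normalized matching inequality) provides the Hall condition needed to realize such matchings with arbitrary prescribed size. When the target sequence requires chains that \emph{skip} levels --- a feature that distinguishes the general conjecture from Theorem \ref{thm:mainthm} --- I would adapt the random-cutting construction from Section \ref{sect:mainproof}: partition the upper levels into small pieces via a uniformly random ordering, and attach these pieces to the chain skeleton via an additional matching (analogous to Lemma \ref{lemma:matching}). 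Crucially, the container lemma (Lemma \ref{lemma:container}) would be used to bound the largest antichain inside each glued piece, guaranteeing that these attachment matchings exist.

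The main obstacle is verifying Hall-type conditions at every level \emph{simultaneously}. The majorization hypothesis is a global constraint on partial sums of the size sequence, whereas Hall's conditions are local statements about arbitrary subsets of each level. In Theorem \ref{thm:mainthm} this difficulty was sidestepped because the uniform target forced every chain to have length roughly $s/2$, so it sufficed to analyze a single strip of $\lceil s/2 \rceil$ consecutive levels and treat interactions between short and long chains as lower-order noise. In the general conjecture, chains of many different lengths must coexist, and the random cutting has to be performed at multiple scales concurrently. I expect the probabilistic/container analysis must be sharpened to track, level by level, the cumulative defect contributed by chains of each size class; in particular, Lemma \ref{lemma:antichain} and Lemma \ref{lemma:matching} must be extended to the non-uniform setting without losing control of the $o(1)$ error terms. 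Since the conjecture asks for an \emph{exact} realization of each $s_i$ rather than an approximate one, a purely probabilistic approach in the spirit of Theorem \ref{thm:mainthm} will likely have to be supplemented by a deterministic "rounding" step that exchanges elements between nearly-constructed chains to correct the remaining discrepancies --- essentially a controlled local-swap argument on the already-built skeleton.
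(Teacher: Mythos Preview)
The statement you are addressing is Conjecture~\ref{conj:B}, which the paper presents as an \emph{open problem} in its concluding remarks (Section~\ref{sect:remarks}). There is no proof in the paper to compare against; the authors write only that ``it is plausible that one can use a modification of our approach to prove an asymptotic version of this conjecture,'' and even that asymptotic version is not carried out. Your submission is accordingly not a proof competing with the paper's argument but a research outline for an open question, and you yourself flag the unresolved obstacles.

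As an outline it points at the right toolkit --- the random cutting, the container lemma (Lemma~\ref{lemma:container}), and defect matchings (Lemma~\ref{lemma:matching}) from Section~\ref{sect:mainproof} --- and you correctly isolate the main difficulty: the paper's method handles a single target size near $s/2$, whereas the conjecture demands simultaneous control over all size classes. A couple of points in your plan are loose, though. In any decomposition of $B$ into $M$ chains every chain contains exactly one element of $A_0$ (since $|A_0|=M$), so all chain bases lie in $A_0$ and your parameter $n_{j,l}$ with variable $j$ is redundant. More seriously, the majorization hypothesis does not convert into level-by-level Hall conditions via the transportation-polytope correspondence in any known way here: that correspondence yields feasibility of a fractional flow, not of an integral chain decomposition respecting the poset structure, and bridging that gap is exactly the content of the conjecture. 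Your proposed ``deterministic rounding'' step is where the real argument would have to live, and nothing in the paper supplies it; indeed, the authors explicitly note in the final paragraph of Section~\ref{sect:remarks} that even gluing upper- and lower-half decompositions together already runs into an unresolved matching problem.
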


It is plausible that one can use a modification of our approach to prove an asymptotic version of this conjecture.
I.e., there exists a chain decomposition $D_{1},\dots,D_{M}$ of $B$ such that for all but at most $o(M)$ indices $i\in [M]$, we have $|D_{i}|=(1+o(1))s_{i}$. However, such a result will not immediately yield an asymptotic version of Conjecture \ref{conj:griggs} for the following reason: we might be able to partition the lower and upper half of $2^{[n]}$ into chains of the desired lengths, but when we try to match the chains in the lower and upper half, we are unable to guarantee that the chains of right lengths are connected.

\begin{appendices}
\section{Appendix: the proof of Claim \ref{claim:binomial} and Claim~\ref{claim:calc}}

\begin{proof}[Proof of Claim \ref{claim:binomial}]
(4): For $0.1\sqrt{n}\leq l <\sqrt{n}$ this follows from (2). For $l<0.1\sqrt{n}$, we have the upper bound $$\binom{n}{m+l}=M\prod_{j=1}^{l}\frac{n-m-l+j}{m+j}\leq M\left(1-\frac{l}{n}\right)^{l}\leq Me^{-l^2/n}\leq M\left(1-\frac{l^{2}}{4n}\right).$$
  For the lower bound we proceed in a similar fashion:
  $$\binom{n}{m+l}=M\prod_{j=1}^{l}\frac{n-m-l+j}{m+j}\geq M\left(1-\frac{2l}{n}\right)^{l}\geq M\left(1-\frac{2l^{2}}{n}\right).$$
  
  (5): $$\binom{n}{m+l}-\binom{n}{m+l+1}=\binom{n}{m+l}\left( 1-\frac{n-(m+l)}{m+l+1}\right)=\binom{n}{m+l}\frac{2m-n+1+2l}{m+l+1}.$$
   Here, $\binom{n}{m+l}=\Theta(2^{n}n^{-1/2})$ by (1) and (2), and $\frac{2m-n+1+2l}{m+l+1}=\Theta(\frac{l}{n})$. Therefore,
   $$\binom{n}{m+l}-\binom{n}{m+l+1}=\Theta(l2^{n}n^{-3/2}).$$
   
   (6): Using part (2), we have that $$\frac{\binom{n}{m+l}}{\binom{n}{m+l+n/l}}=(1+o(1))\frac{e^{-2l^2/n}}{e^{-\frac{2}{n}\left(l^2+2n + \frac{n^2}{l^2}\right)}}=(1+o(1))e^{4+\frac{2n}{l^2}}\leq (1+o(1))e^6.$$
   Hence, 
   $$\sum_{i\geq m+l}\binom{n}{i}\geq \sum_{i=m+l}^{m+l+\frac{n}{l}}\binom{n}{i}\geq (e^{-6}+o(1))\binom{n}{m+l}\frac{n}{l}=(e^{-6}+o(1))\frac{Mn}{l}e^{-2l^2/n},$$
   and the result follows by applying (1) and using that $\sqrt{\frac{2}{\pi}}\geq \frac{1}{e}$.
\end{proof}

\begin{proof}[Proof of Claim \ref{claim:calc}]
We omit floors and ceiling for simplicity. Let $1\leq a \leq k$ and $a\leq b \leq k$. We shall prove that $\phi(a,b)$ and $\phi(a,b+1)$ are disjoint and Claim~\ref{claim:calc} will follow. Equivalently, we will show that the sum of the lengths of the two partial diagonals between $X_{a,b}$ and $X_{a,b+1}$ is longer than the corresponding layer. Suppose for contradiction that $(a,b)$ is the first element of $I$ such that there exists some $r\leq \left\lceil \sqrt{\frac{1}{3}\log n} \right\rceil$  with $r\sqrt{n}+1\in \phi(a,b)\cap \phi(a,b+1)$. 

Suppose first that $b-a \leq 0.01\sqrt{n}$. Recall that the length of the $j$-th diagonal is $M-|A_{k-j}|$. Then by Claim~\ref{claim:binomial}, (2), each of the first $b-a$ diagonals have size at least $(1+o(1)) M\left(1-e^{-(\sqrt{\pi/8} - 0.01)^2}\right)>0.51M$. On the other hand, for any $i\geq k$ we have $|A_i|\leq |A_k| = (1+o(1))M\left(1-e^{-\pi/4}\right)\leq 0.48M$. Hence, $\phi(a,b)\cap \phi(a,b+1)=\emptyset$ in this range. Next we will assume $ b-a\geq 0.01\sqrt{n}$ and so in particular we have $r\geq \frac{k}{\sqrt{n}}+ 0.01$.

We will use Claim~\ref{claim:binomial}, (2) to approximate the size of unions of layers  by integrals, which will give an approximation up to a $(1+o(1))$ factor. As all inequalities we wish to show in this proof are far from sharp, this error term will not cause any problems for us.
Set $k'=\frac{k}{\sqrt{n}}=(1+o(1))\sqrt{\frac{\pi}{8}}$, and let
\begin{equation}\label{eq:1}
S:=\sum_{i=k+1}^{r\sqrt{n}}|A_i|=(1+o(1))M\int_{k+1}^{r\sqrt{n}}e^{-2x^2/n}dx = (1+o(1))M\sqrt{n}\int_{k'}^re^{-2x^2}dx,
\end{equation}
that is, $S$ is the total number of elements in the levels $A_{k+1},\dots,A_{r\sqrt{n}}$. But $(a,b)$ is the first element of $I$ with $r\sqrt{n}+1\in \phi(a,b)$, so  we have $S_{0}\leq S\leq S_{1}$, where $S_{0}=\sum_{(a',b')\prec (a,b)}|X_{a',b'}|$ and $|S_{1}|=|S_{0}|+|X_{a,b}|$. In particular, $|S_{0}|(1+o(1))=|S|$. Next, we will calculate the size of $S_0$ by summing up the first $b-a-1$ diagonals and adding to it the piece of the $(b-a)$-th diagonal that comes before $X_{a,b}$ -- see Figure~\ref{figure:chains}. Let $t=b-a-1$ and $t'=\frac{t}{\sqrt{n}}$, then we have

    \begin{equation}\label{eq:2}
    \begin{split}
S_{0}&= \sum_{i\prec (a,b)} |X_i| = \left(M-|A_{a-1}| + \sum_{i=0}^{b-a-1}\left(M-|A_{k-i}|\right)\right)\\
&= (1+o(1))\left(t'\sqrt{n}M - M\sqrt{n}\int_{k'-t'}^{k'} e^{-2x^2}dx \right).
\end{split}
\end{equation}
By comparing~(\ref{eq:1}) and~(\ref{eq:2}), we get
$$(1+o(1))\int_{k'}^re^{-2x^2}dx = t' - \int_{k'-t'}^{k'} e^{-2x^2}dx,$$
which implies, that
\begin{equation}\label{eq:3}
t'=\int_{k'-t'}^{r}e^{-2x^{2}}dx + o(1).
\end{equation}
For this $t$ and $r$, we wish to show that the level $m+r\sqrt{n}$ is smaller than the $t$-th diagonal, that is,
\begin{equation}\label{eq:x}
M-\binom{n}{m+(k'-t')\sqrt{n}}>\binom{n}{m+r\sqrt{n}}.
\end{equation}
Using by Claim \ref{claim:binomial}, (2), the left hand side is equal to $(1+o(1))M\left(1-e^{-2(k'-t')^{2}}\right)$, and the right hand side is equal to $(1+o(1))Me^{-2r^{2}}$. Hence to establish~(\ref{eq:x}) for large $n$, it suffices to show that 
\begin{equation}\label{eq:des}
1-e^{-2(k'-t')^{2}}>1.01\cdot e^{-2r^{2}}
\end{equation}

By ~(\ref{eq:3}), we can view $t'$ as a function of $r$. Let $f:(\sqrt{\frac{\pi}{8}},\infty)\rightarrow \mathbb{R}^{+}$ be the function satisfying $$f(r)=\int_{\sqrt{\frac{\pi}{8}}-f(r)}^{r}e^{-2x^{2}}dx,$$ then $f$ is well defined, strictly increasing and continuous.  Note that $$\frac{df}{dr}=\frac{e^{-2r^2}}{1-e^{-2(\sqrt{\frac{\pi}{8}}-f(r))^2}},$$ and that $f$ and $\frac{df}{dr}$  are both absolutely continuous on the interval $[\sqrt{\frac{\pi}{8}},4]$, say. Using elementary methods,we will now show that we have $f(r)=(1+o(1))t'$ in the range of parameters where $t'\geq 0.01$ and $r\leq 4$. Indeed note that in this range there exists a positive constant $\delta>0$ such that for any $x$ in the domain of the integral, we have that $\delta < e^{-2x^2}<1-\delta$. Assume first that the $o(1)$ term in equation~(\ref{eq:3}) is positive, so that $t'=\int_{k'-t'}^{r}e^{-2x^{2}}dx + \epsilon$. Then letting $t'' := t' - \frac{4\epsilon}{\delta}$ we get that $t'' \leq \int_{k'-t''}^{r}e^{-2x^{2}}dx -\epsilon$ as the left hand side decreased by $\frac{4\epsilon}{\delta}$ and the right hand side decreased by at most $\frac{4\epsilon}{\delta}\left(1-\frac{\delta}{2}\right)$, so the cumulative drop was at least $2\epsilon$. By continuity, there is a value of $t^\ast$ between $t''$ and $t'$ satisfying $t^\ast = \int_{\sqrt{\frac{\pi}{8}}-t^\ast}^{r}e^{-2x^{2}}dx$. Hence in this case we have $t'-\frac{2\epsilon}{\delta}\leq f(r)\leq t'$ and indeed $t'-\frac{2\epsilon}{\delta}=t'(1-o(1))$ -- the other case where the $o(1)$ term in equation~(\ref{eq:3}) is negative is very similar.

We conclude that our desired inequality~(\ref{eq:des}) for $r\leq 4$ would be a consequence of the inequality $\frac{df}{dr}<0.98$ for all $r\in [\sqrt{\frac{\pi}{8}},4]$. Note that for $r=\sqrt{\frac{\pi}{8}}$ this holds as $2e^{-\pi/4}<0.98$. One way to prove that $\frac{df}{dr}<0.98$ for all $r$ is to show that the second derivative of $f$ with respect to $r$ is negative and hence $\frac{df}{dr}$ is decreasing. A shorter proof, which we will present here, uses numerical methods to verify that $\frac{df}{dr}<0.98$ for $r\leq 3.8$ and uses a different approach to handle the large $r$ case.

Let $L(r):=e^{-2r^2}$ and $R(r):=0.98\left(1-e^{-2(\sqrt{\frac{\pi}{8}}-f(r))^2}\right)$. Then we wish to show that $L(r)<R(r)$ for all $r\in [\sqrt{\frac{\pi}{8}},3.8]$. Observe that since $f(r)$ is strictly increasing, both $L(r)$ and $R(r)$ are decreasing functions of $r$. Our strategy will be to make use of the fact that the inequality $L(r)<R(r)$ is far from sharp in this range. We will find reals $\sqrt{\frac{\pi}{8}}=r_0<r_1<\ldots < r_s$ for some integer $s$, such that $L(r_{i})<R(r_{i+1})$ for all $i$, and $r_s>3.8$. This will then imply that $L(r)<R(r)$ for all $r\leq 3.8$. Indeed, for any $r\leq 3.8$ we find $i$ such that $r_i\leq r \leq r_{i+1}$ and then we have $L(r)\leq L(r_i)<R(r_{i+1})\leq R(r)$ and so $L(r)<R(r)$. A  list of such reals is given in the table below.
\begin{center}
\begin{tabular}{ c c c c}
 $i$ & $r_i$ & $L(r_i)$ & $R(r_i)$  \\ 
 0 & $\sqrt{\frac{\pi}{8}}$ & 0.4559 & ~ \\
 1 & 0.709375 & 0.3655 & 0.4653 \\
 2 & 0.809451 & 0.2697 & 0.3742 \\
 3 & 0.928680 & 0.1781 & 0.2771 \\
 4 & 1.069430 & 0.1015 & 0.1838 \\
 5 & 1.235140 & 0.0473 & 0.1052 \\
 6 & 1.430872 & 0.01666 & 0.04931 \\
 7 & 1.663845 & 0.003939 & 0.01747 \\
 8 & 1.943875 & 0.0005222 & 0.004161 \\
 9 & 2.283642 & $2.953\cdot 10^{-5}$ & $5.566\cdot 10^{-4}$ \\
10 & 2.698861 & $4.713\cdot 10^{-7}$ & $3.181\cdot 10^{-5}$ \\
11 & 3.208593 & $1.142\cdot 10^{-9}$ & $5.145\cdot 10^{-7}$ \\
12 & 3.835987 & ~ & $1.27\cdot 10^{-9}$ \\
\end{tabular}
\end{center}

It remains to handle the case where $r\geq 3.8$. Let $u:=k-(b-a)$.  Then by counting the elements in the levels above $A_{r\sqrt{n}}$ and using Claim~\ref{claim:binomial}, (6), we get that for $n$ large enough,
$$Q:=\sum_{i> r\sqrt{n}} |A_i|\geq \left(e^{-7}+o(1)\right)2^n e^{-2r^2}\frac{\sqrt{n}}{r\sqrt{n}} \geq 2^ne^{-2r^2}\frac{e^{-8}}{r}.$$

Recall that $(a,b)$ is the first element of $I$ such that $r\sqrt{n}+1\in \phi(a,b)$. Hence $Q$ is upper bounded by the total size of the last $u+1$ diagonals, i.e.~
$$Q\leq \sum_{i=0}^{u}\left(M - \binom{n}{m + i}\right)\leq \frac{2M}{n}\sum_{i=1}^{u}i^2\leq \frac{2u^3M}{n},$$
where the second inequality follows from Claim~\ref{claim:binomial}, (4). Putting the last two inequalities together, we get $\frac{2u^3M}{n}\geq 2^ne^{-2r^2}\frac{e^{-8}}{r}$. Using that $\frac{2^n}{M}=(\sqrt{\frac{\pi}{2}}+o(1))\sqrt{n}\approx 1.25\sqrt{n}$ and that $e^8\approx 2981$, and taking the $2/3$-th power of both sides, we have
\begin{equation}\label{eq:app1}
\frac{u^2}{n}\geq \frac{e^{-4r^2/3}}{300 r^{2/3}}.
\end{equation}
The size of the $(k-u)$-th diagonal is 
$$M-\binom{n}{m+u}\geq M\frac{u^2}{4n}.$$
The size of the $m+r\sqrt{n}+1$-th layer, i.e.~$A_{r\sqrt{n}+1}$, is
$$|A_{r\sqrt{n}+1}|=(1+o(1))Me^{-2r^2}.$$
If the $r\sqrt{n}+1$-th layer was indeed larger than the $(k-u)$-th diagonal then we would have, for large enough $n$, that
$5e^{-2r^2}\geq \frac{u^2}{n}.$ However,~(\ref{eq:app1}) implies that
$$\frac{u^2}{n}\geq \frac{e^{-4r^2/3}}{300r^{2/3}}\geq 5.1e^{-2r^2},$$
where the last inequality holds for $r\geq 3.6$. As $r\geq 3.8$, this finishes the proof.
\end{proof} 

\end{appendices}

\end{document}